\documentclass[10pt,3p, times,fullpage]{article}

\usepackage[mathscr]{eucal}
\usepackage{amsmath,amsxtra,amssymb,latexsym,amscd,amsthm,amsfonts,amstext,makeidx,upgreek,dsfont,eufrak,xcolor}
\usepackage[top= 1.9 cm,bottom=1.9 cm,left=2.5cm,right=2cm]{geometry}
\usepackage{indentfirst}
\usepackage{enumerate}

\usepackage{hyperref,url} 
\usepackage{graphicx}
\usepackage{subfloat,subfigure}
\usepackage{makeidx}

\usepackage{multirow}
\usepackage{hhline}
\usepackage{epstopdf}

\newcommand{\mb}[1] {\mathbb{#1}}
\newcommand{\mc}[1] {\mathcal{#1}}
\newcommand{\mf}[1] {\mathfrak{#1}}
\newcommand{\ms}[1] {\mathscr{#1}}
\newcommand{\mbf}[1] {\mathbf{#1}}

\newcommand{\nn} {\nonumber}

\newcommand{\al} {\alpha}

\newcommand{\be} {\beta}


\usepackage[utf8]{inputenc} 
\def\d{\mathrm{d}} 

\theoremstyle{definition}
\newtheorem{example}{Example}[section]
\theoremstyle{plain}
\newtheorem{lemma}{Lemma}[section]
\newtheorem{theorem}{Theorem}[section]

\newtheorem{remark}{Remark}[section]

\title{\bf Final value problem for nonlinear time fractional reaction-diffusion equation with discrete data}
\author{Nguyen Huy Tuan$^{1}$, Dumitru Baleanu$^{2,3,4}$, Tran Ngoc Thach$^{5}$, \\
		 Donal O’Regan$^{6}$ and Nguyen Huu Can$^{7,}$\footnote{Correspondence: nguyenhuucan@tdtu.edu.vn (Nguyen Huu Can). E-mail of other authors: thnguyen2683@gmail.com (Nguyen Huy Tuan), dumitru@cankaya.edu.tr (Dumitru Baleanu), ngocthachtnt@gmail.com  (Tran Ngoc Thach), donal.oregan@nuigalway.ie (Donal O’Regan).}\\\\
	\small $^{1}$ Institute of Research and Development, Duy Tan University, Da Nang 550000, Vietnam\\
\small $^2$ Department of Mathematics, Cankaya University, Ankara, Turkey\\
\small $^3$ Department of Medical Research, China Medical University Hospital, China Medical University, Taichung, Taiwan\\
\small $^4$ Institute of Space Sciences, Magurele–Bucharest, Romania\\
\small $^{5}$ Department of Mathematics and Computer Science, University of Science, Ho Chi Minh City, Vietnam,\\
\small Vietnam National University, Ho Chi Minh City, Vietnam\\
\small $^6$ School of Mathematics, Statistics and Applied
Mathematics, \\
	\small 	National University of Ireland, Galway, Ireland\\
	\small $^7$ Applied Analysis Research Group, Faculty of Mathematics and Statistics, \\
	\small
	Ton Duc Thang University, Ho Chi Minh City, Vietnam
}

\begin{document}
\maketitle
\begin{abstract}
In this paper, we study the problem of finding the solution of a multi-dimensional time fractional reaction-diffusion equation with nonlinear source from the final value data.  We prove that the present problem is not well-posed. Then regularized problems are constructed  using the truncated expansion method (in the case of two-dimensional) and the quasi-boundary value method (in the case of multi-dimensional). Finally, convergence rates of the regularized solutions are given and investigated numerically.\\
\textit{Keywords:} Backward problem; Fractional reaction-diffusion equation; Regularization method; Nonlinear source; Discrete data.\\
\textit{Subject Classification:} 35K99; 47J06; 47H10; 35K05.

\end{abstract}

\section{Introduction}
{\color{red} 
In recent times, fractional partial differential equations have received great attention both in analysis and application, which are used in modeling several phenomena in different areas of science such as physics, biology, chemistry, engineering and control theory, see \cite{Deb,Gor,Gup,Pod,Sam,jin, New1, New2, New3, New4}, so the fractional computation is increasingly attracted to mathematicians, where some circumstances integer-order partial differential equations cannot simulate. Accordingly, many definitions of fractional derivative are given \cite{Kil,Pod,ZouY}.
}
In this paper, we study a problem for the time fractional reaction-diffusion equation with nonlinear source
\begin{align} \label{pr} \begin{cases}   \dfrac{\partial}{\partial t} u(t,x) - \dfrac{\partial^{1-\al}}{\partial t}  \Delta  u(t,x) = f (t,x,u(t,x)), &(t,x)  \in (0,\mc{T})  \times  \Omega, \al \in (0,1) \\
u(t,x) = 0, &(t,x)  \in (0,\mc{T})  \times \partial \Omega, \\
u(\mc{T},x) = \varphi(x), &x \in \Omega,
\end{cases}
\end{align} 
where $\Omega = (0, \pi)^d$ is a subset of $\mb{R}^d$, $x=(x_1,x_2,\dots,x_d)$ is a $d$-dimensional variable. The source function $f$ is given and $\varphi$ is called the final value status. The notation  $\frac{\partial^{1-\al}}{\partial t}$ denotes the R-L fractional derivative
\begin{align*}
\dfrac{\partial^{1-\al}}{\partial t}    
v(t) = \dfrac{1}{\Gamma(\al)} \dfrac{d}{dt} \int\limits_{ 0 }^{ t } (t-s)^{\al-1} v(s) \d s, \quad t>0,
\end{align*} 
where $\Gamma(\cdot)$ is the Gamma function. It should be noted that if $\al =1$ then $\frac{\partial^{1-\al}}{\partial t} \Delta u$ becomes $\Delta u$ and the equation 
\begin{align}   \label{fde} 
\dfrac{\partial}{\partial t} u(t,x) - \dfrac{\partial^{1-\al}}{\partial t}  \Delta  u(t,x) = f (t,x,u(t,x)),
\end{align}  
reduces to the typical heat equation (note  \eqref{fde} comes from \cite{Sch}).  Schneider and Wyss \cite{Sch}  showed that the description of
diffusion in special types of porous media is an application of \eqref{fde} and that the fractional parameter $\alpha \in (0,1)$ can represent the “gray”
noise instead of the white noise in the case of $\alpha=1$.

Equation \eqref{fde} with the initial condition $u(0,x)=\psi(x)$  is known as the direct problem. For more details of this equation we refer  the reader  to \cite{And,Sch,Sek} and the references therein. Numerical solutions of the alternative representation of such direct problem have been studied in \cite{Chen,Cuesta,Mus,jin2}.  In contrast, the problem of recovering the function $u$ at previous time $t \in [0,\mc{T})$ as in \eqref{pr} is called the backward problem. This kind of equation arises in practical situations  in which the initial density of the diffusing substance is not available and we can only measure the density  at positive time. { We  mention the applications of backward in time diffusion equations  in  the work of A. S. Carraso \cite{Cas,Cas1}. 
Two  major current applications of the backward problem  are hydrologic inversion and image deblurring. 
Hydrologic inversion seeks to identify sources of groundwater pollution by backtracking contaminant
plumes (see \cite{At,At1}) and this involves solving the  diffusion backward in time,
given the contaminant spatial distribution  at the current time $\mathcal{T}$.   In image analysis, an effective setting for studying  2-D backward  diffusion   lies in the field of imaging
rehabilitation.     One can create imaginary fuzzy image data, using a certain sharp image as the initial value in the nonlinear diffusion equation 
studied and select the corresponding solution in a positive number
time T so successful backward continuation from $t = T $ to $t = 0,$ would restore the original sharp image. 
} Until now, there are some interesting papers on inverse problem of fractional diffsuion. We can list some well-known results, for example, J. Jia et al \cite{Jia},  J. Liu et al \cite{Liu}, some papers of M. Yamamoto and his group see \cite{Ya5,Ya6,Ya8,Ya9,Ya10},   B. Kaltenbacher et al \cite{Bar,Bar1,Bar8,Bar9},   W. Rundell et al \cite{Run,Run1},    J. Janno see \cite{Ja1,Ja2}, etc.
 However, to the best of our knowledge, there is no result concerning  the backward problem for \eqref{fde} with random noise.

Motivated by the  above, in this paper, we study  problem \eqref{pr} and aim to provide an approximate solution. In reality, it is impossible to get the exact final data $\varphi$ and we only have the noisy physical measurement $\widetilde{\varphi}$. A difficult point of the backward problem is  a small noise between $\widetilde{\varphi}$ and $\varphi$ can generate a very large error in the solution $u$. In other words, the solution does not depend continuously on the final value status (which makes \eqref{pr}  not well-posed). Therefore, we must provide some suitable methods to find an approximation for $u$. When  the final value status is measured on the whole space $\Omega$ many good methods can be applied to establish the approximate regularized solution such as the Tikhonov, the quasi-boundary value (QBV), the quasi-reversibility (QR) and the truncated expansion method (see \cite{Che, Den, Nam, Qia}). Here we consider a different situation in which only a finite number of data (instead of  data on the whole space) is available.  Precisely, we assume that the data $\varphi$ is measured at $n_1 \times n_2 \times \dots \times n_d$ grid points $x_{\mbf{k}} = x_{k_1,k_2,\dots,k_d} \in \Omega$,$d \ge 2$,  $\mbf{k}=(k_1,k_2,\dots,k_d) \in \mb{N}^d$,  as follows
\begin{align*}    
x_{\mbf{k}} = (\mc{X}_{k_1},\mc{X}_{k_2}, \dots, \mc{X}_{k_d}) = \left(  \dfrac{2 k_1 -1}{2 n_1} \pi,  \dfrac{2 k_2 -1}{2 n_2} \pi , \dots, \dfrac{2 k_d -1}{2 n_d} \pi \right),
\end{align*} 
where $k_i = 1,2,\dots,n_i$, $i = 1,2,\dots,d$. Furthermore, the value of $\varphi$ at each  point $x_{\mbf{k} }$ is contaminated by the observation $\Phi^{obs}_{\mbf{k} }$ 
\begin{align*}    
\varphi(x_{\mbf{k}})=\varphi\left( \mc{X}_{k_1},\mc{X}_{k_2},\dots,\mc{X}_{k_d} \right) ~ \approx ~  \Phi^{obs}_{k_1,k_2,\dots,k_d} = \Phi^{obs}_{\mbf{k}}.
\end{align*}  
The relationship between the two kinds of data is described by the random model
\begin{align}    \label{model}    \Phi^{obs}_{\mbf{k}} = \varphi(x_{\mbf{k}}) + \varepsilon_{\mbf{k}} \ms{W}_{\mbf{k}},
\end{align}   
where $\ms{W}_{\mbf{k}}=\ms{W}_{k_1,k_2,\dots,k_d}$ are mutually independent random variables, $\ms{W}_{\mbf{k}}\sim \mc{N}(0,1)$ and $\varepsilon_{\mbf{k}}=\varepsilon_{k_1,k_2,\dots,k_d}$ are positive constants bounded by a positive constant $\varepsilon_{\max}$. Some inverse problems when $d=1$ were studied in  \cite{Bis,Nan, Yang}.

 Our main contributions in this paper is as follows:
\begin{itemize}
	\item For the two dimensional case, i.e, $d=2$, we apply the Fourier truncation method  introduced in \cite{Minh} to give a regularized problem.  {
	The model in \cite{Minh} is linear. Our problem is nonlinear and we  use the Banach fixed point theorem to show  the existence of the regularized solution in the space $ \mb{X}_\mc{T}$ (note this space does not appear in \cite{Minh}).  Some new estimates of Mittag-Leffler type are  used. 
}

	\item For the multidimensional case with $d >2$, we apply the quasi-boundary value method (QBV).  We
	 emphasize that our random model here is a multidimensional case which is a generalization of the results in \cite{Bis,Nan, Yang}. Our method in this case  is new and very different from the methods in \cite{Minh}. First, we approximate H and $\varphi$ by the approximating functions $\widehat{\varphi}^{\gamma_{\mbf{n}} }$ 
	 defined in Theorem 5.1. Then, we use the approximation data to establish a regularized solution using the QBV method.  {Moroever, we also give a new filter method which contains some results on the truncation and quasi-boundary value method (this filter is a new contribution). }  In particular  in our error estimates we show that the
	norm of the difference between the regularized solution  and the solution
	of the problem \eqref{pr} tends to zero when $\sqrt{n_1^2+...+n_d^2} \to +\infty$. 
\end{itemize}

The structure of this paper is as follows. We first give some preliminaries which are needed for this paper in Section \ref{ss2}. In Section \ref{ss3}, we establish an integral formulation for the solution of  problem \eqref{pr}. In Section \ref{sec3}, we prove that the present problem is not well-posed and then we construct an approximate regularized solution for the $2$-dimensional problem using the Fourier truncated method. The convergence result is also given there. In Section \ref{s5}, the multi-dimensional problem is considered and regularized  using the quasi-boundary value method. We  estimate the error between the approximation and the sought solution in two different spaces. Finally, we
provide some numerical results to illustrate the convergence rates.
\section{Preliminaries} \label{ss2}
Before going to the main parts, we present some concepts:
\begin{itemize}
	\item For $\mbf{j}=(j_1,j_2,\dots,j_d) \in \mb{N}^d$, we denote
	$   
	|\mbf{j}|=\sqrt{\sum_{ i=1 }^{ d } j_i^2 }.
	$
	It is well-known that the following problem
	\begin{align*}
	\begin{cases}         
	-\Delta  {\xi_{\mbf{j}}(x)} = \lambda_{\mbf{j}} \xi_{\mbf{j}}(x), & x \in \Omega, \\
	\xi_{\mbf{j}}(x) = 0, & x \in \partial \Omega.
	\end{cases}     
	\end{align*}   
	admits eigenvalues $\left\{ \lambda_{\mbf{j}} \right\} $ and  eigenvectors $\left\{ \xi_{\mbf{j}} \right\} $ as follows
	\begin{align*}  
	\lambda_{\mbf{j}} = \lambda_{j_1,\dots,j_d}  =\sum_{ i=1 }^{ d } j_i^2 =|\mbf{j}|^2, \quad \xi_{\mbf{j}}(x)= \xi_{j_1,\dots,j_d}(x) = \left( \sqrt{\frac{2}{\pi}} \right) ^{d} \prod_{i=1}^{d} \sin (j_i x_i).
	\end{align*}  
	\item For $\al >0$ and  {$\be >0$}, the  function defined as follows
	\begin{align} \label{Mit}
	  E_{\al,\be} (z)= \sum_{ i=0 }^{ \infty } \dfrac{z^i}{\Gamma(\al i + \be)},  \quad z \in \mb{C}.
	\end{align} 
	is called the Mittag-Leffler function. Some properties of this function can be found in \cite{Pod}.
\item We introduce the subspace of $L^2(\Omega)$ 
\begin{align*}    
H^\theta (\Omega) = \left\{ g \in L^2(\Omega):  \sum_{ \mbf{j} \in \mb{N}^d }  \lambda_{\mbf{j} }  ^\theta \left\langle g,\xi_{\mbf{j} } \right\rangle ^2   < \infty \right\}, \quad  \theta >0,
\end{align*}  
with the norm
$
\left\| g \right\|_{H^\theta(\Omega)} = \left( \sum_{ \mbf{j} \in \mb{N}^d }  \lambda_{\mbf{j} }  ^\theta \left\langle g,\xi_{\mbf{j} } \right\rangle ^2 \right)^{1/2},
$
in which $\left\langle \cdot,\cdot \right\rangle $ denotes the inner product in $L^2(\Omega)$. 
\item For an arbitrary Banach space $\mbf{B}$, we set
\begin{align*}    
L^\infty \left( {0,\mc{T};\mbf{B}} \right) = \left\{ h: (0,\mc{T}) \to \mbf{B} \text{ measurable } \text{ s.t. } \text{esssup}_{t \in (0,\mc{T})} \left\| h(t,\cdot) \right\|_{\mbf{B}} < \infty \right\}. 
\end{align*}
\item    
We denote by $\mb{X}_\mc{T}$ (see \cite{Bae}), the space of all $L^2$-valued predictable processes $w$ such that
\begin{align*}  
\left\| w \right\|_{\mb{X}_{\mc{T}}} = \sup_{t \in [0,\mc{T}]} \sqrt{\mb{E} \left\| w(t,\cdot) \right\|^2_{L^2(\Omega)} } < \infty.
\end{align*}
For $\sigma>0$, we  denote by $\mb{S}_{\sigma,\mc{T}}$, the space of all $H^\sigma$-valued predictable processes $w$ such that
\begin{align*}  
\left\| w \right\|_{\mb{S}_{\sigma,\mc{T}}} = \sup_{t \in [0,\mc{T}]} \sqrt{\mb{E} \left\| w(t,\cdot) \right\|^2_{H^\sigma(\Omega)} } < \infty.
\end{align*}
\end{itemize}
\section{The solution of the backward problem} \label{ss3}
Let
$u(t,x)= \sum_{\mbf{j} \in \mb{N}^d}  u_\mbf{j}(t)  \xi_\mbf{j}(x)$ be the Fourier   series of $u$ in  $L^2(\Omega)$, where $ u_{\mbf{j}}(t)  :=\left\langle u(t,\cdot), \xi_{\mbf{j}}  \right\rangle $ are called the Fourier coefficients of $u$. Similarly, we denote $h_{\mbf{j}} := \left\langle h, \xi_{\mbf{j}}  \right\rangle  $ and $f_{\mbf{j}}(u)(t):=\left\langle f(t,\cdot,u(t,\cdot)),\xi_{\mbf{j}} \right\rangle $.  
Next, we find a representation for the solution $u$ of  problem \eqref{pr}. By taking
the inner product on both sides of the first equation in \eqref{pr} with  {$\xi_{\mbf{j}}$}, one has
\begin{align*}    
\frac{\partial}{\partial t} u_{\mbf{j}}(t)  {+} \lambda_{\mbf{j}} \frac{\partial ^{1-\al}}{\partial t} u_{\mbf{j}}(t) = f_{\mbf{j}}(u)(t).
\end{align*} 
Using the Laplace transform and  $u_{\mbf{j}}(\mc{T})=\varphi_{\mbf{j}}$, one obtains
\begin{align*}    
\widehat  {u_{\mbf{j}}}(s) = \dfrac{s^{\al-1}}{s^\al + \lambda_{\mbf{j}}} u_{\mbf{j}}(0) + \dfrac{s^{\al-1}}{s^\al + \lambda_{\mbf{j}}} \widehat  { f_{\mbf{j}}(u) } (s),
\end{align*} 
where the notation $\widehat {g}$ is the Laplace transform of $g$. In order to solve the equation above, we need some following properties of the Mittag-Leffler function (see \cite{Pod})
\begin{align*}    
&\dfrac{\partial}{\partial t}   
\left( E_{\al,1}(-\lambda t^\al) \right) = - \lambda t^{\al-1} E_{\al,\al} (-\lambda t^\al), \quad \dfrac{\partial}{\partial t} \left( t^{\al -1} E_{\al,\al}(-\lambda t^\al) \right) = t^{\al-2} E_{\al,\al-1} (-\lambda t^\al),
\end{align*}  
and
\begin{align*}    
\displaystyle \int\limits_{ 0 }^{ \infty } e^{-s t} E_{\al,1} (-\lambda t^\al) \d t = \dfrac{s^{\al-1}}{s^\al +\lambda}, \quad \text{ for Re} (s) > \lambda^{1/\al},
\end{align*}
in which $\lambda$ is a positive constant. In this way, one gets
\begin{align}  \label{ummm} 
u_{\mbf{j}} (t) = E_{\al,1} \big(- \lambda_{\mbf{j}} t^\al \big) u_{\mbf{j}}(0) + \int\limits_{ 0 }^{ t } E_{\al,1} \big( -\lambda_{\mbf{j}} (t-s)^\al \big)  f_{\mbf{j}}(u)(s) \d s.
\end{align}
It follows that
\begin{align*}   
u_{\mbf{j}} (\mc{T}) = E_{\al,1} \big(- \lambda_{\mbf{j}} \mc{T}^\al \big) u_{\mbf{j}}(0) + \int\limits_{ 0 }^{ \mc{T} } E_{\al,1} \big( -\lambda_{\mbf{j}} (\mc{T}-s)^\al \big)  f_{\mbf{j}}(u)(s) \d s.
\end{align*}  
Using the latter equation to determine  $u_{\mbf{j}}(0)$ based on $u_{\mbf{j}}(\mc{T})$ and then substituting this quantity into \eqref{ummm}, the Fourier coefficients are obtained
\begin{align}  \label{ucoe}
u_{\mbf{j}}(t) 
&=  \dfrac{E_{\al,1} \big(- \lambda_{\mbf{j}} t^\al \big)}{E_{\al,1} \big(- \lambda_{\mbf{j}} \mc{T}^\al \big)} \varphi_{\mbf{j}} 
+ \int\limits_{ 0 }^{ t } E_{\al,1} \big( -\lambda_{\mbf{j}} (t-s)^\al \big) f_{\mbf{j}}(u)(s) \d s \nn
\\
&-  \int\limits_{ 0 }^{ \mc{T} } \dfrac{E_{\al,1} \big(- \lambda_{\mbf{j}} t^\al \big) E_{\al,1} \big( -\lambda_{\mbf{j}}(\mc{T}-s)^\al \big)}{E_{\al,1} \big(- \lambda_{\mbf{j}} \mc{T}^\al \big)} f_{\mbf{j}}(u)(s) \d s.
\end{align}
For convenience, we define the operators
\begin{align}   
\mc{A}(t) {g} &:= \sum_{ \mbf{j}  \in \mbf{N}^d } \Bigg( \dfrac{E_{\al,1} \big(- \lambda_{\mbf{j} } t^\al \big)}{E_{\al,1} \big(- \lambda_{\mbf{j}} \mathcal{T}^\al \big)} \left\langle {g}, \xi_{\mbf{j}} \right\rangle  \Bigg) \xi_{\mbf{j}}, \label{defa} \\
\mc{B}(t) {g} &:= \sum_{ \mbf{j}  \in \mbf{N}^d } \Big(  E_{\al,1} \big( -\lambda_{\mbf{j} } t^\al \big) \left\langle {g}, \xi_{\mbf{j} } \right\rangle \Big) \xi_{\mbf{j} } \label{defb} ,
\end{align}
and $\mc{D}(t,s){g}:=\mc{A}(t) \mc{B}(s){g}$, for ${g} \in L^2(\Omega)$, $t,s \in [0,\mc{T}]$. Now, we conclude that $u$ satisfies the equation
\begin{align}  \label{opereq}       u(t,x) = \mc{A}(t) \varphi(x) + \int\limits_{ 0 }^{ t } \mc{B}(t-s) f(s,x,u(s,x)) \d s -  \int\limits_{ 0 }^{ \mc{T} } \mc{D}(t,\mc{T}-s)  f(s,x,u(s,x)) \d s.
\end{align}

The following lemma (see \cite{Pod}) is useful for estimating the Mittag-Leffler function:
\begin{lemma}  \label{lm11}
	Let $0 < \al <1$ and the function $E_{\al,\al}$ defined in \eqref{Mit}. 
	For a real number $z >0$, we have
	\begin{align*}    
	\dfrac{\ms{M}_{1}}{1+z} \le E_{\al,\al}(-z) \le \dfrac{\ms{M}_{2}}{1+z},
	\end{align*}  
	in which $\ms{M}_1=\ms{M}_1(\al)$ and $\ms{M}_2=\ms{M}_2(\al)$ are positive constants depending only on $\al$.
\end{lemma}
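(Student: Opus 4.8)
The plan is to establish the two-sided estimate by controlling the auxiliary product $(1+z)E_{\al,\al}(-z)$ from above and below by strictly positive constants; since $\frac{1}{1+z}$ is itself positive, continuous, comparable to the constant $1$ on compacta, and decays like $z^{-1}$ at infinity, reducing to this product is the natural normalisation, and $\ms{M}_1,\ms{M}_2$ will emerge as the infimum and supremum of $(1+z)E_{\al,\al}(-z)$ over $[0,\infty)$.

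First I would secure positivity and continuity globally. Since $t\mapsto E_{\al,\al}(-t)$ is completely monotone for $0<\al\le 1$, Bernstein's theorem furnishes a representation $E_{\al,\al}(-z)=\int_0^\infty e^{-zr}\,\d\mu_\al(r)$ with $\mu_\al$ a nonnegative measure; this simultaneously gives $E_{\al,\al}(-z)>0$, continuity, and a clean handle on the tail that sidesteps the sign bookkeeping of the asymptotic series. One may alternatively extract positivity for small $z$ from the alternating structure of \eqref{Mit}, but the representation is more robust.

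Next I would analyse the two endpoints. As $z\to 0^+$ the series \eqref{Mit} gives $E_{\al,\al}(-z)\to 1/\Gamma(\al)>0$, so on a fixed interval $[0,z_0]$ the continuous positive function $(1+z)E_{\al,\al}(-z)$ is trapped between two positive constants by the extreme value theorem; the same compactness argument handles any finite window $[z_0,z_1]$. As $z\to+\infty$ I would insert the standard asymptotic expansion $E_{\al,\al}(-z)=-\sum_{k=1}^{N}\frac{z^{-k}}{\Gamma(\al-\al k)}+O(z^{-N-1})$, identify the leading nonvanishing coefficient, and read off the exact decay rate, then multiply by $(1+z)$ to determine the limiting behaviour of the product. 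Assembling the three regimes yields finite positive $\ms{M}_1$ and $\ms{M}_2$.

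The step I expect to be the main obstacle is exactly this tail analysis. Because $\be=\al$, the would-be leading coefficient $1/\Gamma(\al-\al)=1/\Gamma(0)$ vanishes, so the dominant contribution is pushed to the next term of the expansion, and the delicate point is to match the resulting rate against $z^{-1}$ and to verify that the lower constant $\ms{M}_1$ can genuinely be taken strictly positive. I would therefore concentrate the effort on making the $z\to+\infty$ estimate quantitative — either via the behaviour of the density $\mu_\al$ near $r=0$ through a Watson's-lemma argument, or via a rigorous remainder bound for the asymptotic series — and only then collect the global constants.
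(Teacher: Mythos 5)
The paper gives no proof of this lemma at all---it simply points to Podlubny's book---so there is no internal argument to compare yours against; your framework (positivity and continuity via complete monotonicity, compactness on finite windows, asymptotics at infinity) is the standard and correct one for bounds of this type. The difficulty is that the step you yourself single out as the main obstacle is not merely delicate: it is fatal to the lower bound as the lemma is literally stated. Because $\be=\al$ annihilates the $k=1$ coefficient, $1/\Gamma(\al-\al)=1/\Gamma(0)=0$, the asymptotic expansion gives $E_{\al,\al}(-z)=-z^{-2}/\Gamma(-\al)+O(z^{-3})=\tfrac{\al}{\Gamma(1-\al)}\,z^{-2}+O(z^{-3})$ as $z\to+\infty$. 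Hence $(1+z)E_{\al,\al}(-z)\to 0$, the infimum of that product over $[0,\infty)$ is zero, and no strictly positive $\ms{M}_1$ with $\ms{M}_1/(1+z)\le E_{\al,\al}(-z)$ can exist. The correct two-sided estimate for $E_{\al,\al}(-z)$ has $(1+z)^2$ in the denominators. So your plan, carried out faithfully, would end by disproving the stated lower bound rather than proving it.

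What rescues the situation is that the statement is evidently a misprint for $E_{\al,1}$: every subsequent use of the lemma (Lemma 2.2, the operators $\mc{A},\mc{B},\mc{D}$, the ill-posedness example, the truncation estimates) applies the two-sided bound to $E_{\al,1}(-\lambda_{\mbf{j}}t^{\al})$, never to $E_{\al,\al}$. For $\be=1$ the leading coefficient $1/\Gamma(1-\al)$ is nonzero and positive, so $E_{\al,1}(-z)\sim z^{-1}/\Gamma(1-\al)$, the product $(1+z)E_{\al,1}(-z)$ has a strictly positive limit at infinity, and your three-regime argument closes: complete monotonicity gives global positivity, the extreme value theorem handles compact sets, and a quantitative remainder bound for the expansion (e.g.\ Podlubny, Theorem 1.6) controls the tail uniformly rather than only in the limit. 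You should therefore either restate the lemma for $E_{\al,1}$ and finish along the lines you describe, or keep $E_{\al,\al}$ and weaken the lower bound to $\ms{M}_1/(1+z)^2$; as written, the proof cannot be completed because the lower half of the claim is false.
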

Using the latter lemma, we give some properties for the operators $\mc{B}, \mc{D}$ as follows:
\begin{lemma} \label{lmop}
	Let $g$ be a function in $L^2(\Omega)$. For $t \in [0,\mc{T}]$,  $s \in (0,\mc{T}]$, we have
	\begin{align*}    
	\left\| \mc{B}(t) g \right\|_{L^2(\Omega)}   \le \ms{M}_2 \left\| g \right\|_{L^2(\Omega)} ,\left\| \mc{D}(t,s) g  \right\|_{L^2(\Omega)}  \le  \dfrac{\ms{M}^2_2 \mc{T}^\al}{\ms{M}_1 s^\al} \left\| g  \right\|_{L^2(\Omega)} .
	\end{align*}     
\end{lemma}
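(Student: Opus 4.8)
The plan is to exploit the fact that both $\mc{B}(t)$ and $\mc{D}(t,s)$ act as Fourier multipliers against the orthonormal basis $\{\xi_{\mbf{j}}\}$, so that each $L^2$-bound reduces to a uniform pointwise estimate on the corresponding scalar multiplier. Concretely, from \eqref{defb} and $\mc{D}(t,s)=\mc{A}(t)\mc{B}(s)$ together with \eqref{defa} one computes
\begin{align*}
\mc{B}(t)g = \sum_{\mbf{j}\in\mb{N}^d} E_{\al,1}\big(-\lambda_{\mbf{j}}t^\al\big)\langle g,\xi_{\mbf{j}}\rangle\xi_{\mbf{j}},\qquad
\mc{D}(t,s)g = \sum_{\mbf{j}\in\mb{N}^d} \frac{E_{\al,1}\big(-\lambda_{\mbf{j}}t^\al\big)E_{\al,1}\big(-\lambda_{\mbf{j}}s^\al\big)}{E_{\al,1}\big(-\lambda_{\mbf{j}}\mc{T}^\al\big)}\langle g,\xi_{\mbf{j}}\rangle\xi_{\mbf{j}}.
\end{align*}
By Parseval's identity it then suffices to bound each multiplier uniformly in $\mbf{j}$, after which $\|\mc{B}(t)g\|^2_{L^2(\Omega)}$ and $\|\mc{D}(t,s)g\|^2_{L^2(\Omega)}$ are controlled by the squared multiplier bound times $\sum_{\mbf{j}}\langle g,\xi_{\mbf{j}}\rangle^2=\|g\|^2_{L^2(\Omega)}$. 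The essential analytic input is the two-sided estimate $\frac{\ms{M}_1}{1+z}\le E_{\al,1}(-z)\le\frac{\ms{M}_2}{1+z}$ for $z>0$, the $E_{\al,1}$-analogue of Lemma \ref{lm11} (the same classical estimate holds for $E_{\al,\be}$ with $0<\al<1$, $\be>0$).

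For $\mc{B}$, the multiplier $E_{\al,1}(-\lambda_{\mbf{j}}t^\al)$ is handled immediately: since $\lambda_{\mbf{j}}t^\al\ge0$ and $E_{\al,1}(-\cdot)$ is decreasing with $E_{\al,1}(0)=1$, the upper estimate gives $0<E_{\al,1}(-\lambda_{\mbf{j}}t^\al)\le\ms{M}_2$ for every $\mbf{j}$ and every $t\in[0,\mc{T}]$ (note the upper bound forces $\ms{M}_2\ge1$, covering the endpoint $t=0$). Squaring and summing yields $\|\mc{B}(t)g\|_{L^2(\Omega)}\le\ms{M}_2\|g\|_{L^2(\Omega)}$.

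The bulk of the work is the $\mc{D}$-multiplier, where the division by $E_{\al,1}(-\lambda_{\mbf{j}}\mc{T}^\al)$ — the very term responsible for the ill-posedness — must be controlled. I would split the factors as
\begin{align*}
\frac{E_{\al,1}(-\lambda_{\mbf{j}}t^\al)E_{\al,1}(-\lambda_{\mbf{j}}s^\al)}{E_{\al,1}(-\lambda_{\mbf{j}}\mc{T}^\al)}
= E_{\al,1}(-\lambda_{\mbf{j}}t^\al)\cdot\frac{E_{\al,1}(-\lambda_{\mbf{j}}s^\al)}{E_{\al,1}(-\lambda_{\mbf{j}}\mc{T}^\al)},
\end{align*}
bound the first factor by $\ms{M}_2$ as above, and then apply the upper estimate to the numerator and the lower estimate to the denominator of the ratio, giving $\frac{\ms{M}_2}{\ms{M}_1}\cdot\frac{1+\lambda_{\mbf{j}}\mc{T}^\al}{1+\lambda_{\mbf{j}}s^\al}$. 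The decisive elementary inequality is then
\begin{align*}
\frac{1+\lambda_{\mbf{j}}\mc{T}^\al}{1+\lambda_{\mbf{j}}s^\al}\le\frac{\mc{T}^\al}{s^\al},\qquad 0<s\le\mc{T},
\end{align*}
which follows by clearing denominators and reducing to $s^\al\le\mc{T}^\al$. Multiplying the three bounds gives the uniform multiplier estimate $\frac{\ms{M}_2^2\mc{T}^\al}{\ms{M}_1 s^\al}$, and Parseval finishes the proof.

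The main obstacle is precisely this last step: one must extract the singular factor $\mc{T}^\al/s^\al$ from a ratio of Mittag-Leffler functions whose denominator decays like $(1+\lambda_{\mbf{j}}\mc{T}^\al)^{-1}$ as $\lambda_{\mbf{j}}\to\infty$. It is the two-sided (not merely one-sided) control of Lemma \ref{lm11} that renders this near-cancellation quantitative and, crucially, uniform in $\mbf{j}$; a purely upper or lower estimate would not suffice.
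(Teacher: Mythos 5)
Your proof is correct and follows essentially the same route as the paper: Parseval reduces each bound to a uniform multiplier estimate, the two-sided Mittag-Leffler bounds give $\ms{M}_2$ for $\mc{B}$ and $\frac{\ms{M}_2^2(1+\lambda_{\mbf{j}}\mc{T}^\al)}{\ms{M}_1(1+\lambda_{\mbf{j}}s^\al)}$ for $\mc{D}$, and the elementary inequality $\frac{1+\lambda_{\mbf{j}}\mc{T}^\al}{1+\lambda_{\mbf{j}}s^\al}\le\frac{\mc{T}^\al}{s^\al}$ finishes. Your remark that Lemma \ref{lm11} is stated for $E_{\al,\al}$ while the operators involve $E_{\al,1}$, so the analogous two-sided estimate for $E_{\al,1}$ is what is actually needed, is a point the paper glosses over.
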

\begin{proof}
	Using Lemma \ref{lm11}, we have
	\begin{align*}    
	\left\| \mc{B}(t) g \right\|_{L^2(\Omega)}^2  = \sum_{ \mbf{j} \in \mb{N}^d } E^2_{\al,1} \big(- \lambda_{\mbf{j}} t^\al \big)  \left\langle g, \xi_{\mbf{j}} \right\rangle^2 \le  \ms{M}_2^2  \left\| g \right\|_{L^2(\Omega)}^2 . 
	\end{align*} 
	For the second term, we  have
	\begin{align*}    
	\left\| \mc{D}(t,s) g  \right\|_{L^2(\Omega)}^2  &= \sum_{ \mbf{j} \in \mb{N}^d } \Bigg( \dfrac{E_{\al,1} \big(- \lambda_{\mbf{j}} t^\al \big) E_{\al,1} \big( -\lambda_{\mbf{j}}s^\al \big)}{E_{\al,1} \big(- \lambda_{\mbf{j}} \mc{T}^\al \big)}\Bigg) ^2 \left\langle g, \xi_{\mbf{j}} \right\rangle^2 \\
	&\le \sum_{ \mbf{j} \in \mb{N}^d } \left( \dfrac{\ms{M}^2_2 (1+\lambda_{\mbf{j}} \mc{T}^\al)}{\ms{M}_1 (1+\lambda_{\mbf{j}} s^\al)} \right)^2   \left\langle g, \xi_{\mbf{j}} \right\rangle^2  \le \left( \dfrac{\ms{M}^2_2 \mc{T}^\al}{\ms{M}_1 s^\al} \right)^2   \left\| g  \right\|_{L^2(\Omega)}^2 . 
	\end{align*}
	This completes the proof.   
\end{proof}
\section{Backward problem in the case of two-dimensional}
 \label{sec3}
In this section, we study problem \eqref{pr} when $d=2$. Here, we recall that $\Omega = (0, \pi) \times (0, \pi)$, $x=(x_1,x_2)$ is a $2$-dimensional variable. The value of $\varphi$ at $n_1 \times n_2$  grid points $$x_{k_1,k_2} = (\mc{X}_{k_1},\mc{X}_{k_2}) =\left(  \dfrac{2 k_1 -1}{2 n_1} \pi,  \dfrac{2 k_2 -1}{2 n_2} \pi \right), \quad k_1 = 1,2,\dots, n_1,~k_2 = 1,2,\dots, n_2,$$
are contaminated by the observed data $\Phi^{obs}_{k_1,k_2}$ as in the model
\begin{align}        \Phi^{obs}_{k_1,k_2} = \varphi(x_{k_1,k_2}) + \varepsilon_{k_1,k_2} \ms{W}_{k_1,k_2},
\end{align}    
where $\ms{W}_{k_1,k_2}$ are mutually independent random variables, $\ms{W}_{k_1,k_2} \sim \mc{N}(0,1)$ and $0 < \varepsilon_{k_1,k_2} \le \varepsilon_{\max}$. 

Our main aim is to show that the  problem is not well-posed and then to construct an approximate regularized solution using the Fourier truncation method.  
\subsection{Estimator for the final value data $\varphi$} \label{ss31}
It should be noted that the final data $\varphi$ on the whole space  $\Omega$ is not available. Therefore, we now establish an approximate function for $\varphi$ based on the observations and then estimate the error between them.
\begin{lemma} \label{appp}
Let $N_1=N_1(n_1)$, $N_2=N_2(n_2)$ be natural numbers less than $n_1,n_2$ respectively. Assume that there exists a constant $\theta >2$ such that $\varphi \in H^{\theta}(\Omega)$. Approximate $\varphi$ by the following function
\begin{align} \label{phitilde2d}   
\widetilde \varphi^{N_1,N_2}:=  \sum_{ j_1=1 }^{ N_1 } \sum_{ j_2=1 }^{ N_2 } \Bigg( \frac{\pi^2}{n_1n_2} \sum_{ k_1=1 }^{ n_1 } \sum_{ k_2=1 }^{ n_2}  \Phi_{k_1,k_2}^{obs}  \xi_{j_1,j_2}(x_{k_1,k_2}) \Bigg) \xi_{j_1,j_2}. 
\end{align} 
Then, we have
\begin{align*}    
\mb{E} \left\| \widetilde \varphi ^{N_1,N_2} - \varphi \right\|^2_{L^2(\Omega)}  \le \dfrac{\left( 2 \pi^2 \varepsilon^2_{\max} +  2 \mc{C}_0^2  \left\| \varphi \right\|^2_{H^\theta (\Omega)} \right) N_1 N_2 }{n_1 n_2} + 2 \left[ (N_1+1)^{-2 \theta} + (N_2+1)^{-2 \theta} \right] \left\| \varphi \right\|^2_{H^\theta(\Omega)},
\end{align*} 
where $\mc{C}_0$ is a positive constant (see \eqref{C0}).
\end{lemma}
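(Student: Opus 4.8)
The plan is to split the error into a deterministic truncation (tail) part and a coefficient part, and then to split the coefficient error into a deterministic quadrature error and a mean-zero random part. Write the computed coefficients as $\widehat{\varphi}_{j_1,j_2} = \frac{\pi^2}{n_1n_2}\sum_{k_1,k_2}\Phi^{obs}_{k_1,k_2}\xi_{j_1,j_2}(x_{k_1,k_2})$, so that $\widetilde{\varphi}^{N_1,N_2}=\sum_{j_1=1}^{N_1}\sum_{j_2=1}^{N_2}\widehat{\varphi}_{j_1,j_2}\xi_{j_1,j_2}$, and let $\Pi\varphi:=\sum_{j_1=1}^{N_1}\sum_{j_2=1}^{N_2}\varphi_{j_1,j_2}\xi_{j_1,j_2}$ be the genuine truncated Fourier series. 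Applying $\|a+b\|^2\le 2\|a\|^2+2\|b\|^2$ with $a=\widetilde{\varphi}^{N_1,N_2}-\Pi\varphi$ and $b=\Pi\varphi-\varphi$, taking expectations, and using the orthonormality of $\{\xi_{j_1,j_2}\}$ in $L^2(\Omega)$, I obtain
\[
\mb{E}\big\|\widetilde{\varphi}^{N_1,N_2}-\varphi\big\|_{L^2(\Omega)}^2 \le 2\sum_{j_1=1}^{N_1}\sum_{j_2=1}^{N_2}\mb{E}\big(\widehat{\varphi}_{j_1,j_2}-\varphi_{j_1,j_2}\big)^2 + 2\big\|\Pi\varphi-\varphi\big\|_{L^2(\Omega)}^2 .
\]
The two factors $2$ are exactly those appearing in the statement, so the remaining task is to bound the tail and the coefficient error separately.

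For the tail, note $\|\Pi\varphi-\varphi\|_{L^2(\Omega)}^2=\sum_{j_1>N_1 \text{ or } j_2>N_2}\varphi_{j_1,j_2}^2$, which I split into the regions $\{j_1>N_1\}$ and $\{j_2>N_2\}$. On the first region $\lambda_{j_1,j_2}^\theta=(j_1^2+j_2^2)^\theta\ge (N_1+1)^{2\theta}$, hence $\varphi_{j_1,j_2}^2\le (N_1+1)^{-2\theta}\lambda_{j_1,j_2}^\theta\varphi_{j_1,j_2}^2$ and the region sum is at most $(N_1+1)^{-2\theta}\|\varphi\|_{H^\theta(\Omega)}^2$; the region $\{j_2>N_2\}$ is symmetric. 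This produces the term $2[(N_1+1)^{-2\theta}+(N_2+1)^{-2\theta}]\|\varphi\|_{H^\theta(\Omega)}^2$ and uses only $\theta>0$.

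Next I insert the model \eqref{model} to write $\widehat{\varphi}_{j_1,j_2}=Q_{j_1,j_2}+R_{j_1,j_2}$, where $Q_{j_1,j_2}=\frac{\pi^2}{n_1n_2}\sum_{k_1,k_2}\varphi(x_{k_1,k_2})\xi_{j_1,j_2}(x_{k_1,k_2})$ is a deterministic product quadrature for $\varphi_{j_1,j_2}=\int_\Omega\varphi\,\xi_{j_1,j_2}\,\d x$, and $R_{j_1,j_2}=\frac{\pi^2}{n_1n_2}\sum_{k_1,k_2}\varepsilon_{k_1,k_2}\ms{W}_{k_1,k_2}\xi_{j_1,j_2}(x_{k_1,k_2})$ has mean zero. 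Since $\mb{E}R_{j_1,j_2}=0$, the cross term drops and $\mb{E}(\widehat{\varphi}_{j_1,j_2}-\varphi_{j_1,j_2})^2=(Q_{j_1,j_2}-\varphi_{j_1,j_2})^2+\mb{E}R_{j_1,j_2}^2$. For the noise, independence and unit variance give $\mb{E}R_{j_1,j_2}^2=\frac{\pi^4}{n_1^2n_2^2}\sum_{k_1,k_2}\varepsilon_{k_1,k_2}^2\,\xi_{j_1,j_2}(x_{k_1,k_2})^2$; using $\varepsilon_{k_1,k_2}\le\varepsilon_{\max}$, the identity $\xi_{j_1,j_2}^2=\frac{4}{\pi^2}\sin^2(j_1\cdot)\sin^2(j_2\cdot)$, and the exact discrete sum $\sum_{k_i=1}^{n_i}\sin^2(j_i\mc{X}_{k_i})=\frac{n_i}{2}$ (valid because $1\le j_i\le N_i<n_i$, which I obtain from the midpoint node structure $\mc{X}_{k_i}=\frac{(2k_i-1)\pi}{2n_i}$ via a telescoping cosine sum), I get $\mb{E}R_{j_1,j_2}^2\le \frac{\pi^2\varepsilon_{\max}^2}{n_1n_2}$. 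Summing over the $N_1N_2$ indices and multiplying by $2$ yields the $\frac{2\pi^2\varepsilon_{\max}^2 N_1N_2}{n_1n_2}$ contribution.

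The deterministic quadrature error is the hard part and the place I expect the main obstacle. Expanding $\varphi=\sum_{m_1,m_2}\varphi_{m_1,m_2}\xi_{m_1,m_2}$ inside $Q_{j_1,j_2}$ and interchanging sums gives $Q_{j_1,j_2}=\sum_{m_1,m_2}\varphi_{m_1,m_2}\,A^{(1)}_{j_1,m_1}A^{(2)}_{j_2,m_2}$ with $A^{(i)}_{j,m}=\frac{2}{n_i}\sum_{k_i}\sin(j\mc{X}_{k_i})\sin(m\mc{X}_{k_i})$. The crux is the discrete orthogonality with aliasing: using $\sin\!\big((2n_ip\pm j)\mc{X}_{k_i}\big)=\pm(-1)^{p}\sin(j\mc{X}_{k_i})$ one shows $A^{(i)}_{j,m}\in\{0,\pm1\}$, equal to the main value only for $m=j$ (since $j\le N_i<n_i$) and nonzero otherwise only when $m\equiv\pm j\pmod{2n_i}$, which forces $m\ge 2n_i-j>n_i$. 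Hence $Q_{j_1,j_2}-\varphi_{j_1,j_2}=\sum'\pm\,\varphi_{m_1,m_2}$, a signed sum over nontrivial aliases, each having $m_1>n_1$ or $m_2>n_2$. A Cauchy--Schwarz split against $\|\varphi\|_{H^\theta(\Omega)}$ then gives $(Q_{j_1,j_2}-\varphi_{j_1,j_2})^2\le\big(\sum'\lambda_{m_1,m_2}^{-\theta}\big)\|\varphi\|_{H^\theta(\Omega)}^2$, where the aliased series converges precisely because $\theta>2$ and, after factoring out the grid scale, is bounded by $\mc{C}_0^2/(n_1n_2)$ with $\mc{C}_0$ as in \eqref{C0}. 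Summing over the $N_1N_2$ indices and applying the factor $2$ gives the $\frac{2\mc{C}_0^2\|\varphi\|_{H^\theta(\Omega)}^2 N_1N_2}{n_1n_2}$ term, and adding the three estimates finishes the proof. The delicate bookkeeping of the alias set, the exact determination of the signs in $A^{(i)}_{j,m}$, and the extraction of the $1/(n_1n_2)$ scaling from the aliased series are where the real work lies; the hypotheses $N_i<n_i$ and $\theta>2$ are used exactly here.
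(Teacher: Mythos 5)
Your proposal is correct and follows essentially the same route as the paper: an orthogonal split into low-frequency coefficient error plus Fourier tail, the tail controlled by the $H^\theta$ decay, the coefficient error separated into a mean-zero noise part (handled by independence and the discrete identity $\sum_{k_i}\sin^2(j_i\mc{X}_{k_i})=n_i/2$, which is the paper's Lemma 3.3) and a deterministic aliasing error controlled by the decay of the aliased coefficients, with $\theta>2$ entering exactly where you say. The only cosmetic differences are that you harvest the factor $2$ from the outer $2(a^2+b^2)$ inequality and the vanishing cross-expectation rather than from the paper's inner $2(a^2+b^2)$ split, and you bound the alias sum by Cauchy--Schwarz rather than the paper's termwise triangle inequality; both yield the same constants and share the same (paper-level) looseness in converting $n_1^{-2\theta}+n_2^{-2\theta}+(n_1n_2)^{-\theta}$ into $1/(n_1n_2)$ for large $n_1,n_2$.
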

\begin{remark}
If we choose $N_1=N_1(n_1),N_2=N_2(n_2)$ satisfying
\begin{align*}
\lim_{n_1 \to \infty} N_1= \lim_{n_2 \to \infty} N_2 = \infty, \quad \lim_{n_1,n_2 \to \infty} \frac{N_1N_2}{n_1n_2}=0,
\end{align*} 
then $\mb{E} \left\| \widetilde \varphi ^{N_1,N_2} - \varphi \right\| ^2_{L^2(\Omega)}  $ tends to zero as $n_1,n_2$ tend to infinity. 
\end{remark}
The following Lemma is useful for proving Lemma \ref{appp}:
\begin{lemma} \label{lm22} Let $m_1,m_2$ be natural numbers less than $n_1,n_2$ respectively. Set
\begin{align*}    
\mf{C}_{j_1,j_2,m_1,m_2} := \sum_{ k_1=1 }^{ n_1 } \sum_{ k_2=1 }^{ n_2 }  \xi_{j_1,j_2}(x_{k_1,k_2}) \xi_{m_1,m_2}(x_{k_1,k_2}). 
\end{align*} 
Then, we have  
	\begin{align*}    
	\mf{C}_{j_1,j_2,m_1,m_2} = \begin{cases}         
	\dfrac{(-1)^{l_1+l_2}}{\pi^2} n_1 n_2, &\text{if } (j_1,m_1) = \pm (j_2,m_2) + (2l_1n_1,2l_2n_2), \\
	-\dfrac{(-1)^{l_1+l_2}}{\pi^2} n_1 n_2, &\text{if } (j_1,m_1) = \pm (-j_2,m_2) + (2l_1n_1,2l_2n_2), \\
	0, &\text{otherwise}.
	\end{cases} 
	\end{align*} 
\end{lemma}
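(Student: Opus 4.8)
The plan is to exploit the tensor-product structure of the eigenfunctions and reduce everything to a single one-dimensional cosine-sum identity. Since for $d=2$ we have $\xi_{j_1,j_2}(x_1,x_2)=\frac{2}{\pi}\sin(j_1x_1)\sin(j_2x_2)$, the summand factorizes across the two coordinates, so that
$$\mf{C}_{j_1,j_2,m_1,m_2} = \frac{4}{\pi^2}\, S_{n_1}(j_1,m_1)\, S_{n_2}(j_2,m_2), \qquad S_n(j,m):=\sum_{k=1}^{n}\sin(j\mc{X}_k)\sin(m\mc{X}_k),$$
where $\mc{X}_k=\frac{2k-1}{2n}\pi$. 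It therefore suffices to evaluate the one-variable sum $S_n(j,m)$ and then reassemble.

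Second, applying the product-to-sum identity $\sin a\sin b=\tfrac12[\cos(a-b)-\cos(a+b)]$ reduces $S_n(j,m)$ to $\tfrac12[T_n(j-m)-T_n(j+m)]$, where $T_n(p):=\sum_{k=1}^{n}\cos(p\mc{X}_k)$. The heart of the argument is the identity
$$T_n(p)=\sum_{k=1}^{n}\cos\!\Big(p\,\tfrac{(2k-1)\pi}{2n}\Big)=\begin{cases} (-1)^{\ell}\,n, & p=2n\ell,\ \ell\in\mb{Z},\\[2pt] 0, & \text{otherwise}. \end{cases}$$
I would prove this by writing $T_n(p)=\mathrm{Re}\sum_{k=1}^{n}e^{i(2k-1)\theta}$ with $\theta=\frac{p\pi}{2n}$ and summing the geometric series, which gives $\frac{\sin(2n\theta)}{2\sin\theta}=\frac{\sin(p\pi)}{2\sin\theta}=0$ whenever $\sin\theta\neq0$; the degenerate case $\sin\theta=0$, i.e.\ $p\in 2n\mb{Z}$, is handled directly since then $\cos((2k-1)\theta)=(-1)^{\ell}$ for every $k$, yielding the sum $(-1)^{\ell}n$.

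Third, feeding $p=j-m$ and $p=j+m$ into this identity gives $S_n(j,m)=\tfrac{n}{2}(-1)^{\ell}$ when $j=m+2n\ell$, $S_n(j,m)=-\tfrac{n}{2}(-1)^{\ell}$ when $j=-m+2n\ell$, and $S_n(j,m)=0$ otherwise. Here the hypothesis $1\le m\le n-1$ is essential: it guarantees that $j-m$ and $j+m$ cannot both be multiples of $2n$ (their difference $2m$ is not), so at most one branch fires and $S_n$ is unambiguously defined. Finally, multiplying $S_{n_1}(j_1,m_1)$ by $S_{n_2}(j_2,m_2)$ and restoring the prefactor $\frac{4}{\pi^2}$ produces the value $\pm\frac{(-1)^{l_1+l_2}}{\pi^2}n_1n_2$; tracking the four sign combinations collapses into the two stated cases, the $+$ case corresponding to the equal-sign conditions $(j_1,j_2)=\pm(m_1,m_2)+(2l_1n_1,2l_2n_2)$ and the $-$ case to the mixed-sign conditions $(j_1,j_2)=\pm(-m_1,m_2)+(2l_1n_1,2l_2n_2)$. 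I expect the main obstacle to be the cosine-sum identity of the second step (isolating the degenerate resonance $p\in 2n\mb{Z}$) together with the mutual-exclusivity bookkeeping of the third step; once those are in place, the factorization and sign tracking are routine.
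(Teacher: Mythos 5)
Your proof is correct. Note that the paper does not actually prove this lemma at all: it only writes ``This Lemma can be proved using the method in [Eub], page 145,'' so your argument supplies the missing computation, and it is exactly the kind of discrete-orthogonality/aliasing calculation that reference performs in one dimension, tensorized to two. The key identity $\sum_{k=1}^{n}\cos\bigl(p\tfrac{(2k-1)\pi}{2n}\bigr)=(-1)^{\ell}n$ for $p=2n\ell$ and $0$ otherwise is verified correctly (the real part of the geometric sum is $\sin(p\pi)/(2\sin\theta)$, which vanishes off the resonant set), and your observation that the hypothesis $1\le m\le n-1$ forces at most one of $j\mp m\in 2n\mathbb{Z}$ to hold is precisely the point that makes the case split well defined. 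One remark: the lemma as printed states the resonance condition as $(j_1,m_1)=\pm(j_2,m_2)+(2l_1n_1,2l_2n_2)$, which pairs the two first indices against the two second indices and is evidently a typo for $(j_1,j_2)=\pm(m_1,m_2)+(2l_1n_1,2l_2n_2)$ (this is the form actually used in Step 1 of the proof of Lemma 4.1); you have silently worked with the corrected version, and your sign bookkeeping for the four combinations collapsing into the two displayed cases checks out.
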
 
\begin{proof} [Proof of Lemma \ref{lm22}]	
	This Lemma can be prove using the method  in \cite{Eub}, page 145.
\end{proof}
\begin{proof} [Proof of Lemma \ref{appp}]
Recall that $\varphi_{j_1,j_2}  = \left\langle \varphi, \xi_{j_1,j_2} \right\rangle$, for $j_1,j_2 \in \mb{Z}^+$. Using \eqref{phitilde2d} and the fact that
$$    
\varphi(x) = \sum_{ j_1=1 }^{ \infty } \sum_{ j_2=1 }^{ \infty } \varphi_{j_1,j_2} \xi_{j_1,j_2}(x),
$$ 
we have
\begin{align} \label{4e}   
\left\| \widetilde \varphi ^{N_1,N_2} - \varphi \right\| ^2_{L^2(\Omega)} &=  \sum_{ j_1=1 }^{ N_1 } \sum_{ j_2=1 }^{ N_2 } \Bigg( \frac{\pi^2}{n_1n_2} \sum_{ k_1=1 }^{ n_1 } \sum_{ k_2=1 }^{ n_2}  \Phi_{k_1,k_2}^{obs}  \xi_{j_1,j_2}(x_{k_1,k_2}) - \varphi_{j_1,j_2} \Bigg)^2 \nn \\
&+ \sum_{ j_1=1 }^{ N_1 } \sum_{ j_2=N_2+1 }^{ \infty } \varphi_{j_1,j_2}^2 + \sum_{ j_1=N_1+1 }^{ \infty } \sum_{ j_2=1 }^{ N_2 } \varphi_{j_1,j_2}^2 + \sum_{ j_1=N_1+1 }^{ \infty } \sum_{ j_2=N_2+1 }^{ \infty } \varphi_{j_1,j_2}^2 \nn \\
&=: \mc{E}_1 + \mc{E}_{2,1} + \mc{E}_{2,2} + \mc{E}_{2,3} . 
\end{align}
\textbf{Part A} (Estimating $\mc{E}_1$). Construct estimators for the coefficients $\varphi_{j_1,j_2}$, for $j_1 \le N_1, j_2 \le N_2$, as follows 
\begin{align*}    
\varphi_{j_1,j_2} \approx \dfrac{\pi^2}{n_1 n_2} \sum_{ k_1=1 }^{ n_1 } \sum_{ k_2=1 }^{ n_2 } \varphi(x_{k_1,k_2}) \xi_{j_1,j_2}(x_{k_1,k_2}),
\end{align*}  
and set
\begin{align} \label{upsilon}     \Upsilon_{j_1,j_2}:= \dfrac{\pi^2}{n_1 n_2} \sum_{ k_1=1 }^{ n_1 } \sum_{ k_2=1 }^{ n_2 } \varphi(x_{k_1,k_2}) \xi_{j_1,j_2}(x_{k_1,k_2}) - \varphi_{j_1,j_2}.
\end{align} 
Then, we get
\begin{align*}    
\mc{E}_1
&= \sum_{ j_1=1 }^{ N_1 } \sum_{ j_2=1 }^{ N_2 } \Bigg( \frac{\pi^2}{n_1n_2} \sum_{ k_1=1 }^{ n_1 } \sum_{ k_2=1 }^{ n_2}  \varepsilon_{k_1,k_2} \ms{W}_{k_1,k_2} \xi_{j_1,j_2}(x_{k_1,k_2}) + \Upsilon_{j_1,j_2} \Bigg)^2 \\
& \le 2 \sum_{ j_1=1 }^{ N_1 } \sum_{ j_2=1 }^{ N_2 } \Bigg( \frac{\pi^2}{n_1n_2} \sum_{ k_1=1 }^{ n_1 } \sum_{ k_2=1 }^{ n_2}  \varepsilon_{k_1,k_2} \ms{W}_{k_1,k_2} \xi_{j_1,j_2}(x_{k_1,k_2}) \Bigg)^2 + 2 \sum_{ j_1=1 }^{ N_1 } \sum_{ j_2=1 }^{ N_2 }  \Upsilon_{j_1,j_2} ^2. \\
&=: \mc{E}_{1,1}+ \mc{E}_{1,2}.
\end{align*} 
The first quantity can be estimated as follows
\begin{align*}    
\mb{E}\mc{E}_{1,1} \le  \frac{2 \pi^4}{n_1^2n_2^2} \varepsilon^2_{\max}  \sum_{ j_1=1 }^{ N_1 } \sum_{ j_2=1 }^{ N_2 } \mb{E} \Bigg( \sum_{ k_1=1 }^{ n_1 } \sum_{ k_2=1 }^{ n_2}   \ms{W}_{k_1,k_2} \xi_{j_1,j_2}(x_{k_1,k_2}) \Bigg)^2.
\end{align*}   
Using the properties $\mb{E} \left( \ms{W}_{k_1,k_2} \ms{W}_{l_1,l_2} \right) = \delta_{k_1 l_1} \delta_{k_2 l_2} $ and $\sum_{ k_1=1 }^{ n_1 } \sum_{ k_2=1 }^{ n_2} \xi_{j_1,j_2}^2(x_{k_1,k_2}) = \mf{C}_{j_1,j_2,j_1,j_2} = \pi^{-2}n_1n_2$ (see Lemma \ref{lm22}), we obtain
\begin{align*}   
\mb{E} \mc{E}_{1,1} &\le  \frac{2 \pi^4}{n_1^2n_2^2} \varepsilon^2_{\max}  \sum_{ j_1=1 }^{ N_1 } \sum_{ j_2=1 }^{ N_2 } \Bigg( \sum_{ k_1=1 }^{ n_1 } \sum_{ k_2=1 }^{ n_2}  \mb{E} \ms{W}^2_{k_1,k_2}  \xi^2_{j_1,j_2}(x_{k_1,k_2}) \Bigg)  =  \frac{2 \pi^2   N_1 N_2}{n_1n_2} \varepsilon^2_{\max}.
\end{align*}
In order to estimate the term $\mb{E} \mc{E}_{1,2}$, we need to undergo some steps as follows:

\noindent \textbf{Step 1} (Finding an explicit form for the error $\Upsilon_{j_1,j_2}$). In this step, we will prove that
\begin{align} \label{upid}   
\Upsilon_{j_1,j_2} =  \sum_{ l_2=1 }^{ \infty } (-1)^{l_2}  \varphi_{ j_1, 2l_2n_2 \pm j_2} + \sum_{ l_1=1 }^{ \infty } (-1)^{l_1}  \varphi_{2l_1n_1 \pm j_1, j_2} +\sum_{ l_1=1 }^{ \infty } \sum_{ l_2=1 }^{ \infty }(-1)^{l_1+l_2}  \varphi_{ j_1, 2l_2n_2 \pm j_2},
\end{align}
where we define
\begin{align}  \label{defp1}  
\varphi_{2l_1n_1 \pm j_1,j_2}:= \varphi_{2l_1n_1 + j_1,j_2} - \varphi_{2l_1n_1 - j_1,j_2}, \quad \varphi_{j_1,2l_2n_2 \pm j_2}:= \varphi_{j_1,2l_2n_2 + j_2} - \varphi_{j_1,2l_2n_2 - j_2},
\end{align}
and
\begin{align}  \label{defp2}  
\varphi_{2l_1n_1 \pm j_1,2l_2n_2 \pm j_2}:= \varphi_{2l_1n_1 + j_1,2l_2n_2 + j_2} &- \varphi_{2l_1n_1 + j_1,2l_2n_2 - j_2} \nn \\
&+ \varphi_{2l_1n_1 - j_1,2l_2n_2 - j_2} - \varphi_{2l_1n_1 - j_1,2l_2n_2 + j_2}.
\end{align}  
Indeed, since $\varphi(x_{k_1,k_2}) = \sum_{ m_1=1 }^{ \infty } \sum_{ m_2=1 }^{ \infty } \varphi_{m_1,m_2} \xi_{m_1,m_2}(x_{k_1,k_2}),$
we have
\begin{align*}    
\sum_{ k_1=1 }^{ n_1 } \sum_{ k_2=1 }^{ n_2 } \varphi(x_{k_1,k_2}) \xi_{j_1,j_2}(x_{k_1,k_2}) &=   \sum_{ m_1=1 }^{ \infty } \sum_{ m_2=1 }^{ \infty } \varphi_{m_1,m_2} \Bigg[ \sum_{ k_1=1 }^{ n_1 } \sum_{ k_2=1 }^{ n_2 } \xi_{m_1,m_2}(x_{k_1,k_2})  \xi_{j_1,j_2}(x_{k_1,k_2}) \Bigg] \\
&= \sum_{ m_1<n_1 } \sum_{ m_2<n_2 } \varphi_{m_1,m_2} \mf{C}_{j_1,j_2,m_1,m_2} + \sum_{ m_1<n_1 } \sum_{ m_2 \ge n_2 } \varphi_{m_1,m_2} \mf{C}_{j_1,j_2,m_1,m_2} \\
&+ \sum_{ m_1 \ge n_1 } \sum_{ m_2 < n_2 } \varphi_{m_1,m_2} \mf{C}_{j_1,j_2,m_1,m_2} + \sum_{ m_1 \ge n_1 } \sum_{ m_2 \ge n_2 } \varphi_{m_1,m_2} \mf{C}_{j_1,j_2,m_1,m_2}.
\end{align*}    
Applying Lemma \ref{lm22}, we get
\begin{align*}    
\dfrac{\pi^2}{n_1 n_2} \sum_{ k_1=1 }^{ n_1 } \sum_{ k_2=1 }^{ n_2 } \varphi(x_{k_1,k_2}) \xi_{j_1,j_2}(x_{k_1,k_2}) &=  \varphi_{j_1,j_2} +  \sum_{ l_2=1 }^{ \infty } (-1)^{l_2}  \varphi_{ j_1, 2l_2n_2 \pm j_2} \\
&+ \sum_{ l_1=1 }^{ \infty } (-1)^{l_1}  \varphi_{2l_1n_1 \pm j_1, j_2} +\sum_{ l_1=1 }^{ \infty } \sum_{ l_2=1 }^{ \infty }(-1)^{l_1+l_2}  \varphi_{ j_1, 2l_2n_2 \pm j_2},
\end{align*} 
which gives us formula \eqref{upid}.

\noindent \textbf{Step 2} (Estimating $\mc{E}_{1,2}$). From \eqref{upid}, we  see
\begin{align*}    
\left| \Upsilon_{j_1,j_2} \right| \le \sum_{ l_2=1 }^{ \infty } \left| \varphi_{ j_1, 2l_2n_2 \pm j_2} \right|   + \sum_{ l_1=1 }^{ \infty } \left| \varphi_{2l_1n_1 \pm j_1, j_2} \right|   +\sum_{ l_1=1 }^{ \infty } \sum_{ l_2=1 }^{ \infty } \left| \varphi_{ j_1, 2l_2n_2 \pm j_2}  \right|, \quad j_1 \le N_1, j_2 \le N_2.
\end{align*} 
Since $\varphi \in H^\theta (\Omega)$, we have $\lambda_{j_1,j_2}^{\frac{\theta}{2}} \left| \varphi_{j_1,j_2} \right|  \le \left\| \varphi \right\|_{H^\theta (\Omega)}$ for $j_1,j_2 \in \mb{Z}^+$. It follows that 
\begin{align*}    
\left| \varphi_{ j_1, 2l_2n_2 \pm j_2} \right| \le \dfrac{2}{\left( j_1^2 + (2l_2 n_2 - j_2)^2 \right) ^{\frac{\theta}{2}} } \left\| \varphi \right\|_{H^\theta (\Omega)} \le 2 l_2^{-\theta} n_2^{-\theta} \left\| \varphi \right\|_{H^\theta (\Omega)} . 
\end{align*} 
Similarly, one gets
\begin{align*}    
\left| \varphi_{2l_1n_1 \pm j_1, j_2} \right| \le 2 l_1^{-\theta} n_1^{-\theta} \left\| \varphi \right\|_{H^\theta (\Omega)}, \quad \left| \varphi_{ j_1, 2l_2n_2 \pm j_2}  \right| \le 4 \left( l_1^2 n_1^2 + l_2^2 n_2^2 \right)^{-\frac{\theta}{2}} \left\| \varphi \right\|_{H^\theta (\Omega)}. 
\end{align*} 
Hence
\begin{align*}    
\left| \Upsilon_{j_1,j_2} \right|  \le \left( 2 n_2^{-\theta} \sum_{ l_2=1 }^{ \infty } l_2^{-\theta} +  2 n_1^{-\theta} \sum_{ l_1=1 }^{ \infty } l_1^{-\theta} + 4(n_1^{-\theta} + n_2^{-\theta} ) \sum_{ l_1=1 }^{ \infty } \sum_{ l_2=1 }^{ \infty } l_1^{-\frac{\theta}{2}} l_2^{-\frac{\theta}{2}} \right) \left\| \varphi \right\|_{H^\theta (\Omega)}.
\end{align*}
Put
\begin{align}  \label{C0}  
\mc{C}_0 := 2\sum_{ l_1=1 }^{ \infty } l_1^{-\theta} + 4 \sum_{ l_1=1 }^{ \infty } \sum_{ l_2=1 }^{ \infty } l_1^{-\frac{\theta}{2}} l_2^{-\frac{\theta}{2}}. 
\end{align} 
Then, one has
\begin{align*}    
\mc{E}_{1,2} = 2 \sum_{ j_1=1 }^{ N_1 } \sum_{ j_1=2 }^{ N_2 }  \Upsilon_{j_1,j_2}^2 \le  \dfrac{ 2 \mc{C}_0^2 N_1 N_2 }{n_1 n_2} \left\| \varphi \right\|^2_{H^\theta (\Omega)},
\end{align*}
 for $n_1,n_2$ large enough. Now, we conclude that
\begin{align} \label{e1}   
\mb{E} \mc{E}_1 \le \mb{E} \mc{E}_{1,1} + \mc{E}_{1,2} \le \dfrac{\left( 2 \pi^2 \varepsilon^2_{\max} +  2 \mc{C}_0^2  \left\| \varphi \right\|^2_{H^\theta (\Omega)} \right) N_1 N_2 }{n_1 n_2}. 
\end{align}  
\textbf{Part B} (Estimating $\mc{E}_{2,1}$ to $\mc{E}_{2,3}$). From the definition of $\mc{E}_{2,1}$, one can see that
\begin{align*}    
\mc{E}_{2,1} = \sum_{ j_1=1 }^{ N_1 } \sum_{ j_2=N_2+1 }^{ \infty } \left( j_1^2+j_2^2 \right) ^{-\theta} \lambda_{j_1,j_2}^{\theta} \varphi_{j_1,j_2}^2 \le (N_2+1)^{-2 \theta}  \left\| \varphi \right\|^2_{H^\theta(\Omega)}. 
\end{align*}       
Similarly, one obtains
\begin{align*}    
\mc{E}_{2,2} \le (N_1+1)^{-2 \theta}  \left\| \varphi \right\|^2_{H^\theta(\Omega)}, \quad \mc{E}_{2,3} \le \left[  (N_1+1)^{-2 \theta}+ (N_2+1)^{-2 \theta} \right]  \left\| \varphi \right\|^2_{H^\theta(\Omega)}. 
\end{align*}  
This leads to
\begin{align} \label{3e}   
\mc{E}_{2,1} +  \mc{E}_{2,2} + \mc{E}_{2,3} \le 2 \left[ (N_1+1)^{-2 \theta} + (N_2+1)^{-2 \theta} \right] \left\| \varphi \right\|^2_{H^\theta(\Omega)}.
\end{align}  
Now, from \eqref{4e}, \eqref{e1} and \eqref{3e}, we deduce that
\begin{align*}    
\mb{E} \left\| \widetilde \varphi ^{N_1,N_2} - \varphi \right\|_{L^2(\Omega)} ^2 \le \dfrac{\left( 2 \pi^2 \varepsilon^2_{\max} +  2 \mc{C}_0^2  \left\| \varphi \right\|^2_{H^\theta (\Omega)} \right) N_1 N_2 }{n_1 n_2} + 2 \left[ (N_1+1)^{-2 \theta} + (N_2+1)^{-2 \theta} \right] \left\| \varphi \right\|^2_{H^\theta(\Omega)}.
\end{align*}  
This completes the proof.
\end{proof} 
\subsection{The ill-posedness of problem with discrete data} \label{subsec33} 
This subsection is aimed to demonstrate that the solution of the present problem is not stable, which follows that our problem is not well-posed. The solution of our problem is called stable if for any sequence $\varphi^{n_1,n_2}$,  {we have}
\begin{align} \label{Ep}   
\lim_{n_1,n_2 \to \infty} \mb{E} \left\| \varphi^{n_1,n_2} - \varphi \right\|^2_{L^2(\Omega)}  = 0,
\end{align}  
 {implies}
\begin{align} \label{Eu} 
\lim_{n_1,n_2 \to \infty} \left\| u^{n_1,n_2} - u  \right\|_{\mb{X}_{\mc{T}}} = 0,
\end{align}  
in which $u^{n_1,n_2}$ satisfies the system
\begin{align} \label{prip} \begin{cases}   \dfrac{\partial}{\partial t} u^{n_1,n_2}(t,x) - \dfrac{\partial^{1-\al}}{\partial t}  \Delta  u^{n_1,n_2}(t,x) = f (t,x,u^{n_1,n_2}(t,x)), &(t,x)  \in (0,\mc{T})  \times  \Omega, \\
u^{n_1,n_2}(t,x) = 0, &(t,x)  \in (0,\mc{T})  \times \partial \Omega, \\
u^{n_1,n_2}(\mc{T},x) = \varphi^{n_1,n_2}(x), &x \in \Omega.
\end{cases}
\end{align} 
Now, we give an example showing that there exists a sequence $\varphi^{n_1,n_2}$ such that \eqref{Ep} holds but \eqref{Eu} does not:
\begin{example}
Let $0< \al < \frac{1}{2}$,   $\varphi=0$, $f(t,x,u(t,x))= \mf{K} u(t,x)$ with
\begin{align*}    
\mf{K} = \mf{K} \left( {\al,\mc{T}} \right) = \dfrac{\ms{M}_1}{2 \ms{M}_2 \mc{T}} \sqrt{\dfrac{1-2\al }{(1-2\al)  +\ms{M}^2_2}},
\end{align*}
and the observed discrete data is
$    
\Phi^{obs}_{k_1,k_2} = \frac{1}{\sqrt[4]{n_1n_2}} \ms{W}_{k_1,k_2}.
$ Based on the idea in Subsection \ref{ss31}, we construct the sequence $\left\{ \varphi ^{n_1,n_2} \right\}$ as follows
\begin{align}  \label{phinn}  
 \varphi^{n_1,n_2} = \sum_{ j_1=1 }^{ n_1-1 } \sum_{ j_2=1 }^{ n_2-1 } \Bigg( \frac{\pi^2}{n_1n_2} \sum_{ k_1=1 }^{ n_1 } \sum_{ k_2=1 }^{ n_2}  \Phi^{obs}_{k_1,k_2}  \xi_{j_1,j_2}(x_{k_1,k_2}) \Bigg) \xi_{j_1,j_2}.
\end{align}
By a similar calculation as in Subsection \ref{ss31}, one can check that 
\begin{align} \label{einput} 
\mb{E} \left\| \varphi^{n_1,n_2} - \varphi \right\|_{L^2(\Omega)}^2   = \mb{E} \Bigg[~ \sum_{ j_1=1 }^{ n_1-1 } \sum_{ j_2=1 }^{ n_2-1 } \Bigg( \frac{\pi^2}{n_1n_2} \sum_{ k_1=1 }^{ n_1 } \sum_{ k_2=1 }^{ n_2}  \Phi^{obs}_{k_1,k_2}  \xi_{j_1,j_2}(x_{k_1,k_2}) \Bigg)^2 ~ \Bigg] = \dfrac{\pi^2 (n_1-1)(n_2-1)}{n_1^{3/2} n_2^{3/2}},
\end{align} 
which implies that $\mb{E} \left\| \varphi^{n_1,n_2} - \varphi \right\|_{L^2(\Omega)}^2  $ tends to zero as $n_1,n_2 \to \infty$.

Next, we show that $\left\| u^{n_1,n_2} -  u \right\|_{\mb{X}_\mc{T}}$ tends to infinity as $n_1,n_2 \to \infty$ . To do this, we first prove that $\mb{E} \left\| u(t,\cdot) \right\|^2_{L^2(\Omega)}  =0$, which implies that $u \equiv 0$. Indeed, from \eqref{opereq} and $\varphi=0$, one has
\begin{align} \label{uip}
u(t,x) = \int\limits_{ 0 }^{ t } \mc{B}(t-s) f(s,x,u(s,x)) \d s -  \int\limits_{ 0 }^{ \mc{T} } \mc{D}(t,\mc{T}-s)  f(s,x,u(s,x)) \d s.
\end{align}
From the inequality $(a+b)^2 \le 2 (a^2+b^2)$, for $a,b \in \mb{R}$, and H\"older's inequality, one can see that 
\begin{align} \label{Euip}  
&\mb{E} \left\| u(t,\cdot) \right\|_{L^2(\Omega)}^2  \nn \\
&\le 2 \mb{E} \left[ \int\limits_{ 0 }^{ t } \left\| \mc{B}(t-s) f(s,\cdot,u(s,\cdot)) \right\|_{L^2(\Omega)}  \d s \right]^2 + 2 \mb{E}    \left[ \int\limits_{ 0 }^{ \mc{T} } \left\| \mc{D}(t,\mc{T}-s) f(s,\cdot,u(s,\cdot)) \right\|_{L^2(\Omega)}  \d s \right]^2 \nn \\
&\le 2 t \int\limits_{ 0 }^{ t } \mb{E} \left\|  \mc{B}(t-s) f(s,\cdot,u(s,\cdot)) \right\|_{L^2(\Omega)} ^2 \d s + 2 \mc{T} \int\limits_{ 0 }^{ \mc{T} } \mb{E} \left\| \mc{D}(t,\mc{T}-s) f(s,\cdot,u(s,\cdot)) \right\|_{L^2(\Omega)}^2  \d s.
\end{align}   
Using Lemma \ref{lmop} and the fact that $f(s,x,u(s,x)) = \mf{K} u(s,x)$, one gets 
\begin{align*}   
\left\| \mc{B}(t-s) f(s,\cdot,u(s,\cdot)) \right\|_{L^2(\Omega)} ^2 &= \mf{K}^2 \left\| \mc{B}(t-s) u(s,\cdot) \right\|_{L^2(\Omega)} ^2 \le \mf{K}^2 \ms{M}^2_2 \left\| u(s,\cdot) \right\|_{L^2(\Omega)}^2 ,
\end{align*} 
and 
\begin{align*}   &\left\| \mc{D}(t,\mc{T}-s) f(s,\cdot,u(s,\cdot)) \right\|_{L^2(\Omega)} ^2 = \mf{K}^2 \left\| \mc{D}(t,\mc{T}-s) u(s,\cdot) \right\|_{L^2(\Omega)} ^2 
\le \mf{K}^2 \left( \dfrac{\ms{M}^2_2 \mc{T}^\al}{\ms{M}_1 (\mc{T}-s)^\al} \right)^2 \left\| u(s,\cdot) \right\|_{L^2(\Omega)}^2 .\end{align*}     
Hence, we deduce that
\begin{align*}   
\mb{E} \left\| u(t,\cdot) \right\|_{L^2(\Omega)}^2  
&\le 2 \mf{K}^2 \ms{M}^2_2 t \int\limits_{ 0 }^{ t } \mb{E} \left\| u(s,\cdot) \right\|_{L^2(\Omega)}^2  \d s + 2 \mf{K}^2 \left( \dfrac{\ms{M}^2_2 }{\ms{M}_1 } \right)^2 \mc{T} \int\limits_{ 0 }^{ \mc{T} } \dfrac{ \mc{T}^{2\al}}{ (\mc{T}-s)^{2\al}}\mb{E} \left\| u(s,\cdot) \right\|_{L^2(\Omega)}^2  \d s \\
&\le 2 \mf{K}^2 \ms{M}^2_2 \mc{T}^2 \left( 1+ \frac{ \ms{M}_2^2 }{  (1-2\al)\ms{M}_1^2 }\right)   \sup_{t \in [0,\mc{T}]} \mb{E} \left\| u(t,\cdot) \right\|_{L^2(\Omega)}^2 .
\end{align*}
Since $2 \mf{K}^2 \ms{M}^2_2 \mc{T}^2 \left( 1+ \frac{ \ms{M}_2^2 }{  (1-2\al)\ms{M}_1^2 }\right) = \frac{1}{2}$, we conclude that $u \equiv 0$. 

Now, we are ready to estimate the error $\left\| u^{n_1,n_2} -  u \right\|_{\mb{X}_\mc{T}}$. Applying the result in Section \ref{ss3} (see \eqref{opereq}), one has
\begin{align*}   
u^{n_1,n_2}(0,x) = \mc{A}(0) \varphi^{n_1,n_2}(x) -  \int\limits_{ 0 }^{ \mc{T} } \mc{D}(0,\mc{T}-s)  f(s,x,u^{n_1,n_2}(s,x)) \d s.
\end{align*} 
Since $\left\langle \varphi^{n_1,n_2}, \xi_{j_1,j_2} \right\rangle =0$ for $j_1 \ge n_1$ or $j_2 \ge n_2$ and $E_{\al,1} (0 )=1$, one can see that
$$\mc{A}(0) \varphi^{n_1,n_2}  = \sum_{ j_1=1 }^{ n_1-1 } \sum_{ j_2=1 }^{ n_2-1 }  \frac{\left\langle \varphi^{n_1,n_2}, \xi_{j_1,j_2} \right\rangle}{E_{\al,1} \big(- \lambda_{j_1,j_2} \mathcal{T}^\al \big)}  \xi_{j_1,j_2}.$$ It follows that  
\begin{align}  \label{cbm}  
&2\left\| u^{n_1,n_2}(0,\cdot) \right\|_{L^2(\Omega)}^2  \nn \\
&\ge \left\|\mc{A}(0) \varphi^{n_1,n_2} \right\|_{L^2(\Omega)}^2   -  2\left\| \int\limits_{ 0 }^{ \mc{T} }  \mc{D}(0,\mc{T}-s)  f(s,\cdot,u^{n_1,n_2}(s,\cdot))  \d s \right\|_{L^2(\Omega)}^2  \nn \\
&\ge \sum_{ j_1=1 }^{ n_1-1 } \sum_{ j_2=1 }^{ n_2-1 }  \dfrac{\left\langle \varphi^{n_1,n_2}, \xi_{j_1,j_2} \right\rangle^2}{E^2_{\al,1} \big(- \lambda_{j_1,j_2} \mathcal{T}^\al \big)}     - 2 \mf{K}^2 \Bigg[ \int\limits_{ 0 }^{ \mc{T} }  \left\| \mc{D}(0,\mc{T}-s) u^{n_1,n_2}(s,\cdot) \right\|_{L^2(\Omega)}   \d s \Bigg]^2 \nn \\
&\ge \sum_{ j_1=1 }^{ n_1-1 } \sum_{ j_2=1 }^{ n_2-1 } \dfrac{\lambda^2_{j_1,j_2}\mc{T}^{2\al}}{\ms{M}^2_2} \left\langle \varphi^{n_1,n_2}, \xi_{j_1,j_2} \right\rangle^2 - 2 \mf{K}^2 \mc{T} \int\limits_{ 0 }^{ \mc{T} }  \left\| \mc{D}(0,\mc{T}-s) u^{n_1,n_2}(s,\cdot) \right\|_{L^2(\Omega)}^2   \d s.
\end{align} 
By a similar calculation as in \eqref{einput}, we have
\begin{align}  \label{cmh}  
\mb{E} \left\langle \varphi^{n_1,n_2}, \xi_{j_1,j_2} \right\rangle^2 &= \mb{E} \Bigg( \frac{\pi^2}{n_1n_2} \sum_{ k_1=1 }^{ n_1 } \sum_{ k_2=1 }^{ n_2}  \Phi^{obs}_{k_1,k_2}  \xi_{j_1,j_2}(x_{k_1,k_2}) \Bigg)^2 = \frac{\pi^2}{n_1^{3/2} n_2^{3/2}}.
\end{align}      
On the other hand
\begin{align}  \label{cbb}  
2 \mf{K}^2 \mc{T} \int\limits_{ 0 }^{ \mc{T} } \mb{E} \left\| \mc{D}(0,\mc{T}-s) u^{n_1,n_2}(s,\cdot) \right\|_{L^2(\Omega)}^2   \d s &\le 2 \mf{K}^2 \left( \dfrac{\ms{M}^2_2 }{\ms{M}_1 } \right)^2 \mc{T} \int\limits_{ 0 }^{ \mc{T} } \dfrac{ \mc{T}^{2\al}}{ (\mc{T}-s)^{2\al}}\mb{E} \left\| u(s,\cdot) \right\|_{L^2(\Omega)}^2  \d s  \nn \\
&\le 2 \mf{K}^2  \mc{T}^2\frac{\ms{M}_2^4}{(1-2\al) \ms{M}_1^2} \sup_{t \in [0,\mc{T}]} \mb{E} \left\| u(t,\cdot) \right\|_{L^2(\Omega)}^2 . 
\end{align}
Combining \eqref{cbm}-\eqref{cbb}, we deduce that
\begin{align*}    
2\mb{E} \left\| u^{n_1,n_2}(0,\cdot) \right\|_{L^2(\Omega)}^2  \ge \dfrac{\pi^2 \mc{T}^{2\al}  }{\ms{M}_2^2  } \frac{\lambda^2_{n_1-1,n_2-1}}{n_1^{3/2} n_2^{3/2}} - 2 \mf{K}^2  \mc{T}^2\frac{\ms{M}_2^4}{(1-2\al) \ms{M}_1^2} \sup_{t \in [0,\mc{T}]} \mb{E} \left\| u(t,\cdot) \right\|_{L^2(\Omega)}^2 .
\end{align*} 
Since $2 \mf{K}^2  \mc{T}^2\frac{\ms{M}_2^4}{(1-2\al) \ms{M}_1^2} \le \frac{1}{2}$ and $\lambda_{j_1,j_2}=j_1^2+j_2^2$, we obtain 
\begin{align*}    
\frac{3}{2} \sup_{t \in [0,\mc{T}]} \mb{E} \left\| u^{n_1,n_2}(t,\cdot) \right\|_{L^2(\Omega)}^2  \ge \dfrac{\pi^2 \mc{T}^{2\al}  }{\ms{M}_2^2  } \frac{(n_1-1)^2+(n_2-1)^2}{n_1^{3/2} n_2^{3/2}}.
\end{align*} 
Using the fact that
\begin{align*}    
\left\| u^{n_1,n_2} - u \right\|_{\mb{X}_{\mc{T}}} = \left\| u^{n_1,n_2} \right\|_{\mb{X}_{\mc{T}}} = \sup_{t \in [0,\mc{T}]} \sqrt{\mb{E} \left\| u^{n_1,n_2}(t,\cdot) \right\|_{L^2(\Omega)}^2  } \ge \sqrt{ \sup_{t \in [0,\mc{T}]} \mb{E} \left\| u^{n_1,n_2}(t,\cdot) \right\|_{L^2(\Omega)}^2   },
\end{align*}
we conclude that $\left\| u^{n_1,n_2} - u \right\|_{\mb{X}_{\mc{T}}}$ tends to infinity as $n_1,n_2 \to \infty$.   
\end{example}
\subsection{Fourier truncated method and regularized solution} 
For $N_1,N_2$ are as in Lemma \ref{appp} and the function $g \in L^2(\Omega)$, we set
\begin{align*}    
\mb{A}^{N_1,N_2}(t) g &:= \sum_{ j_1=1 }^{ N_1 } \sum_{ j_2=1 }^{ N_2 } \Bigg( \dfrac{E_{\al,1} \big(- \lambda_{j_1,j_2} t^\al \big)}{E_{\al,1} \big(- \lambda_{j_1,j_2} \mathcal{T}^\al \big)} \left\langle g, \xi_{j_1,j_2} \right\rangle  \Bigg) \xi_{j_1,j_2}, \\
\mb{B}^{N_1,N_2}(t) g &:= \sum_{ j_1=1 }^{ N_1 } \sum_{ j_2=1 }^{ N_2 } \Big(  E_{\al,1} \big( -\lambda_{j_1,j_2} t^\al \big) \left\langle g, \xi_{j_1,j_2} \right\rangle \Big) \xi_{j_1,j_2},
\end{align*} 
and $\mb{D}^{N_1,N_2}(t,s)g:=\mb{A}^{N_1,N_2}(t) \mb{B}^{N_1,N_2}(s)g$, which are truncated series of $\mc{A}(t)g,\mc{B}(t)g$ and $\mc{D}(t,s)g$ respectively. 
Based on the approximate function for $\varphi$ constructed as in Lemma \ref{appp} (denote by $\widetilde{\varphi}^{N_1,N_2}$), we give the regularized solution as follows
\begin{align}  \label{unnre}  
\widetilde {\mf{u}}^{N_1,N_2}(t,x) &= \mb{A}^{N_1,N_2}(t) \varphi^{N_1,N_2}(x) + \int\limits_{ 0 }^{ t } \mb{B}^{N_1,N_2}(t-s) f(s,x,\widetilde {\mf{u}}^{N_1,N_2}(s,x)) \d s \nn \\
&-  \int\limits_{ 0 }^{ \mc{T} } \mb{D}^{N_1,N_2}(t,\mc{T}-s)  f(s,x,\widetilde {\mf{u}}^{N_1,N_2}(s,x)) \d s.
\end{align}

Before presenting the convergence rate between $\widetilde{\mf{u}}^{N_1,N_2}$ and the solution $u$ of \eqref{pr}  of the two-dimensional, we show the existence and uniqueness of the solution $\widetilde{\mf{u}}^{N_1,N_2}$:
\begin{lemma} \label{uniquelm}
Assume that $f$ satisfies the globally Lipschitz property, i.e. there exists a positive
constant $K$ such that 
\begin{align} \label{aspt}   
\left\| f(t,\cdot,u_1(t,\cdot)) - f(t,\cdot,u_2(t,\cdot)) \right\|_{{L}^2(\Omega) }  \le K \left\| u_1(t,\cdot) - u_2(t,\cdot) \right\|_{L^2(\Omega)} , \quad u_1,u_2 \in L^2(\Omega).
\end{align}
Assume further that $K \in \left( 0, \mc{Q}^{-1}_{\al,\mc{T}} \right) $  where
\begin{align} \label{ask}
\mc{Q}_{\al,\mc{T}} =  \dfrac{2 \ms{M}_2 \mc{T} \sqrt{2 \left( \ms{M}_1^2(1-\al)^2+\ms{M}_2^2 \right) }}{\ms{M}_1(1-\al)} .
\end{align}     
Then, the integral equation \eqref{unnre} has a unique solution $\widetilde{\mf{u}}^{N_1,N_2} \in \mb{X}_{\mc{T}}$.
\end{lemma}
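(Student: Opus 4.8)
The plan is to realize $\widetilde{\mf{u}}^{N_1,N_2}$ as the unique fixed point of the map $\mc{G}$ defined on $\mb{X}_{\mc{T}}$ by
\[
(\mc{G} w)(t,x) := \mb{A}^{N_1,N_2}(t)\varphi^{N_1,N_2}(x) + \int_0^t \mb{B}^{N_1,N_2}(t-s) f(s,x,w(s,x))\,\d s - \int_0^{\mc{T}} \mb{D}^{N_1,N_2}(t,\mc{T}-s) f(s,x,w(s,x))\,\d s,
\]
so that a solution of \eqref{unnre} is exactly a fixed point of $\mc{G}$. I would then invoke the Banach fixed point theorem, which requires two things: that $\mc{G}$ maps $\mb{X}_{\mc{T}}$ into itself, and that $\mc{G}$ is a contraction on $\mb{X}_{\mc{T}}$.

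For the self-mapping property, I first note that the truncated operators inherit the bounds of Lemma \ref{lmop}, since truncating a Fourier expansion only decreases the $L^2$-norm; thus $\left\| \mb{B}^{N_1,N_2}(t)g \right\|_{L^2(\Omega)} \le \ms{M}_2 \left\| g \right\|_{L^2(\Omega)}$ and $\left\| \mb{D}^{N_1,N_2}(t,s)g \right\|_{L^2(\Omega)} \le \frac{\ms{M}_2^2 \mc{T}^\al}{\ms{M}_1 s^\al}\left\| g \right\|_{L^2(\Omega)}$. The leading term is a finite sum over $j_1\le N_1,\, j_2\le N_2$ whose amplification factors $E_{\al,1}(-\lambda_{j_1,j_2}t^\al)/E_{\al,1}(-\lambda_{j_1,j_2}\mc{T}^\al)$ are bounded on $[0,\mc{T}]$, and $\mb{E}\left\| \varphi^{N_1,N_2} \right\|^2_{L^2(\Omega)}<\infty$ by the computation in Subsection \ref{ss31}; together with the linear-growth bound $\left\| f(s,\cdot,w) \right\|_{L^2(\Omega)} \le \left\| f(s,\cdot,0) \right\|_{L^2(\Omega)} + K\left\| w(s,\cdot) \right\|_{L^2(\Omega)}$ coming from \eqref{aspt}, this shows $\left\| \mc{G}w \right\|_{\mb{X}_{\mc{T}}}<\infty$. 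Predictability of $\mc{G}w$ follows because $\mc{G}w$ is obtained from $w$ and the data $\varphi^{N_1,N_2}$ by deterministic bounded linear operators and Bochner integration.

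For the contraction estimate, the data term cancels in the difference, leaving only the two integrals of $f(s,x,w_1)-f(s,x,w_2)$. I would bound $\mb{E}\left\| (\mc{G}w_1-\mc{G}w_2)(t,\cdot) \right\|^2_{L^2(\Omega)}$ using $(a+b)^2\le 2(a^2+b^2)$, the operator bounds above, and the Lipschitz property \eqref{aspt}. The $\mb{B}$-integral is handled by the Cauchy–Schwarz inequality in $s$ and contributes a factor proportional to $\ms{M}_2^2\mc{T}^2 K^2$. The delicate term is the $\mb{D}$-integral, whose kernel carries the weak singularity $(\mc{T}-s)^{-\al}$; here the right move is to split $(\mc{T}-s)^{-\al}=(\mc{T}-s)^{-\al/2}\cdot(\mc{T}-s)^{-\al/2}$ before applying Cauchy–Schwarz, so that the integrable factor $\int_0^{\mc{T}}(\mc{T}-s)^{-\al}\,\d s = \frac{\mc{T}^{1-\al}}{1-\al}$ (finite precisely because $\al<1$) appears twice and produces the $(1-\al)^{-1}$ dependence visible in \eqref{ask}. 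Collecting the two contributions and taking the supremum over $t$ yields
\[
\left\| \mc{G}w_1-\mc{G}w_2 \right\|_{\mb{X}_{\mc{T}}} \le \tfrac{1}{2}\,K\,\mc{Q}_{\al,\mc{T}}\,\left\| w_1-w_2 \right\|_{\mb{X}_{\mc{T}}},
\]
with $\mc{Q}_{\al,\mc{T}}$ as in \eqref{ask}. Under the hypothesis $K\in\left(0,\mc{Q}_{\al,\mc{T}}^{-1}\right)$ the factor $\tfrac{1}{2}K\mc{Q}_{\al,\mc{T}}$ is strictly less than $1$, so $\mc{G}$ is a contraction and the Banach fixed point theorem delivers a unique $\widetilde{\mf{u}}^{N_1,N_2}\in\mb{X}_{\mc{T}}$.

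The main obstacle I anticipate is precisely the singular kernel $(\mc{T}-s)^{-\al}$ in the $\mb{D}$-term: a naive Cauchy–Schwarz that squares the kernel would force $\int_0^{\mc{T}}(\mc{T}-s)^{-2\al}\,\d s$ to appear, which converges only for $\al<\tfrac12$, whereas the symmetric split above keeps a single power of the integrable singularity on each side and thereby covers the full range $\al\in(0,1)$ while reproducing the exact constant $\mc{Q}_{\al,\mc{T}}$. A secondary point to verify is simply that the bookkeeping of the $\ms{M}_1,\ms{M}_2$ constants and the two powers of $\frac{\mc{T}^{1-\al}}{1-\al}$ assemble correctly into the stated $\mc{Q}_{\al,\mc{T}}$, which is routine once the split is in place.
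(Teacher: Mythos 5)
Your proposal is correct and follows essentially the same route as the paper: a Banach fixed point argument on $\mb{X}_{\mc{T}}$, using the operator bounds of Lemma \ref{lmop}, the Lipschitz condition \eqref{aspt}, the inequality $(a+b)^2\le 2(a^2+b^2)$, and exactly the symmetric splitting $(\mc{T}-s)^{-\al}=(\mc{T}-s)^{-\al/2}(\mc{T}-s)^{-\al/2}$ before Cauchy--Schwarz that produces the $(1-\al)^{-2}$ factor and assembles into the contraction constant $\tfrac{1}{2}K\mc{Q}_{\al,\mc{T}}$. Your additional verification that the map sends $\mb{X}_{\mc{T}}$ into itself is a point the paper leaves implicit, but it changes nothing essential.
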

\begin{proof}
Put
\begin{align*}    
\mf{F}(v(t,x))&:= \mb{A}^{N_1,N_2}(t) \varphi^{N_1,N_2}(x) + \int\limits_{ 0 }^{ t } \mb{B}^{N_1,N_2}(t-s) f(s,x,v(s,x)) \d s \\
&-  \int\limits_{ 0 }^{ \mc{T} } \mb{D}^{N_1,N_2}(t,\mc{T}-s)  f(s,x,v(s,x)) \d s.
\end{align*}
We will show that $\left\| \mf{F}(v_1)-\mf{F}(v_2) \right\|_{\mb{X}_\mc{T}}  \leq  \dfrac{K \mc{Q}_{\alpha,\mc{T}}}{2} \left\| v_1- v_2 \right\|_{\mb{X}_\mc{T}} $, which implies that $\mf{F}$ is a contraction and thus the equation \eqref{unnre} has a unique solution. We first have
\begin{align*}    
\left\| \mf{F} (v_1(t,\cdot)) - \mf{F} (v_2(t,\cdot)) \right\|_{L^2(\Omega)}  &\le \int\limits_{ 0 }^{ t } \left\| \mb{B}^{N_1,N_2}(t-s) \big( f(s,\cdot,v_1(s,\cdot)) - f(s,\cdot,v_2(s,\cdot))\big)   \right\|_{L^2(\Omega)}    \d s \\
&+  \int\limits_{ 0 }^{ \mc{T} } \left\| \mb{D}^{N_1,N_2}(t,\mc{T}-s)  \big( f(s,\cdot,v_1(s,\cdot)) - f(s,\cdot,v_2(s,\cdot))\big)   \right\|_{L^2(\Omega)}  \d s.
\end{align*}    
Using Lemma \ref{lmop} and  assumption \eqref{aspt}, we get
\begin{align*}    
\left\| \mf{F} (v_1(t,\cdot)) - \mf{F} (v_2(t,\cdot)) \right\|_{L^2(\Omega)}  &\le K \ms{M}_2  \int\limits_{ 0 }^{ t } \left\| v_1(s,\cdot) - v_2(s,\cdot)  \right\|_{L^2(\Omega)}  \d s \\
&+ K \frac{\ms{M}_2 ^2}{\ms{M}_1 } \mc{T}^{\al} \int\limits_{ 0 }^{ \mc{T} } (\mc{T}-s)^{-\al}\left\| v_1(s,\cdot) - v_2(s,\cdot)  \right\|_{L^2(\Omega)}  \d s.
\end{align*} 
By H\"older's inequality and $(a+b)^2 \le 2(a^2+b^2)$, for $a,b \in \mb{R}$, we obtain
\begin{align*}    
\mb{E} \big\| \mf{F} (v_1(t,\cdot)) - \mf{F} &(v_2(t,\cdot)) \big\|_{L^2(\Omega)}^2  \le 2 K^2 \ms{M}_2^2 t \int\limits_{ 0 }^{ t } \mb{E} \left\| v_1(t,\cdot) - v_2(t,\cdot)  \right\|_{L^2(\Omega)}^2  \d s \\
&+ 2 K^2 \dfrac{\ms{M}_2^4}{\ms{M}_1^2} \mc{T}^{2 \al} \int\limits_{ 0 }^{ \mc{T} } (\mc{T}-s)^{-\al} \d s  \int\limits_{ 0 }^{ \mc{T} } (\mc{T}-s)^{-\al} \mb{E} \left\| v_1(t,\cdot) - v_2(t,\cdot)  \right\|_{L^2(\Omega)}^2  \d s.
\end{align*}  
It follows that 
\begin{align*}    
\mb{E} \big\| \mf{F} (v_1(t,\cdot)) - \mf{F} (v_2(t,\cdot)) \big\|_{L^2(\Omega)}^2  &\le 2 K^2 \ms{M}_2^2 \mc{T}^2 \sup_{t \in [0,\mc{T}]} \mb{E} \left\| v_1(t,\cdot) - v_2(t,\cdot)  \right\|_{L^2(\Omega)}^2  \\
&+ 2 K^2 \dfrac{\ms{M}_2^4}{\ms{M}_1^2}  \frac{\mc{T}^{2}}{(1-\al)^2} \sup_{t \in [0,\mc{T}]} \mb{E} \left\| v_1(t,\cdot) - v_2(t,\cdot)  \right\|_{L^2(\Omega)}^2 .
\end{align*}
Thus
\begin{align*}    
\left\| \mf{F}(v_1) - \mf{F}(v_2) \right\|^2_{\mb{X}_\mc{T}} &\le 2K^2 \ms{M}_2^2 \mc{T}^2 \left( 1+ \dfrac{\ms{M}_2^2}{\ms{M}_1^2(1-\al)^2}  \right) \sup_{t \in [0,\mc{T}]} \mb{E} \left\| v_1(t,\cdot) - v_2(t,\cdot)  \right\|_{L^2(\Omega)}^2 \\
&= \left( \dfrac{K \mc{Q}_{\alpha,\mc{T}}}{2} \right)^2 \sup_{t \in [0,\mc{T}]} \mb{E} \left\| v_1(t,\cdot) - v_2(t,\cdot)  \right\|_{L^2(\Omega)}^2 .
\end{align*} 
We conclude that $\left\| \mf{F}(v_1) - \mf{F}(v_2) \right\|_{\mb{X}_\mc{T}} \leq  \dfrac{K \mc{Q}_{\alpha,\mc{T}}}{2}  \left\| v_1 - v_2 \right\|_{\mb{X}_\mc{T}}$. This completes the proof.   
\end{proof}
Note in the above result we could assume $K \in \left( 0, 2 \mc{Q}^{-1}_{\al,\mc{T}} \right) $. However $K \in \left( 0, \mc{Q}^{-1}_{\al,\mc{T}} \right) $ is needed in our next result.

\subsection{Convergence result}
Now, we are ready to state the main result of the present section in the following theorem:
\begin{theorem} Let $N_1=N_1(n_1)$, $N_2=N_2(n_2)$ be natural numbers less than $n_1,n_2$ respectively and satisfy
\begin{align} \label{gas}
\lim_{n_1 \to \infty} N_1 = \lim_{n_2 \to \infty} N_2 = \infty, \quad  {\lim_{n_1, n_2 \to \infty} } \dfrac{N_1N_2 \lambda_{N_1,N_2}^2}{n_1n_2}=0.
\end{align}   Assume the conditions of Lemma \ref{appp}, Lemma \ref{uniquelm} hold and $u \in L^\infty(0,\mc{T};H^\theta(\Omega))$. Then, the error $\left\| \widetilde {\mf{u}} ^{N_1,N_2} - u \right\|^2_{\mb{X}_\mc{T}}$ is of order
\begin{align*}    
 {\max \left\{ \dfrac{N_1N_2 }{n_1n_2}(N_1^4+N_2^4),N_1^{-2 \theta},N_2^{-2 \theta} \right\}}.
\end{align*} 
 {
\begin{remark}
If we choose 
\begin{align*}           
(N_1,N_2) = (\lfloor n_1^{1/5} \rfloor, \lfloor n_2^{1/5} \rfloor) \quad \text{or} \quad (N_1,N_2) = (\lfloor \log n_1 \rfloor, \lfloor \log n_2 \rfloor),
\end{align*} 
where we denote $\lfloor p \rfloor$ the greatest natural number less than $p$,  then $N_1,N_2$ satisfy \eqref{gas}. 
\end{remark} 
}
\begin{proof} Let 
\begin{align}   \label{vnnre}  v^{N_1,N_2}(t,x) = \mb{A}^{N_1,N_2}(t) \varphi(x) + \int\limits_{ 0 }^{ t } \mb{B}^{N_1,N_2}&(t-s) f(s,x,u(s,x)) \d s \nn \\
&-  \int\limits_{ 0 }^{ \mc{T} } \mb{D}^{N_1,N_2}(t,\mc{T}-s)  f(s,x,u(s,x)) \d s,
\end{align}
which is the truncated series of $u$. Then, we have
\begin{align*}    
\frac{1}{2}\left\| \widetilde {\mf{u}} ^{N_1,N_2}(t,\cdot) - u(t,\cdot) \right\|_{L^2(\Omega)}^2  \le  \left\| \widetilde {\mf{u}} ^{N_1,N_2}(t,\cdot) - v^{N_1,N_2}(t,\cdot) \right\|_{L^2(\Omega)}^2  +  \left\| v^{N_1,N_2}(t,\cdot) - u(t,\cdot) \right\|_{L^2(\Omega)}^2 .
\end{align*}  
\textbf{Step 1} (Estimating the error between $\widetilde {\mf{u}}^{N_1,N_2}$ and $v^{N_1,N_2}$). From \eqref{unnre} and \eqref{vnnre}, one can see that
\begin{align} \label{lastm}   
\big\| \widetilde {\mf{u}}^{N_1,N_2}(t,\cdot)- v^{N_1,N_2}(t,\cdot) \big\|_{L^2(\Omega)}  &\le \left\| \mb{A}^{N_1,N_2}(t) \left( \widetilde \varphi^{N_1,N_2} - \varphi \right)  \right\|_{L^2(\Omega)}  \nn \\
&+  \int\limits_{ 0 }^{ t }  \left\| \mb{B}^{N_1,N_2}(t-s) \big( f(s,\cdot,\widetilde{ \mf{u}}^{N_1,N_2}(s,\cdot)) -  f(s,\cdot,u(s,\cdot))  
\big)   \right\|_{L^2(\Omega)}  \d s \nn \\
&+ \int\limits_{ 0 }^{ \mc{T} }  \left\| \mb{D}^{N_1,N_2}(t,\mc{T}-s) \big( f(s,\cdot,\widetilde{ \mf{u}}^{N_1,N_2}(s,\cdot)) -  f(s,\cdot,u(s,\cdot))  
\big)  \right\|_{L^2(\Omega)}  \d s \nn \\
&=: \mc{I}_1 + \mc{I}_2 + \mc{I}_3.
\end{align} 
For $g \in L^2(\Omega)$, we have 
\begin{align*}    
\left\| \mb{A}^{N_1,N_2}(t) g \right\|_{L^2(\Omega)}^2  &\le \sum_{ j_1=1 }^{ N_1 } \sum_{ j_2=1 }^{ N_2 } \left( \frac{\ms{M}_2 (1+ \lambda_{j_1,j_2} \mathcal{T}^\al )}{\ms{M}_1} \right)^2   \left\langle g, \xi_{j_1,j_2} \right\rangle^2 \\
&\le \left( \frac{\ms{M}_2 (1+ \lambda_{N_1,N_2} \mathcal{T}^\al )}{\ms{M}_1} \right)^2 \sum_{ j_1=1 }^{ N_1 } \sum_{ j_2=1 }^{ N_2 } \left\langle g, \xi_{j_1,j_2} \right\rangle^2,
\end{align*}
and recall that (see \eqref{4e} and \eqref{e1})
\begin{align*}    
\mb{E} \mc{E}_1 = \mb{E} \Bigg[ \sum_{ j_1=1 }^{ N_1 } \sum_{ j_2=1 }^{ N_2 } \left( \left\langle \widetilde  {\varphi}^{N_1,N_2}, \xi_{j_1,j_2} \right\rangle - \varphi_{j_1,j_2} \right) ^2 \Bigg] \le \dfrac{\left( 2 \pi^2 \varepsilon^2_{\max} +  2 \mc{C}_0^2  \left\| \varphi \right\|^2_{H^\theta (\Omega)} \right) N_1 N_2 }{n_1 n_2},
\end{align*}    
Hence
\begin{align}   \label{lasth}  
\mb{E} \mc{I}_1^2 \le  \left( \frac{\ms{M}_2 (1+ \lambda_{N_1,N_2} \mathcal{T}^\al )}{\ms{M}_1} \right)^2 \mb{E} \mc{E}_1 \le \left( \frac{\ms{M}_2 (1+ \lambda_{N_1,N_2} \mathcal{T}^\al )}{\ms{M}_1} \right)^2 \dfrac{\left( 2 \pi^2 \varepsilon^2_{\max} +  2 \mc{C}_0^2  \left\| \varphi \right\|^2_{H^\theta (\Omega)} \right) N_1 N_2 }{n_1 n_2}.
\end{align}
For the term $\mc{I}_2+\mc{I}_3$, by a similar technique as in the proof of Lemma \ref{uniquelm}, one arrives at
\begin{align}   \label{lastb}  
\mb{E} \left( \mc{I}_2+ \mc{I}_3 \right)^2 \le 2K^2 \ms{M}_2^2 \mc{T}^2 \left( 1+ \dfrac{\ms{M}_2^2}{\ms{M}_1^2(1-\al)^2}  \right) \sup_{t \in [0,\mc{T}]} \mb{E} \left\| \widetilde{ \mf{u}}^{N_1,N_2}(t,\cdot) - u(t,\cdot)  \right\|_{L^2(\Omega)}^2 .
\end{align} 
Combining \eqref{lastm}-\eqref{lastb}, one obtains
\begin{align*}    
\mb{E} \big\| \widetilde {\mf{u}}^{N_1,N_2}(t,\cdot)&- v^{N_1,N_2}(t,\cdot) \big\|^2  \\
&\le 4 \left( \frac{\ms{M}_2 (1+ \lambda_{N_1,N_2} \mathcal{T}^\al )}{\ms{M}_1} \right)^2 \dfrac{\left( \pi^2 \varepsilon^2_{\max} +  \mc{C}_0^2  \left\| \varphi \right\|^2_{H^\theta (\Omega)} \right) N_1 N_2 }{n_1 n_2} \\
&+4K^2 \ms{M}_2^2 \mc{T}^2 \left( 1+ \dfrac{\ms{M}_2^2}{\ms{M}_1^2(1-\al)^2}  \right)  \left\| \widetilde{ \mf{u}}^{N_1,N_2} - u \right\|^2_{\mb{X}_{\mc{T}}}.
\end{align*}  
\textbf{Step 2} (Estimating the error between $v^{N_1,N_2}$ and $u$). From \eqref{opereq} and \eqref{vnnre}, one can see that
\begin{align*}
\left\| v^{N_1,N_2}(t,\cdot) - u(t,\cdot) \right\|_{L^2(\Omega)}^2  =    
\sum_{ j_1=1 }^{ N_1 } \sum_{ j_2=N_2+1 }^{ \infty } u_{j_1,j_2}^2(t) + \sum_{ j_1=N_1+1 }^{ \infty } \sum_{ j_2=1 }^{ N_2 } u_{j_1,j_2}^2(t)  + \sum_{ j_1=N_1+1 }^{ \infty } \sum_{ j_2=N_2+1 }^{ \infty } u_{j_1,j_2}^2(t).
\end{align*} 
The quantity above can be estimated in exactly the same way as in Part B in the proof of Lemma \ref{appp}. In this way, one gets
\begin{align*}    
\left\| v^{N_1,N_2} - u \right\|_{L^2(\Omega)}^2  &\le 2 \left[ (N_1+1)^{-2 \theta} + (N_2+1)^{-2 \theta} \right] \left\| u(t,\cdot) \right\|^2_{H^\theta(\Omega)} \\
&\le 2 \left[ (N_1+1)^{-2 \theta} + (N_2+1)^{-2 \theta} \right] \left\| u \right\|^2_{L^\infty(0,\mc{T};H^\theta(\Omega))}. 
\end{align*}
Now, using the results of the two steps, we deduce that
\begin{align*}    
\frac{1}{2}\mb{E} \left\| \widetilde {\mf{u}} ^{N_1,N_2}(t,\cdot) - u(t,\cdot) \right\|_{L^2(\Omega)}^2  &\le 4 \left( \frac{\ms{M}_2 (1+ \lambda_{N_1,N_2} \mathcal{T}^\al )}{\ms{M}_1} \right)^2 \dfrac{\left( \pi^2 \varepsilon^2_{\max} +  \mc{C}_0^2  \left\| \varphi \right\|^2_{H^\theta (\Omega)} \right) N_1 N_2 }{n_1 n_2} \\
&+4K^2 \ms{M}_2^2 \mc{T}^2 \left( 1+ \dfrac{\ms{M}_2^2}{\ms{M}_1^2(1-\al)^2}  \right) \left\| \widetilde{ \mf{u}}^{N_1,N_2} - u \right\|^2_{\mb{X}_{\mc{T}}} \\
&+ 2 \left[ (N_1+1)^{-2 \theta} + (N_2+1)^{-2 \theta} \right] \left\| u \right\|^2_{L^\infty(0,\mc{T};H^\theta(\Omega))},
\end{align*} 
which gives us
\begin{align*}    
 \frac{ 1 - K^2 \mc{Q}^2_{\al, \mc{T}}}{2} \left\| \widetilde{ \mf{u}}^{N_1,N_2} - u \right\|^2_{\mb{X}_{\mc{T}}} \le &4\left( \frac{\ms{M}_2 (1+ \lambda_{N_1,N_2} \mathcal{T}^\al )}{\ms{M}_1} \right)^2 \dfrac{\left( \pi^2 \varepsilon^2_{\max} +  \mc{C}_0^2  \left\| \varphi \right\|^2_{H^\theta (\Omega)} \right) N_1 N_2 }{n_1 n_2} \\
&+ 2 \left[ (N_1+1)^{-2 \theta} + (N_2+1)^{-2 \theta} \right] \left\| u \right\|^2_{L^\infty(0,\mc{T};H^\theta(\Omega))}.
\end{align*} 
Hence, we conclude that $\left\| \widetilde{ \mf{u}}^{N_1,N_2} - u \right\|^2_{\mb{X}_{\mc{T}}}$ is of order
\begin{align*}    
	 {\max \left\{ \dfrac{N_1N_2 }{n_1n_2}(N_1^4+N_2^4),N_1^{-2 \theta},N_2^{-2 \theta} \right\}}.
\end{align*}      
\end{proof}  
\end{theorem}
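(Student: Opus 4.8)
The plan is to compare the regularized solution $\widetilde{\mf{u}}^{N_1,N_2}$ not directly with $u$, but with an intermediate object: the Fourier truncation $v^{N_1,N_2}$ of the exact solution, obtained by feeding the \emph{true} final data $\varphi$ and the \emph{exact} $u$ into the truncated integral representation, namely the function defined in \eqref{vnnre}. Using $(a+b)^2\le 2(a^2+b^2)$ I would split
\begin{align*}
\tfrac{1}{2}\big\| \widetilde{\mf{u}}^{N_1,N_2}(t,\cdot)-u(t,\cdot) \big\|_{L^2(\Omega)}^2 \le \big\| \widetilde{\mf{u}}^{N_1,N_2}(t,\cdot)-v^{N_1,N_2}(t,\cdot) \big\|_{L^2(\Omega)}^2 + \big\| v^{N_1,N_2}(t,\cdot)-u(t,\cdot) \big\|_{L^2(\Omega)}^2,
\end{align*}
so that the two sources of error, the noisy discrete data and the spectral truncation, are treated separately.

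For the first term (Step 1), subtracting \eqref{unnre} from \eqref{vnnre} and using the triangle inequality produces three contributions $\mc{I}_1+\mc{I}_2+\mc{I}_3$ as in \eqref{lastm}. The term $\mc{I}_1$ carries the data error through the operator $\mb{A}^{N_1,N_2}(t)$; the decisive estimate here is the operator bound $\| \mb{A}^{N_1,N_2}(t)g \|_{L^2(\Omega)} \le \frac{\ms{M}_2(1+\lambda_{N_1,N_2}\mc{T}^\al)}{\ms{M}_1}\big( \sum_{j_1\le N_1, j_2\le N_2} \langle g,\xi_{j_1,j_2}\rangle^2 \big)^{1/2}$, which follows from Lemma \ref{lm11}, combined with the mean-square data estimate $\mb{E}\mc{E}_1$ already computed in Lemma \ref{appp}. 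The remaining terms $\mc{I}_2+\mc{I}_3$ measure the difference of the nonlinear source at $\widetilde{\mf{u}}^{N_1,N_2}$ and at $u$; using Lemma \ref{lmop}, the Lipschitz hypothesis \eqref{aspt} and H\"older's inequality exactly as in the proof of Lemma \ref{uniquelm}, I expect this to be bounded by $\tfrac{K^2\mc{Q}_{\al,\mc{T}}^2}{2}\| \widetilde{\mf{u}}^{N_1,N_2}-u \|_{\mb{X}_\mc{T}}^2$.

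For the second term (Step 2), I would expand $v^{N_1,N_2}-u$ in the eigenbasis $\{\xi_{j_1,j_2}\}$. Since $v^{N_1,N_2}$ retains only the modes with $j_1\le N_1$ and $j_2\le N_2$, the difference is precisely the tail of the Fourier series of $u$, which is controlled by the regularity assumption $u \in L^\infty(0,\mc{T};H^\theta(\Omega))$ in exactly the manner of Part B in the proof of Lemma \ref{appp}, yielding a bound of order $\big[ (N_1+1)^{-2\theta}+(N_2+1)^{-2\theta} \big]\| u \|_{L^\infty(0,\mc{T};H^\theta(\Omega))}^2$.

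Finally I would apply $\mb{E}$ and then the supremum over $t\in[0,\mc{T}]$ to the split inequality. The nonlinear contribution from Step 1 leaves a term proportional to $K^2\mc{Q}_{\al,\mc{T}}^2\| \widetilde{\mf{u}}^{N_1,N_2}-u \|_{\mb{X}_\mc{T}}^2$ on the right, which must be absorbed into the left-hand side. This absorption is the crux: it is legitimate only because the hypothesis $K\in(0,\mc{Q}_{\al,\mc{T}}^{-1})$ (rather than the weaker $K<2\mc{Q}_{\al,\mc{T}}^{-1}$ sufficient for existence) makes the resulting coefficient $\tfrac{1-K^2\mc{Q}_{\al,\mc{T}}^2}{2}$ strictly positive. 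After dividing by this constant, the polynomial amplification factor $(1+\lambda_{N_1,N_2}\mc{T}^\al)^2 \sim \lambda_{N_1,N_2}^2 = (N_1^2+N_2^2)^2$ multiplying the data-error rate $\tfrac{N_1N_2}{n_1n_2}$ delivers the first entry $\tfrac{N_1N_2}{n_1n_2}(N_1^4+N_2^4)$ of the stated maximum, while Step 2 supplies $N_1^{-2\theta}$ and $N_2^{-2\theta}$. The main obstacle is the interaction between this polynomial blow-up of the inverse operator and the control of the data error; condition \eqref{gas} is exactly what forces $\tfrac{N_1N_2\lambda_{N_1,N_2}^2}{n_1n_2}\to 0$, so the regularized family converges despite the ill-posedness exhibited in the Example.
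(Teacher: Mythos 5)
Your proposal follows essentially the same route as the paper's own proof: the same decomposition through the truncated intermediate solution $v^{N_1,N_2}$, the same operator bound on $\mb{A}^{N_1,N_2}(t)$ combined with the data estimate $\mb{E}\mc{E}_1$ from Lemma \ref{appp}, the same Lipschitz/H\"older treatment of $\mc{I}_2+\mc{I}_3$, the same tail estimate for the truncation error, and the same absorption step using $K<\mc{Q}_{\al,\mc{T}}^{-1}$. It is correct, and you have also correctly identified why the stronger hypothesis on $K$ (beyond what existence requires) is the crux of the absorption argument.
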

\section{Backward problem in the  {multi-dimensional case}} \label{s5}
Based on Subsection \ref{subsec33}, we claim that the multi-dimensional backward problem \eqref{pr} with discrete data is not well-posed. Thus, a regularized method is required to construct a stable approximate solution. To do this, in next subsection, we establish an approximation for the final data $\varphi$.
\subsection{Estimator for $\varphi$ in the  {multi-dimensional case}}
For any positive constant   $\gamma_{\mbf{n}}=\gamma_{n_1,n_2,\dots,n_d}$ depending on $\mbf{n}=(n_1,n_2,\dots,n_d)$,  we define
\begin{align}  \label{wgm}  
\mb{W}_{\gamma_{\mbf{n}}} = \left\{ \mbf{j} = (j_1,j_2,\dots,j_d) \in \mb{N}^d: |\mbf{j}|^2 = \sum_{ i=1 }^{ d } j_i^2 \le \gamma_{\mbf{n}} \right\}.
\end{align} 
 {For $\gamma_{\mbf{n}}$ satisfying $\lim_{|\mbf{n}| \to \infty} \gamma_{\mbf{n}}=\infty$, we define an approximation for $\varphi$ as follows 
\begin{align}   \label{estmul}  \widehat{\varphi}^{\gamma_{\mbf{n}} } = \sum_{\mbf{j} \in \mb{W}_{\gamma_{\mbf{n}}}} \Bigg[
\dfrac{\pi^d}{\prod_{i=1}^d n_i} \sum_{ k_1=1 }^{ n_1 } \sum_{ k_2=1 }^{ n_2 } \dots \sum_{ k_d=1 }^{ n_d } \Phi^{obs}_{\mbf{k}} \xi_{\mbf{j}}(x_{\bf{k}})
\Bigg] \xi_{\mbf{j}}.
\end{align} }
\begin{theorem}  \label{maintheomul}
Let $\mu=(\mu_1,\mu_2,\dots,\mu_d) \in \mb{R}^d$ in which $\mu_k > \frac{1}{2}$ for any $k=1,2,\dots,d$ and $\mu_{\circ} \in \mb{R}^+$ such that $\mu_{\circ} \ge d \max \left( \mu_1,\mu_2,\dots, \mu_d \right) $. Then we have
\begin{itemize}
\item [(a)] \textbf{Error estimate in $L^2(\Omega)$} (see Theorem 2.1 of \cite{Kir}). If $\varphi \in H^{\mu_\circ}(\Omega)$ then
	\begin{align*}    
	\mb{E} \left\| \widehat{\varphi}^{\gamma_{\mbf{n}} }  - \varphi \right\|^2_{{L}^2(\Omega)} \le  {\overline{C} (\mu,\varphi)} \gamma_{\mbf{n}}^{d/2} \prod_{i=1}^d n_i^{-4\mu_i} + 4 \gamma_{\mbf{n}}^{-\mu_\circ} \left\| \varphi \right\|^2_{H^{\mu_0}(\Omega)},
	\end{align*} 
	where 
	\begin{align*}    
 {\overline{C} (\mu,\varphi)} = 8 \pi^d \varepsilon_{\max}^2 \dfrac{2 \pi^{\frac{d}{2}}}{d \Gamma \left( \frac{d}{2} \right)} + \dfrac{16~  {{C}^2 (\mu)} \pi^{\frac{d}{2}}}{d \Gamma \left( \frac{d}{2} \right) } \left\| \varphi \right\|^2_{H^{\mu_\circ}(\Omega)},
	\end{align*}
	with 
	\begin{align*}    
 {C (\mu)} = d^{\frac{-\max(\mu_1,\dots,\mu_d)}{2}}  \sum_{\mbf{l} \in \mb{N}^d, |\mbf{l}| \neq 0 } \prod_{i=1}^d (2l_i-1)^{-2 \mu_i} .
	\end{align*}     
\item [(b)] \textbf{Error estimate in $H^\sigma(\Omega)$}. If there exists a constant $\sigma>0$ such that $\varphi \in H^{\mu_\circ+\sigma}(\Omega)$ then
\begin{align*}    
\mb{E} \big\| \widehat{\varphi}^{\gamma_{\mbf{n}}} -\varphi \big\|^2_{H^{\sigma}(\Omega)} &\le  {\overline{C} (\mu,\varphi)} \frac{\gamma_{\mbf{n}}^{\sigma + \frac{d}{2}}}{4}  \prod_{i=1}^{d} (n_i)^{-4 \mu_i} +  \gamma_{\mbf{n}}^{-\mu_\circ} \left\| \varphi \right\|^2_{H^{\mu_\circ+ \sigma}(\Omega)}  .
\end{align*} 
\end{itemize}
\end{theorem}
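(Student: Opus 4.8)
The plan is to reproduce, in $d$ dimensions, the mechanism of the two-dimensional Lemma \ref{appp}, working throughout in the orthonormal basis $\{\xi_{\mbf{j}}\}$. Denoting by $\widehat{\varphi}_{\mbf{j}}$ the bracketed empirical coefficient in \eqref{estmul}, Parseval's identity yields the exact split
\begin{align*}
\big\| \widehat{\varphi}^{\gamma_{\mbf{n}}} - \varphi \big\|^2_{L^2(\Omega)} = \sum_{\mbf{j} \in \mb{W}_{\gamma_{\mbf{n}}}} \big( \widehat{\varphi}_{\mbf{j}} - \varphi_{\mbf{j}} \big)^2 + \sum_{\mbf{j} \notin \mb{W}_{\gamma_{\mbf{n}}}} \varphi_{\mbf{j}}^2 .
\end{align*}
For every retained index I would substitute the model \eqref{model} and decompose $\widehat{\varphi}_{\mbf{j}} - \varphi_{\mbf{j}}$ into a stochastic part $\frac{\pi^d}{\prod_i n_i}\sum_{\mbf{k}} \varepsilon_{\mbf{k}} \ms{W}_{\mbf{k}} \xi_{\mbf{j}}(x_{\mbf{k}})$ and a deterministic aliasing part $\Upsilon_{\mbf{j}} := \frac{\pi^d}{\prod_i n_i}\sum_{\mbf{k}} \varphi(x_{\mbf{k}}) \xi_{\mbf{j}}(x_{\mbf{k}}) - \varphi_{\mbf{j}}$, and control $(\widehat{\varphi}_{\mbf{j}} - \varphi_{\mbf{j}})^2$ by twice the sum of their squares; this isolates exactly the two quantities making up $\overline{C}(\mu,\varphi)$ together with the tail.

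I would then estimate the three pieces separately. For the tail, $|\mbf{j}|^2 = \lambda_{\mbf{j}} > \gamma_{\mbf{n}}$ on the complement of $\mb{W}_{\gamma_{\mbf{n}}}$, so inserting $1 \le (\lambda_{\mbf{j}}/\gamma_{\mbf{n}})^{\mu_\circ}$ bounds $\sum_{\mbf{j}\notin\mb{W}_{\gamma_{\mbf{n}}}}\varphi_{\mbf{j}}^2$ (up to the constant in the statement) by $\gamma_{\mbf{n}}^{-\mu_\circ}\|\varphi\|^2_{H^{\mu_\circ}}$. For the stochastic part I would use the mutual independence of the $\ms{W}_{\mbf{k}}$, $\mb{E}\ms{W}_{\mbf{k}}^2 = 1$ and $\varepsilon_{\mbf{k}} \le \varepsilon_{\max}$, together with the diagonal case $\sum_{\mbf{k}}\xi_{\mbf{j}}^2(x_{\mbf{k}}) = \pi^{-d}\prod_i n_i$ of the $d$-dimensional generalization of Lemma \ref{lm22}, to obtain a per-mode bound; summing over $\mbf{j} \in \mb{W}_{\gamma_{\mbf{n}}}$ reduces to counting the integer points of $\mb{W}_{\gamma_{\mbf{n}}}$, which I would dominate by the volume of the Euclidean ball of radius $\sqrt{\gamma_{\mbf{n}}}$, producing the factor $\frac{2\pi^{d/2}}{d\Gamma(d/2)}\gamma_{\mbf{n}}^{d/2}$ and hence the $\varepsilon_{\max}^2$-summand of $\overline{C}(\mu,\varphi)$. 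For the aliasing part I would first derive the explicit folding identity for $\Upsilon_{\mbf{j}}$ as a signed sum of the coefficients $\varphi_{\mbf{m}}$ over the reflected/translated frequencies $m_i = 2 l_i n_i \pm j_i$ (the $d$-dimensional analogue of \eqref{upid}), read off from the full orthogonality table generalizing Lemma \ref{lm22}.

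Part (b) then follows from the same decomposition carried out in the weighted norm, i.e.\ with each mode multiplied by $\lambda_{\mbf{j}}^{\sigma}$. Since every retained index satisfies $\lambda_{\mbf{j}} \le \gamma_{\mbf{n}}$, both the stochastic and the aliasing in-band sums acquire precisely the extra factor $\gamma_{\mbf{n}}^{\sigma}$, while the tail is re-summed against the stronger norm $H^{\mu_\circ + \sigma}$; the per-mode bounds from part (a) can be reused verbatim.

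The main obstacle is the aliasing estimate, specifically turning the isotropic Sobolev decay $|\varphi_{\mbf{m}}| \le |\mbf{m}|^{-\mu_\circ}\|\varphi\|_{H^{\mu_\circ}}$ into the anisotropic product form $\prod_i (2l_i-1)^{-2\mu_i} n_i^{-2\mu_i}$ responsible for the constant $C(\mu)$. In the admissible regime each folded coordinate obeys $m_i \ge (2l_i - 1) n_i$, and the hypothesis $\mu_\circ \ge d\max_k \mu_k$ is exactly what makes a power-mean inequality applied to $|\mbf{m}|^2 = \sum_i m_i^2$ distribute the decay across coordinates with the correct per-coordinate exponents, the prefactor $d^{-\max_k\mu_k/2}$ in $C(\mu)$ being a byproduct. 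One then bounds $|\Upsilon_{\mbf{j}}| \le C(\mu)\prod_i n_i^{-2\mu_i}\|\varphi\|_{H^{\mu_\circ}}$ uniformly in $\mbf{j}$, squares, and sums against the same lattice-point count to obtain the $\gamma_{\mbf{n}}^{d/2}\prod_i n_i^{-4\mu_i}$ behaviour and the $C(\mu)^2\|\varphi\|^2$-summand of $\overline{C}(\mu,\varphi)$. The delicate bookkeeping is tracking the signs in the folding identity and verifying convergence of the multiple series $\sum_{\mbf{l}}\prod_i(2l_i-1)^{-2\mu_i}$, which is finite precisely because each $\mu_k > \tfrac12$.
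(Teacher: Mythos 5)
Your decomposition is exactly the one the paper uses: Parseval splits the error into an in-band part and a tail, the in-band part splits into a stochastic term and a deterministic aliasing term via the model \eqref{model} and the folding identity, the stochastic term is controlled by discrete orthogonality plus a lattice-point count of $\mb{W}_{\gamma_{\mbf{n}}}$ dominated by the volume $\frac{2\pi^{d/2}}{d\Gamma(d/2)}\gamma_{\mbf{n}}^{d/2}$ of the ball, the tail is handled by inserting $(\lambda_{\mbf{j}}/\gamma_{\mbf{n}})^{\mu_\circ}\ge 1$, and part (b) is the same computation with the weight $\lambda_{\mbf{j}}^{\sigma}\le\gamma_{\mbf{n}}^{\sigma}$ pulled out on the retained modes. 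The only substantive difference is one of self-containedness: the paper does not prove part (a) at all (it is quoted verbatim from Theorem 2.1 of \cite{Kir}), and for part (b) it imports the folding identity (Lemma 2.3 of \cite{Kir}), the discrete orthogonality (Lemma 2.2 of \cite{Kir}), the cardinality bound ((2.30) of \cite{Kir}) and, crucially, the per-mode aliasing bound ((2.37) of \cite{Kir}) as black boxes, whereas you propose to re-derive all of these as $d$-dimensional analogues of Lemmas \ref{lm22} and \ref{appp}. That plan is sound, and you correctly identify the aliasing estimate as the delicate point; the one place where your bookkeeping needs care is the exponent: with this paper's convention $\|\varphi\|^2_{H^{\theta}}=\sum_{\mbf{j}}\lambda_{\mbf{j}}^{\theta}\varphi_{\mbf{j}}^2$ one gets $|\varphi_{\mbf{p}}|\le|\mbf{p}|^{-\mu_\circ}\|\varphi\|_{H^{\mu_\circ}}$, and the power-mean step under the stated hypothesis $\mu_\circ\ge d\max_k\mu_k$ delivers $\prod_i((2l_i-1)n_i)^{-\mu_i}$ per folded frequency, i.e.\ $\prod_i n_i^{-2\mu_i}$ only after matching the Sobolev normalization used in \cite{Kir}; you should either strengthen the hypothesis to $\mu_\circ\ge 2d\max_k\mu_k$ or track the norm convention explicitly when reproducing the constant $C(\mu)$.
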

\begin{proof} [Proof of Part (b)] 
First, by Lemma 2.3 of \cite{Kir}, we have
\begin{align*}    
\varphi_{\mbf{j}} = \dfrac{\pi^d}{\prod_{i=1}^d n_i} \sum_{ k_1=1 }^{ n_1 } \sum_{ k_2=1 }^{ n_2 } \dots \sum_{ k_d=1 }^{ n_d } \varphi(x_{\mbf{k}}) \xi_{\mbf{j}}(x_{\bf{k}}) - \sum_{ \mbf{p} = 2 \mbf{l} \cdot \mbf{n} \pm \mbf{j}  }^{ \mbf{j} \in \mb{N}^d, l_1^2+\dots+l_d^2 \neq 0 } \varphi_{\mbf{p} }. 
\end{align*} 
Thus, we get 
\begin{align*}    
\widehat{\varphi}^{\gamma_{\mbf{n}}}&(x) -\varphi(x)  \\ &= \sum_{ \mbf{j} \in \mb{W}_{\gamma_{\mbf{n}}} } \Bigg[  
\dfrac{\pi^d}{\prod_{i=1}^d n_i} \sum_{ k_1=1 }^{ n_1 } \sum_{ k_2=1 }^{ n_2 } \dots \sum_{ k_d=1 }^{ n_d } \Phi^{obs}_{\mbf{k}} \xi_{\mbf{j}}(x_{\bf{k}}) - \varphi_{\mbf{j}} 
\Bigg] \xi_{\mbf{j}}(x) - \sum_{ \mbf{j} \notin \mb{W}_{\gamma_{\mbf{n}}} } \varphi_{\mbf{j}} \xi_{\mbf{j}}(x) \\
&= \sum_{ \mbf{j} \in \mb{W}_{\gamma_{\mbf{n}}} } \Bigg[ 
\dfrac{\pi^d}{\prod_{i=1}^d n_i} \sum_{ k_1=1 }^{ n_1 } \sum_{ k_2=1 }^{ n_2 } \dots \sum_{ k_d=1 }^{ n_d } \varepsilon_{\mbf{k}} \ms{W}_{\mbf{k}} \varphi_{\mbf{j}}(x_{\bf{k}}) + \sum_{ \mbf{p} = 2 \mbf{l} \cdot \mbf{n} \pm \mbf{j}  }^{ \mbf{j} \in \mb{N}^d, l_1^2+\dots+l_d^2 \neq 0 } \varphi_{\mbf{p} }
\Bigg] \xi_{\mbf{j}}(x) - \sum_{ \mbf{j} \notin \mb{W}_{\gamma_{\mbf{n}}} } \varphi_{\mbf{j}} \xi_{\mbf{j}}(x).
\end{align*} 
It follows that
\begin{align}   \label{ccc} 
&~\quad \big\| \widehat{\varphi}^{\gamma_{\mbf{n}}} -\varphi \big\|^2_{H^{\sigma}(\Omega)} \nn \\
&= \sum_{ \mbf{j} \in \mb{W}_{\gamma_{\mbf{n}}} } \lambda_{\mbf{j} }^{\sigma} \Bigg[ 
\dfrac{\pi^d}{\prod_{i=1}^d n_i} \sum_{ k_1=1 }^{ n_1 } \sum_{ k_2=1 }^{ n_2 } \dots \sum_{ k_d=1 }^{ n_d } \varepsilon_{\mbf{k}} \ms{W}_{\mbf{k}} \varphi_{\mbf{j}}(x_{\bf{k}}) + \sum_{ \mbf{p} = 2 \mbf{l} \cdot \mbf{n} \pm \mbf{j}  }^{ \mbf{j} \in \mb{N}^d, l_1^2+\dots+l_d^2 \neq 0 } \varphi_{\mbf{p} }
\Bigg]^2 + \sum_{ \mbf{j} \notin \mb{W}_{\gamma_{\mbf{n}}} }  \lambda_{\mbf{j} }^{\sigma} \varphi_{\mbf{j}}^2 \nn \\
&\le 2 \sum_{ \mbf{j} \in \mb{W}_{\gamma_{\mbf{n}}} } \lambda_{\mbf{j} }^{\sigma} \Bigg[ 
\dfrac{\pi^d}{\prod_{i=1}^d n_i} \sum_{ k_1=1 }^{ n_1 } \sum_{ k_2=1 }^{ n_2 } \dots \sum_{ k_d=1 }^{ n_d } \varepsilon_{\mbf{k}} \ms{W}_{\mbf{k}} \varphi_{\mbf{j}}(x_{\bf{k}})
\Bigg]^2 + 2\sum_{ \mbf{j} \in \mb{W}_{\gamma_{\mbf{n}}} } \lambda_{\mbf{j} }^{\sigma} \Bigg[ \sum_{ \mbf{p} = 2 \mbf{l} \cdot \mbf{n} \pm \mbf{j}  }^{ \mbf{j} \in \mb{N}^d, l_1^2+\dots+l_d^2 \neq 0 } \varphi_{\mbf{p} } \Bigg]^2 + \sum_{ \mbf{j} \notin \mb{W}_{\gamma_{\mbf{n}}} }  \lambda_{\mbf{j} }^{\sigma} \varphi_{\mbf{j}}^2 \nn \\
&=: \ms{C}_{I} + \ms{C}_{II} + \ms{C}_{III}. 
\end{align} 
From Lemma 2.2 of \cite{Kir} and the property of  $\ms{W}_{\mbf{k}}$, the first term can be estimated as follows
\begin{align*}    
\mb{E} \ms{C}_{I} &\le  2 \sum_{ \mbf{j} \in \mb{W}_{\gamma_{\mbf{n}}} } \gamma_{\mbf{n}} ^{\sigma}  
\dfrac{\pi^{2d}}{ \left( \prod_{i=1}^d n_i \right)^2  } \mb{E} \Bigg[ \sum_{ k_1=1 }^{ n_1 } \sum_{ k_2=1 }^{ n_2 } \dots \sum_{ k_d=1 }^{ n_d } \varepsilon_{\mbf{k}} \ms{W}_{\mbf{k}} \varphi_{\mbf{j}}(x_{\bf{k}})
\Bigg]^2 \\
&\le  \dfrac{2 \pi^{2d}}{ \left( \prod_{i=1}^d n_i \right)^2  } \gamma_{\mbf{n}} ^{\sigma} \varepsilon^2_{\max} \text{card} \left( \mb{W}_{\gamma_{\mbf{n}}} \right) \sum_{ k_1=1 }^{ n_1 } \sum_{ k_2=1 }^{ n_2 } \dots \sum_{ k_d=1 }^{ n_d } \varphi^2_{\mbf{j}}(x_{\bf{k}})  \\
&= \dfrac{2 \pi^{d}}{ \prod_{i=1}^d n_i  } \gamma_{\mbf{n}} ^{\sigma} \varepsilon^2_{\max} \text{card} \left( \mb{W}_{\gamma_{\mbf{n}}} \right) .
\end{align*} 
In addition, using the inequality (2.30) of \cite{Kir}  that
$\text{card} \left( \mb{W}_{\gamma_{\mbf{n}}} \right) \le \dfrac{2 \pi^{\frac{d}{2}}} {d \Gamma \big( \pi^{\frac{d}{2}} \big)} \gamma_{\mbf{n}}^{\frac{d}{2}}$, 
we deduce that
\begin{align}   \label{c1} 
\mb{E} \ms{C}_{I}  \le  \varepsilon^2_{\max} \dfrac{4 \pi^{\frac{3d}{2}}} {d \Gamma \big( \pi^{\frac{d}{2}} \big) \prod_{i=1}^d n_i} \gamma_{\mbf{n}}^{\sigma+\frac{d}{2}}.
\end{align}  
From (2.37) of \cite{Kir}, the second term can be estimated as follows
\begin{align}   \label{c2} 
\ms{C}_{II} &=2\sum_{ \mbf{j} \in \mb{W}_{\gamma_{\mbf{n}}} } \lambda_{\mbf{j} }^{\sigma} \Bigg[ \sum_{ \mbf{p} = 2 \mbf{l} \cdot \mbf{n} \pm \mbf{j}  }^{ \mbf{j} \in \mb{N}^d, l_1^2+\dots+l_d^2 \neq 0 } \varphi_{\mbf{p} } \Bigg]^2 \nn \\
&\le 2  \gamma_{\mbf{n}} ^{\sigma}  {C^2 (\mu)} \left\| \varphi \right\|^2_{H^{\mu_{\circ}}(\Omega)}  \prod_{i=1}^{d} (n_i)^{-4 \mu_i} \text{card} (\mb{W}_{\gamma_{\mbf{n}} })  \le 2  \gamma_{\mbf{n}} ^{\sigma+\frac{d}{2}}  {C^2 (\mu)} \left\| \varphi \right\|^2_{H^{\mu_{\circ}}(\Omega)} \dfrac{2 \pi^{\frac{d}{2}}} {d \Gamma \big( \pi^{\frac{d}{2}} \big)} \prod_{i=1}^{d} (n_i)^{-4 \mu_i}  .
\end{align}  
For the last term, it is clear that
\begin{align}  \label{c3}  
\ms{C}_{III} = \sum_{ \mbf{j} \notin \mb{W}_{\gamma_{\mbf{n}}} } \lambda_{\mbf{j} }^{-\mu_\circ} \lambda_{\mbf{j} }^{\mu_\circ+ \sigma} \varphi_{\mbf{j}}^2 \le \gamma_{\mbf{n}}^{-\mu_\circ} \left\| \varphi \right\|^2_{H^{\mu_\circ+ \sigma}(\Omega)}. 
\end{align}
Combining \eqref{ccc}-\eqref{c3}, we conclude that
\begin{align*}    
\mb{E} \big\| \widehat{\varphi}^{\gamma_{\mbf{n}}} -\varphi \big\|^2_{H^{\sigma}(\Omega)} &\le \varepsilon^2_{\max} \dfrac{4 \pi^{\frac{3d}{2}}} {d \Gamma \big( \pi^{\frac{d}{2}} \big) \prod_{i=1}^d n_i} \gamma_{\mbf{n}}^{\sigma+\frac{d}{2}} +  \gamma_{\mbf{n}}^{-\mu_\circ} \left\| \varphi \right\|^2_{H^{\mu_\circ+ \sigma}(\Omega)} \\
&+ 2  \gamma_{\mbf{n}} ^{\sigma+\frac{d}{2}}  {C^2 (\mu)} \left\| \varphi \right\|^2_{H^{\mu_{\circ}}(\Omega)} \dfrac{2 \pi^{\frac{d}{2}}} {d \Gamma \big( \pi^{\frac{d}{2}} \big)} \prod_{i=1}^{d} (n_i)^{-4 \mu_i} \\
&\le  {\overline{C} (\mu,\varphi)} \frac{\gamma_{\mbf{n}}^{\sigma + \frac{d}{2}}}{4}  \prod_{i=1}^{d} (n_i)^{-4 \mu_i} +  \gamma_{\mbf{n}}^{-\mu_\circ} \left\| \varphi \right\|^2_{H^{\mu_\circ+ \sigma}(\Omega)},
\end{align*}     
which shows that $\mb{E} \big\| \widehat{\varphi}^{\gamma_{\mbf{n}}} -\varphi \big\|^2_{H^{\sigma}(\Omega)}$ is of order 
$   
\max \left(  \gamma_{\mbf{n}}^{\sigma+\frac{d}{2}} \prod_{i=1}^{d} (n_i)^{-4 \mu_i}, \gamma_{\mbf{n}}^{-\mu_\circ} \right).
$  
\end{proof}
\subsection{Quasi-boundary value method and regularized solution}

Using the estimator for $\varphi$ as in \eqref{estmul}, we give the regularized problem as follows
\begin{align} \label{prre} \begin{cases}   \dfrac{\partial}{\partial t} \widehat{\mf{u}}^{\gamma_{\mbf{n}},\vartheta_{\mbf{n}}  } (t,x) + \dfrac{\partial^{1-\alpha}}{\partial t}  \Delta \widehat{\mf{u}}^{\gamma_{\mbf{n}},\vartheta_{\mbf{n}}  }  (t,x) = f \left(t,x,\widehat{\mf{u}}^{\gamma_{\mbf{n}},\vartheta_{\mbf{n}}  } (t,x) \right), &(t,x)  \in (0,\mc{T})  \times  \Omega, \\
\widehat{\mf{u}}^{\gamma_{\mbf{n}},\vartheta_{\mbf{n}}  } (t,x) = 0, &(t,x) \in (0,\mc{T})  \times \partial \Omega, \\
\widehat{\mf{u}}^{\gamma_{\mbf{n}},\vartheta_{\mbf{n}}  } (\mc{T},x) + \vartheta_{\mbf{n}} \widehat{\mf{u}}^{\gamma_{\mbf{n}},\vartheta_{\mbf{n}}  } (0,x)= \widehat {\varphi}^{\gamma_{\mbf{n}} }(x), &x \in \Omega,
\end{cases}
\end{align} 
where $\vartheta_{\mbf{n}}>0$ depends on $\mbf{n}=(n_1,n_2,\dots,n_d)$ and satisfies
$   
\lim_{|\mbf{n}| \to \infty} \vartheta_{\mbf{n}} = 0
$, $\gamma_{\mbf{n}} $ fulfills Theorem \ref{maintheomul}. Here, the basic idea is to replace the final value data $\varphi$ by its approximation $\widehat {\varphi}_{\gamma_{\mbf{n}} }$ and  add the quantity $\vartheta_{\mbf{n}} \widehat{\mf{u}}^{\gamma_{\mbf{n}},\vartheta_{\mbf{n}}  } (0,x)$ into the left-hand side of the last equation. Then, using the result \eqref{ummm}, one has
\begin{align*}  
\widehat{\mf{u}}_{\mbf{j}}^{\gamma_{\mbf{n}},\vartheta_{\mbf{n}}  }  (t) = E_{\al,1} \big(- \lambda_{\mbf{j}} t^\al \big) \widehat{\mf{u}}_{\mbf{j}}^{\gamma_{\mbf{n}},\vartheta_{\mbf{n}}  }  (0)  + \int\limits_{ 0 }^{ t } E_{\al,1} \big( -\lambda_{\mbf{j}} (t-s)^\al \big)  f_{\mbf{j}}(\widehat{\mf{u}}^{\gamma_{\mbf{n}},\vartheta_{\mbf{n}}  } )(s) \d s,
\end{align*}
where we denote $\widehat{\mf{u}}_{\mbf{j}}^{\gamma_{\mbf{n}},\vartheta_{\mbf{n}}  }  (t)=\left\langle \widehat{\mf{u}}^{\gamma_{\mbf{n}},\vartheta_{\mbf{n}}  }, \xi_{\mbf{j} } \right\rangle$. 
On the other hand, the last equation of \eqref{prre} gives us
\begin{align*}    
\widehat{\mf{u}}_{\mbf{j}}^{\gamma_{\mbf{n}},\vartheta_{\mbf{n}}  }  (\mc{T}) + \vartheta_{\mbf{n}}\widehat{\mf{u}}_{\mbf{j}}^{\gamma_{\mbf{n}},\vartheta_{\mbf{n}}  }  (0) =\widehat{\varphi}^{\gamma_{\mbf{n}}}_{\mbf{j}},
\end{align*} 
where $\widehat{\varphi}^{\gamma_{\mbf{n}}}_{\mbf{j}} := \left\langle \widehat{\varphi}^{\gamma_{\mbf{n}}}, \xi_{\mbf{j}} \right\rangle $.  
From the two latter equations, one can see that
\begin{align*}    
\widehat{\mf{u}}_{\mbf{j}}^{\gamma_{\mbf{n}},\vartheta_{\mbf{n}}  }  (t)  =   & \dfrac{E_{\al,1} \big(- \lambda_{\mbf{j}} t^\al \big)}{\vartheta_{\mbf{n}} + E_{\al,1} \big(- \lambda_{\mbf{j}} \mc{T}^\al \big)} \widehat{\varphi}^{\gamma_{\mbf{n}}}_{\mbf{j} }
+ \int\limits_{ 0 }^{ t } E_{\al,1} \big( -\lambda_{\mbf{j}} (t-s)^\al \big) f_{\mbf{j}}(\widehat{\mf{u}}^{\gamma_{\mbf{n}},\vartheta_{\mbf{n}}  })(s) \d s 
\\
&-  \int\limits_{ 0 }^{ \mc{T} } \dfrac{E_{\al,1} \big(- \lambda_{\mbf{j}} t^\al \big) E_{\al,1} \big( -\lambda_{\mbf{j}}(\mc{T}-s)^\al \big)}{\vartheta_{\mbf{n}} + E_{\al,1} \big(- \lambda_{\mbf{j}} \mc{T}^\al \big)} f_{\mbf{j}}(\widehat{\mf{u}}^{\gamma_{\mbf{n}},\vartheta_{\mbf{n}}  })(s) \d s.
\end{align*}  
For convenience, we define 
\begin{align}   
\widehat {\mc{A}}_{\vartheta_{\mbf{n}} }(t) g &:= \sum_{\mbf{j} \in \mb{N}^d}  \Bigg( \dfrac{E_{\al,1} \big(- \lambda_{\mbf{j}} t^\al \big)}{\vartheta_{\mbf{n}} + E_{\al,1} \big(- \lambda_{\mbf{j}} \mc{T}^\al \big)} \left\langle g, \xi_{\mbf{j}} \right\rangle   \Bigg) \xi_{\mbf{j}}, \label{defare}
\end{align}
and $\widehat{\mc{D}}_{\vartheta_{\mbf{n}}}(t,s)g:=\widehat{\mc{A}}_{\vartheta_{\mbf{n}} }(t) \mc{B}(s)g$, for $t,s \in [0,\mc{T}]$. Then,  $\widehat{\mf{u}}^{\gamma_{\mbf{n}},\vartheta_{\mbf{n}}  }$ (called regularized solution) satisfies the equation
\begin{align}  \label{opereqre}    \widehat{\mf{u}}^{\gamma_{\mbf{n}},\vartheta_{\mbf{n}}  } (t,x) = \widehat {\mc{A}}_{\vartheta_{\mbf{n}} }(t) \widehat{\varphi}^{\gamma_{\mbf{n}}} (x) + \int\limits_{ 0 }^{ t } \mc{B}&(t-s) f(s,x,\widehat{\mf{u}}^{\gamma_{\mbf{n}},\vartheta_{\mbf{n}}  }(s,x)) \d s \nn \\
&-  \int\limits_{ 0 }^{ \mc{T} } \widehat {\mc{D}}_{\vartheta_{\mbf{n}} }(t,\mc{T}-s)  f(s,x,\widehat{\mf{u}}^{\gamma_{\mbf{n}},\vartheta_{\mbf{n}}  }(s,x)) \d s.
\end{align}
\subsection{Convergence results} 
We now estimate the error between the regularized solution $\widehat{\mf{u}}^{\gamma_{\mbf{n}},\vartheta_{\mbf{n}}  } $ and the sought solution $u$ in two different cases of space under the following assumptions:
\begin{itemize}
	\item [(H1)] $f$ satisfies the globally Lipschitz property, i.e., there exists a positive
	constant $K$ such that 
	\begin{align*}   
	\left\| f(t,\cdot,u_1(t,\cdot)) - f(t,\cdot,u_2(t,\cdot)) \right\|_{{L}^2(\Omega) }  \le K \left\| u_1(t,\cdot) - u_2(t,\cdot) \right\|_{L^2(\Omega)} , \quad u_1,u_2 \in L^2(\Omega).
	\end{align*}
	\item [(H2)] $K \in \left( 0, \mc{Q}^{-1}_{\al,\mc{T}} \right) $ in which $\mc{Q}^{-1}_{\al,\mc{T}}$ is defined in \eqref{ask}.
\end{itemize}
\noindent \textbf{Part A} \textbf{(Convergence rate in $\mb{X}_{\mc{T}}$}). In this part, we give the error estimate in the space $\mb{X}_{\mc{T}}$:
\begin{theorem} Let $\gamma_{\mbf{n}}$, $\vartheta_{\mbf{n}}$, with $\mbf{n}=(n_1,\dots,n_d) \in \mb{N}^d$, satisfying
	\begin{align*}    
	\lim_{|\mbf{n}| \to \infty} \gamma_{\mbf{n}}=\infty, \quad \lim_{|\mbf{n}| \to \infty} \vartheta_{\mbf{n}}=0, \quad \text{and} \quad \lim_{|\mbf{n}| \to \infty}  \dfrac{\gamma_{\mbf{n}}^{d/2}}{\vartheta_{\mbf{n}}^2} \prod_{i=1}^d n_i^{-4\mu_i} = \lim_{|\mbf{n}| \to \infty} \dfrac{\gamma_{\mbf{n}}^{-\mu_\circ}}{\vartheta_{\mbf{n}}^2} = 0.
	\end{align*}
	Assume that $u(0,\cdot) \in H^1(\Omega)$, $\varphi \in H^{\mu_\circ}(\Omega)$ with $\mu_\circ$ is as in Theorem \ref{maintheomul} and the assumptions (H1),(H2) are satisfied. Then, $\left\| \widetilde {\mf{u}} ^{N_1,N_2} - u \right\|^2_{\mb{X}_\mc{T}}$ is of order 
	\begin{align*}    
	\max \left\{ \dfrac{\gamma_{\mbf{n}}^{d/2}}{\vartheta_{\mbf{n}}^2} \prod_{i=1}^d n_i^{-4\mu_i}, \dfrac{\gamma_{\mbf{n}}^{-\mu_\circ}}{\vartheta_{\mbf{n}}^2}, \vartheta_{\mbf{n}} \right\}.
	\end{align*}  	
\end{theorem}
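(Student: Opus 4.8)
The plan is to mimic the two-step decomposition of the two-dimensional theorem, replacing the truncation operators by the quasi-boundary value operators $\widehat{\mc{A}}_{\vartheta_{\mbf{n}}}$ and $\widehat{\mc{D}}_{\vartheta_{\mbf{n}}}$. First I would introduce the auxiliary process $v^{\vartheta_{\mbf{n}}}$ solving the integral equation \eqref{opereqre} with the exact data $\varphi$ in place of $\widehat{\varphi}^{\gamma_{\mbf{n}}}$, namely
$$v^{\vartheta_{\mbf{n}}}(t,x) = \widehat{\mc{A}}_{\vartheta_{\mbf{n}}}(t)\varphi(x) + \int_0^t \mc{B}(t-s) f(s,x,v^{\vartheta_{\mbf{n}}}(s,x))\,\d s - \int_0^{\mc{T}} \widehat{\mc{D}}_{\vartheta_{\mbf{n}}}(t,\mc{T}-s) f(s,x,v^{\vartheta_{\mbf{n}}}(s,x))\,\d s.$$
Using $(a+b)^2\le 2(a^2+b^2)$, the theorem reduces to bounding the stochastic part $\|\widehat{\mf{u}}^{\gamma_{\mbf{n}},\vartheta_{\mbf{n}}} - v^{\vartheta_{\mbf{n}}}\|^2_{\mb{X}_\mc{T}}$ and the deterministic regularization part $\|v^{\vartheta_{\mbf{n}}} - u\|^2_{\mb{X}_\mc{T}}$ separately.

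For the stochastic part, subtracting the two integral equations leaves the single inhomogeneous term $\widehat{\mc{A}}_{\vartheta_{\mbf{n}}}(t)(\widehat{\varphi}^{\gamma_{\mbf{n}}}-\varphi)$ together with the differences of the two source integrals. The multiplier of $\widehat{\mc{A}}_{\vartheta_{\mbf{n}}}(t)$ has numerator $\le \ms{M}_2$ and denominator $\ge \vartheta_{\mbf{n}}$, so $\|\widehat{\mc{A}}_{\vartheta_{\mbf{n}}}(t)g\|_{L^2(\Omega)}\le (\ms{M}_2/\vartheta_{\mbf{n}})\|g\|_{L^2(\Omega)}$; applying this and then Theorem \ref{maintheomul}(a) yields exactly the two orders $\gamma_{\mbf{n}}^{d/2}\vartheta_{\mbf{n}}^{-2}\prod_i n_i^{-4\mu_i}$ and $\gamma_{\mbf{n}}^{-\mu_\circ}\vartheta_{\mbf{n}}^{-2}$. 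Since $\widehat{\mc{D}}_{\vartheta_{\mbf{n}}}(t,s)=\widehat{\mc{A}}_{\vartheta_{\mbf{n}}}(t)\mc{B}(s)$ has multiplier dominated by that of $\mc{D}(t,s)$ (the extra $\vartheta_{\mbf{n}}$ only enlarges a denominator), Lemma \ref{lmop} applies verbatim, and repeating the H\"older estimate of Lemma \ref{uniquelm} bounds the source differences by $K^2\mc{Q}^2_{\al,\mc{T}}\|\widehat{\mf{u}}^{\gamma_{\mbf{n}},\vartheta_{\mbf{n}}}-v^{\vartheta_{\mbf{n}}}\|^2_{\mb{X}_\mc{T}}$, which is absorbed into the left-hand side using (H2), i.e. $1-K^2\mc{Q}^2_{\al,\mc{T}}>0$.

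The decisive computation is the regularization error $v^{\vartheta_{\mbf{n}}}-u$. Substituting $\varphi_{\mbf{j}} = E_{\al,1}(-\lambda_{\mbf{j}}\mc{T}^\al)u_{\mbf{j}}(0) + \int_0^{\mc{T}} E_{\al,1}(-\lambda_{\mbf{j}}(\mc{T}-s)^\al) f_{\mbf{j}}(u)(s)\,\d s$ (obtained from \eqref{ummm} at $t=\mc{T}$) into the coefficient of $[\widehat{\mc{A}}_{\vartheta_{\mbf{n}}}(t)-\mc{A}(t)]\varphi$, I expect the integral piece to cancel exactly against the source regularization error $-\int_0^{\mc{T}}[\widehat{\mc{D}}_{\vartheta_{\mbf{n}}}-\mc{D}](t,\mc{T}-s)f(u)\,\d s$, since both carry the common factor $\vartheta_{\mbf{n}}\big(E_{\al,1}(-\lambda_{\mbf{j}}\mc{T}^\al)(\vartheta_{\mbf{n}}+E_{\al,1}(-\lambda_{\mbf{j}}\mc{T}^\al))\big)^{-1}$. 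What remains is the clean term with $\mbf{j}$-th coefficient $E_{\al,1}(-\lambda_{\mbf{j}}t^\al)\,\vartheta_{\mbf{n}}\big(\vartheta_{\mbf{n}}+E_{\al,1}(-\lambda_{\mbf{j}}\mc{T}^\al)\big)^{-1}u_{\mbf{j}}(0)$, plus a Lipschitz remainder handled exactly as in the previous step. Bounding $E_{\al,1}(-\lambda_{\mbf{j}}t^\al)\le\ms{M}_2$ and using the elementary inequality $\vartheta_{\mbf{n}}^2(\vartheta_{\mbf{n}}+E_{\al,1})^{-2}\le \vartheta_{\mbf{n}}/E_{\al,1}(-\lambda_{\mbf{j}}\mc{T}^\al)\le \vartheta_{\mbf{n}}(1+\lambda_{\mbf{j}}\mc{T}^\al)/\ms{M}_1$ (from Lemma \ref{lm11}), the squared surviving term is majorized by $(\ms{M}_2^2\vartheta_{\mbf{n}}/\ms{M}_1)\sum_{\mbf{j}}(1+\lambda_{\mbf{j}}\mc{T}^\al)u_{\mbf{j}}^2(0)$, which is finite and of order $\vartheta_{\mbf{n}}$ precisely because $u(0,\cdot)\in H^1(\Omega)$.

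Finally I would collect the two contraction remainders onto the left, divide by $\tfrac12(1-K^2\mc{Q}^2_{\al,\mc{T}})>0$, and conclude that $\|\widehat{\mf{u}}^{\gamma_{\mbf{n}},\vartheta_{\mbf{n}}}-u\|^2_{\mb{X}_\mc{T}}$ is of order $\max\big\{\gamma_{\mbf{n}}^{d/2}\vartheta_{\mbf{n}}^{-2}\prod_i n_i^{-4\mu_i},\,\gamma_{\mbf{n}}^{-\mu_\circ}\vartheta_{\mbf{n}}^{-2},\,\vartheta_{\mbf{n}}\big\}$. The hard part will be verifying the exact cancellation of the two source regularization terms and then the $H^1(\Omega)$-based estimate of the surviving $u(0,\cdot)$ term; everything else runs parallel to the two-dimensional proof.
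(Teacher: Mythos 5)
Your proposal is correct and follows essentially the same route as the paper: the same splitting into a data-error term controlled by $\bigl\| \widehat{\mc{A}}_{\vartheta_{\mbf{n}}}(t) g \bigr\|_{L^2(\Omega)} \le \ms{M}_2 \vartheta_{\mbf{n}}^{-1} \| g \|_{L^2(\Omega)}$ plus Theorem \ref{maintheomul}(a), and a regularization-error term reduced via the identity for $u_{\mbf{j}}(0)$ to the coefficient $\vartheta_{\mbf{n}} E_{\al,1}(-\lambda_{\mbf{j}}t^\al)\bigl(\vartheta_{\mbf{n}}+E_{\al,1}(-\lambda_{\mbf{j}}\mc{T}^\al)\bigr)^{-1} u_{\mbf{j}}(0)$, estimated by $\vartheta_{\mbf{n}}(1+\mc{T}^\al)\lambda_{\mbf{j}}/(4\ms{M}_1)$ using $u(0,\cdot)\in H^1(\Omega)$, with the Lipschitz remainders absorbed through (H2). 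The only cosmetic difference is that the paper's auxiliary function $v^{\vartheta_{\mbf{n}}}$ in \eqref{vre} carries $f(s,x,u(s,x))$ inside the integrals (so it is explicit and the cancellation in your Step 2 is exact, with no extra Lipschitz remainder and no separate well-posedness argument for $v^{\vartheta_{\mbf{n}}}$), whereas your fixed-point version works equally well but adds one more contraction term to absorb.
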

In order to prove the theorem above, we first give some properties for the operators appearing in the equation \eqref{opereqre}.
\begin{lemma} \label{lmqsre}
	Let $g$ be a function in $L^2(\Omega)$. Then
	\begin{align*}  
	\left\| \widehat {\mc{A}}_{\vartheta_{\mbf{n}} } (t) g \right\|_{L^2(\Omega)}  \le \dfrac{\ms{M}_2}{\vartheta_{\mbf{n}} } \left\| g \right\|_{L^2(\Omega)} , \quad \text{for } 0 \le t \le \mc{T},
	\end{align*}  
and
\begin{align*}    
\left\| \widehat {\mc{D}}_{\vartheta_{\mbf{n}} } (t,s) g \right\|_{L^2(\Omega)} \le  \dfrac{\ms{M}^2_2 \mc{T}^\al}{\ms{M}_1 s^\al}   \left\| g  \right\|_{L^2(\Omega)}, \quad \text{for } 0 \le t \le \mc{T},0 < s \le \mc{T}.
\end{align*}  	
\end{lemma}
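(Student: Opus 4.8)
The plan is to prove both estimates directly through Parseval's identity, exploiting the positivity of the regularization parameter $\vartheta_{\mbf{n}}$ and of the Mittag-Leffler function together with the same two-sided bounds already employed in Lemma \ref{lmop}. For the first estimate I would start from the definition \eqref{defare} and expand in the orthonormal basis $\{\xi_{\mbf{j}}\}$, obtaining
\begin{align*}
\left\| \widehat{\mc{A}}_{\vartheta_{\mbf{n}}}(t) g \right\|_{L^2(\Omega)}^2 = \sum_{\mbf{j} \in \mb{N}^d} \left( \dfrac{E_{\al,1}\big(-\lambda_{\mbf{j}} t^\al\big)}{\vartheta_{\mbf{n}} + E_{\al,1}\big(-\lambda_{\mbf{j}} \mc{T}^\al\big)} \right)^2 \left\langle g, \xi_{\mbf{j}} \right\rangle^2.
\end{align*}
Since $E_{\al,1}\big(-\lambda_{\mbf{j}} t^\al\big) \le \ms{M}_2$ and, because $E_{\al,1}$ is strictly positive, the denominator satisfies $\vartheta_{\mbf{n}} + E_{\al,1}\big(-\lambda_{\mbf{j}} \mc{T}^\al\big) \ge \vartheta_{\mbf{n}}$, each coefficient is bounded by $\ms{M}_2/\vartheta_{\mbf{n}}$ uniformly in $\mbf{j}$ and $t$. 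Pulling this factor out of the sum and invoking Parseval once more yields the claimed bound $\left\| \widehat{\mc{A}}_{\vartheta_{\mbf{n}}}(t) g \right\|_{L^2(\Omega)} \le (\ms{M}_2/\vartheta_{\mbf{n}}) \left\| g \right\|_{L^2(\Omega)}$.

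For the second estimate I would write $\widehat{\mc{D}}_{\vartheta_{\mbf{n}}}(t,s) g = \widehat{\mc{A}}_{\vartheta_{\mbf{n}}}(t) \mc{B}(s) g$ in Fourier coordinates, using $\langle \mc{B}(s) g, \xi_{\mbf{j}} \rangle = E_{\al,1}\big(-\lambda_{\mbf{j}} s^\al\big) \langle g, \xi_{\mbf{j}} \rangle$, so that the $\mbf{j}$-th coefficient of $\widehat{\mc{D}}_{\vartheta_{\mbf{n}}}(t,s) g$ equals
\[
\dfrac{E_{\al,1}\big(-\lambda_{\mbf{j}} t^\al\big)\, E_{\al,1}\big(-\lambda_{\mbf{j}} s^\al\big)}{\vartheta_{\mbf{n}} + E_{\al,1}\big(-\lambda_{\mbf{j}} \mc{T}^\al\big)} \left\langle g, \xi_{\mbf{j}} \right\rangle.
\]
The key observation is that, because $\vartheta_{\mbf{n}} > 0$, discarding it from the denominator only enlarges the coefficient, so the problem reduces to exactly the bound on $\mc{D}(t,s)$ already established in Lemma \ref{lmop}. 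Concretely, the two-sided Mittag-Leffler estimates bound the resulting coefficient by $\ms{M}_2^2(1 + \lambda_{\mbf{j}} \mc{T}^\al)/[\ms{M}_1(1 + \lambda_{\mbf{j}} s^\al)]$, and since $0 < s \le \mc{T}$ one has $(1 + \lambda_{\mbf{j}} \mc{T}^\al)/(1 + \lambda_{\mbf{j}} s^\al) \le \mc{T}^\al/s^\al$. Parseval's identity then delivers the factor $\ms{M}_2^2 \mc{T}^\al/(\ms{M}_1 s^\al)$.

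There is no serious analytical obstacle here; the only point requiring a moment's care is the monotonicity argument that lets one drop $\vartheta_{\mbf{n}}$ from the denominator in the second estimate, thereby inheriting the bound of Lemma \ref{lmop} verbatim. I would emphasize explicitly that this second bound is \emph{independent} of $\vartheta_{\mbf{n}}$, since this is precisely what is needed later to ensure that the $\widehat{\mc{D}}_{\vartheta_{\mbf{n}}}$ contribution in the integral equation \eqref{opereqre} remains controlled as $\vartheta_{\mbf{n}} \to 0$, whereas the $\vartheta_{\mbf{n}}^{-1}$ factor is isolated entirely in the $\widehat{\mc{A}}_{\vartheta_{\mbf{n}}}$ term.
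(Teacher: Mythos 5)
Your proof is correct and follows essentially the same route as the paper: bound each Fourier multiplier uniformly in $\mbf{j}$ (using $\vartheta_{\mbf{n}}+E_{\al,1}(-\lambda_{\mbf{j}}\mc{T}^\al)\ge\vartheta_{\mbf{n}}$ for the first estimate, dropping $\vartheta_{\mbf{n}}$ from the denominator and reusing the Lemma \ref{lmop} computation for the second) and then apply Parseval's identity. Your closing remark on the $\vartheta_{\mbf{n}}$-independence of the second bound is a useful observation, but the argument itself is the paper's verbatim.
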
 
\begin{proof}
	Since
	\begin{align*}    
	\dfrac{E_{\al,1} \big(- \lambda_{\mbf{j}} t^\al \big)}{\vartheta_{\mbf{n}} + E_{\al,1} \big(- \lambda_{\mbf{j}} \mc{T}^\al \big)} \le \dfrac{E_{\al,1} \big(- \lambda_{\mbf{j}} t^\al \big)}{\vartheta_{\mbf{n}}} \le \dfrac{\ms{M}_2}{\vartheta_{\mbf{n}}}, \quad 
	\end{align*} 
	and
\begin{align*}    
 \dfrac{E_{\al,1} (-\lambda_{\mbf{j}} t^\al)}{\vartheta_{\mbf{n}} + E_{\al,1} (-\lambda_{\mbf{j}} \mc{T}^\al)} E_{\al,1} (-\lambda_{\mbf{j}} s^\al) \le \dfrac{E_{\al,1} (-\lambda_{\mbf{j}} t^\al)}{ E_{\al,1} (-\lambda_{\mbf{j}} \mc{T}^\al)} E_{\al,1} (-\lambda_{\mbf{j}} s^\al) \le \dfrac{\ms{M}^2_2 \mc{T}^\al}{\ms{M}_1 s^\al},
\end{align*}  
	we have
	\begin{align*}    
	\left\| \widehat {\mc{A}}_{\vartheta_{\mbf{n}} }(t) g \right\|^2_{{L}^2(\Omega)}  \le \dfrac{\ms{M}_2^2}{\vartheta_{\mbf{n}}^2} \sum_{\mbf{j} \in \mb{N}^d}   \left\langle g, \xi_{\mbf{j}} \right\rangle ^2 = \dfrac{\ms{M}_2^2}{\vartheta_{\mbf{n}}^2} \left\| g \right\|^2_{{L}^2(\Omega)}, \quad \text{and} \quad \left\| \widehat {\mc{D}}_{\vartheta_{\mbf{n}} } (t,s) g\right\|_{L^2(\Omega)}  \le  \dfrac{\ms{M}^2_2 \mc{T}^\al}{\ms{M}_1 s^\al} \left\| g \right\|^2_{{L}^2(\Omega)}.
	\end{align*} 
This completes the proof.
\end{proof}
Under  assumptions (H1) and (H2), one can check that the integral equation \eqref{opereqre} has a unique solution $\widehat{\mf{u}}^{\gamma_{\mbf{n}},\vartheta_{\mbf{n}}  } \in \mb{X}_{\mc{T}}$  using Lemma \ref{lmop},  Lemma \ref{lmqsre} and a similar method as in Lemma \ref{uniquelm}. Now, we are ready to prove Theorem \ref{maintheomul}.
\begin{proof} [Proof of Theorem \ref{maintheomul}] Let 
\begin{align}  \label{vre} v^{\vartheta_{\mbf{n}}  } (t,x) = \widehat {\mc{A}}_{\vartheta_{\mbf{n}} }(t) {\varphi} (x) + \int\limits_{ 0 }^{ t } \mc{B}&(t-s) f(s,x,u(s,x)) \d s -  \int\limits_{ 0 }^{ \mc{T} } \widehat {\mc{D}}_{\vartheta_{\mbf{n}} }(t,\mc{T}-s)  f(s,x,u(s,x)) \d s.
\end{align}
Then, we have
\begin{align}    \label{2terms}
\left\| \widehat{\mf{u}}^{\gamma_{\mbf{n}},\vartheta_{\mbf{n}}  }(t,\cdot)  - u (t,\cdot) \right\|_{L^2(\Omega)} \le \left\| \widehat{\mf{u}}^{\gamma_{\mbf{n}},\vartheta_{\mbf{n}}  }(t,\cdot)  - v^{\vartheta_{\mbf{n}}  } (t,\cdot) \right\|_{L^2(\Omega)} + \left\|  v^{\vartheta_{\mbf{n}}  } (t,\cdot) - u(t,\cdot) \right\|_{L^2(\Omega)}.
\end{align}
\textbf{Step 1} (Estimate the first term of \eqref{2terms}). It follows from \eqref{opereqre} and \eqref{vre} that  
\begin{align*}    
&\left\| \widehat{\mf{u}}^{\gamma_{\mbf{n}},\vartheta_{\mbf{n}}  }(t,\cdot)  - v^{\vartheta_{\mbf{n}}  } (t,\cdot) \right\|_{L^2(\Omega)} \\
&\le \left\| \widehat {\mc{A}}_{\vartheta_{\mbf{n}} }(t) \left( \widehat{\varphi}^{\gamma_{\mbf{n}}} - {\varphi} \right)    \right\|_{{L}^2(\Omega) } +  \int\limits_{ 0 }^{ t } \left\| \mc{B}(t-s) \big( f(s,\cdot,\widehat{\mf{u}}^{\gamma_{\mbf{n}},\vartheta_{\mbf{n}}  } (s,\cdot)) -  f(s,\cdot,u(s,\cdot)) \big)   \right\|_{{L}^2(\Omega) } \d s \\
&+ \int\limits_{ 0 }^{ \mc{T} } \left\|  \widehat {\mc{D}}_{\vartheta_{\mbf{n}} }(t,\mc{T}-s)  \big( f(s,\cdot,\widehat{\mf{u}}^{\gamma_{\mbf{n}},\vartheta_{\mbf{n}}  } (s,\cdot)) -  f(s,\cdot,u(s,\cdot)) \big) \right\|_{L^2(\Omega)}  \d s \\
&=: \Theta_1 + \Theta_2 +\Theta_3 .
\end{align*}  
From Lemma \ref{lmqsre}, one gets
\begin{align*}  
\mb{E} \Theta_1^2 =  \mb{E} 
\left\| \widehat {\mc{A}}_{\vartheta_{\mbf{n}} }(t) \left( \widehat{\varphi}^{\gamma_{\mbf{n}}} - {\varphi} \right)    \right\|^2_{{L}^2(\Omega) } \le \dfrac{\ms{M}^2_2}{\vartheta^2_{\mbf{n}}} \mb{E}  \left\| \widehat{\varphi}^{\gamma_{\mbf{n}}} - {\varphi} \right\|^2_{{L}^2(\Omega) }.
\end{align*}  
For the term $\Theta_2$ and $\Theta_3$, by a similar technique as in \eqref{lastb}, one arrives at
\begin{align*}    
\mb{E} \left( \Theta_2+ \Theta_3 \right)^2 \le 2K^2 \ms{M}_2^2 \mc{T}^2 \left( 1+ \dfrac{\ms{M}_2^2}{\ms{M}_1^2(1-\al)^2}  \right) \sup_{t \in [0,\mc{T}]} \mb{E} \left\| \widehat{\mf{u}}^{\gamma_{\mbf{n}},\vartheta_{\mbf{n}}  }(t,\cdot) - u(t,\cdot)  \right\|_{L^2(\Omega)}^2 .
\end{align*}     
Hence
\begin{align} \label{1stterm}   
\mb{E} \left\| \widehat{\mf{u}}^{\gamma_{\mbf{n}},\vartheta_{\mbf{n}}  }(t,\cdot)  - v^{\vartheta_{\mbf{n}}  } (t,\cdot) \right\|_{L^2(\Omega)}^2 &\le 2 \mb{E} \Theta^2_2 + 2 \mb{E} \left( \Theta_2+ \Theta_3 \right)^2 \nn \\
&\le 2 \dfrac{\ms{M}^2_2}{\vartheta^2_{\mbf{n}}} \mb{E}  \left\| \widehat{\varphi}^{\gamma_{\mbf{n}}} - {\varphi} \right\|^2_{{L}^2(\Omega) } + \dfrac{\mc{Q}^2_{\alpha,\mc{T}} }{2} \left\| \widehat{\mf{u}}^{\gamma_{\mbf{n}},\vartheta_{\mbf{n}}  } - u  \right\|_{\mb{X}_{\mc{T}}}^2 .
\end{align}  
\textbf{Step 2} (Estimate the second term of \eqref{2terms}). It follows from \eqref{opereq} and \eqref{vre}  that
\begin{align*}  
&\left\| v^{\vartheta_{\mbf{n}}}(t,\cdot) - u(t,\cdot) \right\|^2_{{L}^2(\Omega)}  \\
&=\Bigg\| \widehat {\mc{A}}_{\vartheta_{\mbf{n}} }(t) {\varphi}  - {\mc{A}}(t) {\varphi} -  \int\limits_{ 0 }^{ \mc{T} } \left(  \widehat {\mc{D}}_{\vartheta_{\mbf{n}} }(t,\mc{T}-s)  f(s,\cdot,u(s,\cdot)) - {\mc{D}}(t,\mc{T}-s)  f(s,\cdot,u(s,\cdot)) \right)  \d s \Bigg\|^2_{{L}^2(\Omega)} \\
&= \Bigg\|  \sum_{\mbf{j} \in \mb{N}^d} \dfrac{\vartheta_{\mbf{n}} E_{\al,1} \big(- \lambda_{\mbf{j}} t^\al \big)}{  \vartheta_{\mbf{n}} + E_{\al,1} \big(- \lambda_{\mbf{j}} \mc{T}^\al \big)  } \dfrac{ \varphi_{\mbf{j}} - \int\limits_{ 0 }^{ \mc{T} }  E_{\al,1} \big( -\lambda_{\mbf{j}}(\mc{T}-s)^\al \big) f_{\mbf{j}}(u)(s)}{E_{\al,1} \big(- \lambda_{\mbf{j}} \mc{T}^\al \big)}\xi_{\mbf{j}} \Bigg\|_{{L}^2(\Omega)}^2.
\end{align*}  
In addition
$  
u_{\mbf{j}}(0) = \dfrac{ \varphi_{\mbf{j}} - \int\limits_{ 0 }^{ \mc{T} }  E_{\al,1} \big( -\lambda_{\mbf{j}}(\mc{T}-s)^\al \big) f_{\mbf{j}}(u)(s)}{E_{\al,1} \big(- \lambda_{\mbf{j}} \mc{T}^\al \big)},
$
and
\begin{align*}    
\dfrac{\vartheta^2_{\mbf{n}}  E^2_{\al,1} \big(- \lambda_{\mbf{j}} t^\al \big)}{ \left( \vartheta_{\mbf{n}} + E_{\al,1} \big(- \lambda_{\mbf{j}} \mc{T}^\al \big)   \right)^2 } \le \dfrac{\vartheta_{\mbf{n}} \ms{M}_2^2}{4  E_{\al,1} \big(- \lambda_{\mbf{j}} \mc{T}^\al \big)} \le \dfrac{\vartheta_{\mbf{n}} \ms{M}_2^2}{4 \ms{M}_1} \left( 1+ \mc{T}^\alpha \right) \lambda_{\mbf{j} }. 
\end{align*}  
Hence
\begin{align}   \label{2ndterm} 
\left\| v^{\vartheta_{\mbf{n}}}(t,\cdot) - u(t,\cdot) \right\|^2_{{L}^2(\Omega)}  =  \sum_{\mbf{j} \in \mb{N}^d} \dfrac{\vartheta^2_{\mbf{n}}  E^2_{\al,1} \big(- \lambda_{\mbf{j}} t^\al \big)}{ \left( \vartheta_{\mbf{n}} + E_{\al,1} \big(- \lambda_{\mbf{j}} \mc{T}^\al \big)   \right)^2 } u^2_{\mbf{j}} (0) \le \dfrac{\vartheta_{\mbf{n}} \ms{M}_2^2}{4 \ms{M}_1} \left( 1+ \mc{T}^\alpha \right) \left\| u(0,\cdot) \right\|^2_{{H}^1(\Omega)}.
\end{align} 
Combining \eqref{2terms}, \eqref{1stterm} and \eqref{2ndterm}, we deduce that
\begin{align*}    
&\mb{E} \left\| \widehat{\mf{u}}^{\gamma_{\mbf{n}},\vartheta_{\mbf{n}}  }(t,\cdot)  - u (t,\cdot) \right\|^2_{L^2(\Omega)} \\
&\le 4 \dfrac{\ms{M}^2_2}{\vartheta^2_{\mbf{n}}} \mb{E}  \left\| \widehat{\varphi}^{\gamma_{\mbf{n}}} - {\varphi} \right\|^2_{{L}^2(\Omega) } + \mc{Q}^2_{\alpha,\mc{T}}  \left\| \widehat{\mf{u}}^{\gamma_{\mbf{n}},\vartheta_{\mbf{n}}  } - u  \right\|_{\mb{X}_{\mc{T}}}^2  +  \dfrac{\vartheta_{\mbf{n}} \ms{M}_2^2}{2 \ms{M}_1} \left( 1+ \mc{T}^\alpha \right) \left\| u(0,\cdot) \right\|^2_{{H}^1(\Omega)},
\end{align*}  
which follows that
\begin{align*}    
(1-\mc{Q}^2_{\alpha,\mc{T}}) \left\| \widehat{\mf{u}}^{\gamma_{\mbf{n}},\vartheta_{\mbf{n}}  } - u  \right\|_{\mb{X}_{\mc{T}}}^2 \le 4 \dfrac{\ms{M}^2_2}{\vartheta^2_{\mbf{n}}} \mb{E}  \left\| \widehat{\varphi}^{\gamma_{\mbf{n}}} - {\varphi} \right\|^2_{{L}^2(\Omega) } +  \dfrac{\vartheta_{\mbf{n}} \ms{M}_2^2}{2 \ms{M}_1} \left( 1+ \mc{T}^\alpha \right) \left\| u(0,\cdot) \right\|^2_{{H}^1(\Omega)}.
\end{align*}  
Now, using Part (a) of Theorem \ref{maintheomul}, we conclude that
\begin{align*}    
(1-\mc{Q}^2_{\alpha,\mc{T}}) \left\| \widehat{\mf{u}}^{\gamma_{\mbf{n}},\vartheta_{\mbf{n}}  } - u  \right\|_{\mb{X}_{\mc{T}}}^2  &\le 4 \dfrac{\ms{M}^2_2}{\vartheta^2_{\mbf{n}}} \left[  {\overline{C} (\mu,\varphi)} \gamma_{\mbf{n}}^{d/2} \prod_{i=1}^d n_i^{-4\mu_i} + 4 \gamma_{\mbf{n}}^{-\mu_\circ} \left\| \varphi \right\|^2_{H^{\mu_0}(\Omega)} \right] \\
&+  \dfrac{\vartheta_{\mbf{n}} \ms{M}_2^2}{2 \ms{M}_1} \left( 1+ \mc{T}^\alpha \right) \left\| u(0,\cdot) \right\|^2_{{H}^1(\Omega)},
\end{align*}  
which implies that $\left\| \widehat{\mf{u}}^{\gamma_{\mbf{n}},\vartheta_{\mbf{n}}  } - u  \right\|_{\mb{X}_{\mc{T}}}^2$ is of order 
\begin{align*}    
\max \left\{ \dfrac{\gamma_{\mbf{n}}^{d/2}}{\vartheta_{\mbf{n}}^2} \prod_{i=1}^d n_i^{-4\mu_i}, \dfrac{\gamma_{\mbf{n}}^{-\mu_\circ}}{\vartheta_{\mbf{n}}^2}, \vartheta_{\mbf{n}} \right\}.
\end{align*}  
\end{proof}

\noindent \textbf{Part B} \textbf{(Convergence rate in $\mb{S}_{\sigma,\mc{T}}$}). The following theorem gives the error estimate in the space $\mb{S}_{\sigma,\mc{T}}$:
\begin{theorem} \label{main2}
	Let $\gamma_{\mbf{n}}$, $\vartheta_{\mbf{n}}$, with $\mbf{n}=(n_1,\dots,n_d) \in \mb{N}^d$, satisfying
	\begin{align*}    
	\lim_{|\mbf{n}| \to \infty} \gamma_{\mbf{n}}=\infty, \quad \lim_{|\mbf{n}| \to \infty} \vartheta_{\mbf{n}}=0, \quad \text{and} \quad \lim_{|\mbf{n}| \to \infty}  \dfrac{\gamma_{\mbf{n}}^{\sigma + d/2}}{\vartheta_{\mbf{n}}^2} \prod_{i=1}^d n_i^{-4\mu_i} = \lim_{|\mbf{n}| \to \infty} \dfrac{\gamma_{\mbf{n}}^{-\mu_\circ}}{\vartheta_{\mbf{n}}^2} = 0.
	\end{align*}
	Assume that there exists a positive constant $\sigma >0$ such that  $u(0,\cdot) \in H^{\sigma+1}(\Omega)$, $\varphi \in H^{\mu_\circ+\sigma}(\Omega)$ with $\mu_\circ$ is as in Theorem \ref{maintheomul} and assumptions (H1),(H2) are satisfied. Then, $\left\| \widetilde {\mf{u}} ^{N_1,N_2} - u \right\|^2_{\mb{S}_{\sigma,\mc{T}}}$ is of order 
\begin{align*}    
\max \left\{ \dfrac{\gamma_{\mbf{n}}^{\sigma+d/2}}{\vartheta_{\mbf{n}}^2} \prod_{i=1}^d n_i^{-4\mu_i}, \dfrac{\gamma_{\mbf{n}}^{-\mu_\circ}}{\vartheta_{\mbf{n}}^2}, \vartheta_{\mbf{n}} \right\}.
\end{align*}   	
\end{theorem}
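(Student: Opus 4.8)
The plan is to transcribe the proof of the preceding $\mb{X}_{\mc{T}}$-estimate into the $H^\sigma(\Omega)$-setting, feeding in the sharper data bound of Part (b) of Theorem \ref{maintheomul} in place of Part (a). As in that proof I would introduce the auxiliary process
\begin{align*}
v^{\vartheta_{\mbf{n}}}(t,x) = \widehat{\mc{A}}_{\vartheta_{\mbf{n}}}(t)\varphi(x) + \int\limits_0^t \mc{B}(t-s) f(s,x,u(s,x))\d s - \int\limits_0^{\mc{T}} \widehat{\mc{D}}_{\vartheta_{\mbf{n}}}(t,\mc{T}-s) f(s,x,u(s,x))\d s,
\end{align*}
built from the same kernels as $\widehat{\mf{u}}^{\gamma_{\mbf{n}},\vartheta_{\mbf{n}}}$ in \eqref{opereqre} but with the exact data $\varphi$ and the true solution $u$ inside $f$ (this is exactly \eqref{vre}). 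The triangle inequality in $H^\sigma(\Omega)$ then splits $\| \widehat{\mf{u}}^{\gamma_{\mbf{n}},\vartheta_{\mbf{n}}}(t,\cdot) - u(t,\cdot) \|_{H^\sigma(\Omega)}$ into a stability term $\| \widehat{\mf{u}}^{\gamma_{\mbf{n}},\vartheta_{\mbf{n}}} - v^{\vartheta_{\mbf{n}}}\|_{H^\sigma(\Omega)}$ and a regularization-bias term $\| v^{\vartheta_{\mbf{n}}} - u\|_{H^\sigma(\Omega)}$, handled in two steps.

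The crucial preliminary observation is that the operator bounds of Lemma \ref{lmqsre} and Lemma \ref{lmop} hold verbatim with $H^\sigma(\Omega)$ replacing $L^2(\Omega)$, since the Fourier multipliers defining $\widehat{\mc{A}}_{\vartheta_{\mbf{n}}}$, $\mc{B}$ and $\widehat{\mc{D}}_{\vartheta_{\mbf{n}}}$ are bounded uniformly in $\mbf{j}$ (by $\ms{M}_2/\vartheta_{\mbf{n}}$, $\ms{M}_2$ and $\ms{M}_2^2\mc{T}^\al/(\ms{M}_1 s^\al)$ respectively), so inserting the weight $\lambda_{\mbf{j}}^{\sigma}$ into each Fourier coefficient leaves the bound unchanged. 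For Step 1 this gives $\mb{E}\| \widehat{\mc{A}}_{\vartheta_{\mbf{n}}}(t)(\widehat{\varphi}^{\gamma_{\mbf{n}}} - \varphi)\|_{H^\sigma(\Omega)}^2 \le (\ms{M}_2^2/\vartheta_{\mbf{n}}^2)\,\mb{E}\|\widehat{\varphi}^{\gamma_{\mbf{n}}} - \varphi\|_{H^\sigma(\Omega)}^2$, and Part (b) of Theorem \ref{maintheomul} bounds the right-hand side by a constant multiple of $\gamma_{\mbf{n}}^{\sigma+d/2}\vartheta_{\mbf{n}}^{-2}\prod_{i=1}^d n_i^{-4\mu_i} + \gamma_{\mbf{n}}^{-\mu_\circ}\vartheta_{\mbf{n}}^{-2}$; this is precisely where the hypothesis $\varphi \in H^{\mu_\circ+\sigma}(\Omega)$ enters. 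The two nonlinear integrals are then estimated by the same H\"older and $(a+b)^2\le 2(a^2+b^2)$ manipulation that produced \eqref{lastb}, yielding a contraction contribution of the form $\mc{Q}_{\al,\mc{T}}^2\,\| \widehat{\mf{u}}^{\gamma_{\mbf{n}},\vartheta_{\mbf{n}}} - u\|_{\mb{S}_{\sigma,\mc{T}}}^2$, to be absorbed at the end.

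For Step 2 I would expand $v^{\vartheta_{\mbf{n}}} - u$ in the eigenbasis exactly as in \eqref{2ndterm}; the only surviving contribution is
\begin{align*}
\| v^{\vartheta_{\mbf{n}}}(t,\cdot) - u(t,\cdot)\|_{H^\sigma(\Omega)}^2 = \sum_{\mbf{j}\in\mb{N}^d} \lambda_{\mbf{j}}^{\sigma}\, \dfrac{\vartheta_{\mbf{n}}^2 E_{\al,1}^2\big(-\lambda_{\mbf{j}} t^\al\big)}{\big(\vartheta_{\mbf{n}} + E_{\al,1}\big(-\lambda_{\mbf{j}}\mc{T}^\al\big)\big)^2}\, u_{\mbf{j}}^2(0),
\end{align*}
and the same multiplier estimate used in Part A, namely $\tfrac{\vartheta_{\mbf{n}}^2 E_{\al,1}^2(-\lambda_{\mbf{j}}t^\al)}{(\vartheta_{\mbf{n}} + E_{\al,1}(-\lambda_{\mbf{j}}\mc{T}^\al))^2} \le \tfrac{\vartheta_{\mbf{n}}\ms{M}_2^2}{4\ms{M}_1}(1+\mc{T}^\al)\lambda_{\mbf{j}}$, upgrades the weight $\lambda_{\mbf{j}}^{\sigma}$ to $\lambda_{\mbf{j}}^{\sigma+1}$. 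Hence this term is bounded by $\tfrac{\vartheta_{\mbf{n}}\ms{M}_2^2}{4\ms{M}_1}(1+\mc{T}^\al)\|u(0,\cdot)\|_{H^{\sigma+1}(\Omega)}^2$, finite exactly by the hypothesis $u(0,\cdot)\in H^{\sigma+1}(\Omega)$, and it supplies the $\vartheta_{\mbf{n}}$ term of the claimed order. Collecting the two steps and moving the contraction contribution to the left using (H2) gives $(1-\mc{Q}_{\al,\mc{T}}^2)\| \widehat{\mf{u}}^{\gamma_{\mbf{n}},\vartheta_{\mbf{n}}} - u\|_{\mb{S}_{\sigma,\mc{T}}}^2$ controlled by the sum of the three order terms, and the limit hypotheses on $\gamma_{\mbf{n}},\vartheta_{\mbf{n}}$ finish the proof.

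The step I expect to be the main obstacle is the nonlinear part of Step 1. Assumption (H1) is stated only as an $L^2(\Omega)$-Lipschitz bound, so one cannot directly replace $\| f(s,\cdot,\widehat{\mf{u}}^{\gamma_{\mbf{n}},\vartheta_{\mbf{n}}}) - f(s,\cdot,u)\|_{H^\sigma(\Omega)}$ by $K\| \widehat{\mf{u}}^{\gamma_{\mbf{n}},\vartheta_{\mbf{n}}} - u\|_{H^\sigma(\Omega)}$, which is what the uniform-multiplier argument of the previous paragraph requires. To keep the clean contraction constant $\mc{Q}_{\al,\mc{T}}$ one option is to strengthen (H1) to an $H^\sigma(\Omega)$-Lipschitz hypothesis, consistent with the way Lemma \ref{uniquelm} is set up; alternatively one can exploit the genuine smoothing of $\mc{B}(t-s)$ and $\widehat{\mc{D}}_{\vartheta_{\mbf{n}}}(t,\mc{T}-s)$, whose multipliers decay like $\lambda_{\mbf{j}}^{-1}$, to obtain $\| \mc{B}(t-s) h\|_{H^\sigma(\Omega)} \lesssim (t-s)^{-\al\sigma/2}\| h\|_{L^2(\Omega)}$ for $\sigma<2$ and then run a weakly singular Gr\"onwall argument, at the cost of a restriction on $\sigma$ and a modified constant. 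I would adopt the former, stating the $H^\sigma$-Lipschitz condition explicitly, since it reproduces the stated order with the factor $1-\mc{Q}_{\al,\mc{T}}^2$ intact.
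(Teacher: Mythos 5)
Your proposal follows essentially the same route as the paper's proof: the same auxiliary process $v^{\vartheta_{\mbf{n}}}$ from \eqref{vre}, the same two-step splitting, the $H^\sigma$ versions of the operator bounds (the paper's \eqref{est1}--\eqref{est2}), the multiplier estimate upgrading $\lambda_{\mbf{j}}^{\sigma}$ to $\lambda_{\mbf{j}}^{\sigma+1}$ in the bias term, and Part (b) of Theorem \ref{maintheomul} for the data error. The obstacle you flag at the end is real and is in fact glossed over by the paper itself --- its proof simply invokes ``a similar technique as in Part A'' for the nonlinear integrals even though (H1) is stated only in $L^2(\Omega)$, so your explicit proposal to strengthen the Lipschitz hypothesis to $H^\sigma(\Omega)$ (or to exploit the smoothing of $\mc{B}$ at the cost of restricting $\sigma$) is a genuine improvement in rigor over the published argument rather than a deviation from it.
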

\begin{proof}
In order to prove the theorem above, we first give a similar estimate as in Lemma \ref{lmqsre}:
\begin{align}  \label{est1}
	\left\| \widehat {\mc{A}}_{\vartheta_{\mbf{n}} } (t) g \right\|_{H^\sigma(\Omega)}  \le \dfrac{\ms{M}_2}{\vartheta_{\mbf{n}} } \left\| g \right\|_{H^\sigma(\Omega)} , \quad \left\| {\mc{B}}(t) g \right\|_{H^\sigma(\Omega)}  \le \ms{M}_2 \left\| g \right\|_{H^\sigma(\Omega)}, \quad \text{for } 0 \le t \le \mc{T},
\end{align}  
	and
\begin{align}    \label{est2}
	\left\| \widehat {\mc{D}}_{\vartheta_{\mbf{n}} } (t,s) g \right\|_{H^\sigma(\Omega)} \le  \dfrac{\ms{M}^2_2 \mc{T}^\al}{\ms{M}_1 s^\al}   \left\| g  \right\|_{H^\sigma(\Omega)}, \quad \text{for } 0 \le t \le \mc{T},0 < s \le \mc{T}.
\end{align}  	
Under  assumptions (H1) and (H2), one can check that the integral equation \eqref{opereqre} has a unique solution $\widehat{\mf{u}}^{\gamma_{\mbf{n}},\vartheta_{\mbf{n}}  } \in \mb{S}_{\sigma,\mc{T}}$ using \eqref{est1}-\eqref{est2} and a similar method as in Lemma \ref{uniquelm}. Next, we will prove Theorem \ref{main2} using a similar technique as in Part A. In this way, we arrive at
\begin{align*}    
\mb{E} \left\| \widehat{\mf{u}}^{\gamma_{\mbf{n}},\vartheta_{\mbf{n}}  }(t,\cdot)  - v^{\vartheta_{\mbf{n}}  } (t,\cdot) \right\|_{H^\sigma(\Omega)}^2 \le 2 \dfrac{\ms{M}^2_2}{\vartheta^2_{\mbf{n}}} \mb{E}  \left\| \widehat{\varphi}^{\gamma_{\mbf{n}}} - {\varphi} \right\|^2_{H^\sigma(\Omega) } + \dfrac{\mc{Q}^2_{\alpha,\mc{T}} }{2} \left\| \widehat{\mf{u}}^{\gamma_{\mbf{n}},\vartheta_{\mbf{n}}  } - u  \right\|_{\mb{S}_{\sigma,\mc{T}}}^2,
\end{align*} 
and 
\begin{align*} 
\left\| v^{\vartheta_{\mbf{n}}}(t,\cdot) - u(t,\cdot) \right\|^2_{H^\sigma(\Omega)}  \le \dfrac{\vartheta_{\mbf{n}} \ms{M}_2^2}{4 \ms{M}_1} \left( 1+ \mc{T}^\alpha \right) \left\| u(0,\cdot) \right\|^2_{{H}^{\sigma+1}(\Omega)},
\end{align*} 
which gives us
\begin{align*}    
(1-\mc{Q}^2_{\alpha,\mc{T}}) \left\| \widehat{\mf{u}}^{\gamma_{\mbf{n}},\vartheta_{\mbf{n}}  } - u  \right\|_{\mb{S}_{\sigma,\mc{T}}}^2 \le 4 \dfrac{\ms{M}^2_2}{\vartheta^2_{\mbf{n}}} \mb{E}  \left\| \widehat{\varphi}^{\gamma_{\mbf{n}}} - {\varphi} \right\|^2_{H^\sigma(\Omega) } +  \dfrac{\vartheta_{\mbf{n}} \ms{M}_2^2}{2 \ms{M}_1} \left( 1+ \mc{T}^\alpha \right) \left\| u(0,\cdot) \right\|^2_{{H}^{\sigma+1}(\Omega)}.
\end{align*} 
Finally, using Part (b) of Theorem \ref{maintheomul}, we conclude that
\begin{align*}    
(1-\mc{Q}^2_{\alpha,\mc{T}}) \left\| \widehat{\mf{u}}^{\gamma_{\mbf{n}},\vartheta_{\mbf{n}}  } - u  \right\|_{\mb{S}_{\sigma,\mc{T}}}^2  &\le 4 \dfrac{\ms{M}^2_2}{\vartheta^2_{\mbf{n}}} \left[  {\overline{C} (\mu,\varphi)} \gamma_{\mbf{n}}^{\sigma+d/2} \prod_{i=1}^d n_i^{-4\mu_i} + 4 \gamma_{\mbf{n}}^{-\mu_\circ} \left\| \varphi \right\|^2_{H^{\sigma+\mu_0}(\Omega)} \right] \\
&+  \dfrac{\vartheta_{\mbf{n}} \ms{M}_2^2}{2 \ms{M}_1} \left( 1+ \mc{T}^\alpha \right) \left\| u(0,\cdot) \right\|^2_{{H}^{\sigma+1}(\Omega)},
\end{align*}  
which implies that $\left\| \widehat{\mf{u}}^{\gamma_{\mbf{n}},\vartheta_{\mbf{n}}  } - u  \right\|_{\mb{S}_{\sigma,\mc{T}}}^2$ is of order 
\begin{align*}    
\max \left\{ \dfrac{\gamma_{\mbf{n}}^{\sigma+d/2}}{\vartheta_{\mbf{n}}^2} \prod_{i=1}^d n_i^{-4\mu_i}, \dfrac{\gamma_{\mbf{n}}^{-\mu_\circ}}{\vartheta_{\mbf{n}}^2}, \vartheta_{\mbf{n}} \right\}.
\end{align*} 
\end{proof}  

 {
\begin{remark}
		The truncation method in this paper is similar to the method in \cite{Tuan5,Minh}. 
	The quasi-boundary value method in this section is more effective and useful than the one in \cite{Yang}.  The advantage of this method is that  it allows us to estimate  the norm on the Hilbert scales $H^\sigma(\Omega)$. As is known, estimates on higher Sobolev spaces such as  $H^\sigma(\Omega)$ is not an easy  task. 
\end{remark}
}

{\subsection{A general filter method in the multi-dimensional case}
Now, we introduce one more regularization method, called a general filter method. The main idea is to replace the quantity $\frac{E_{\al,1}(-\lambda_{\mbf{j}} t^\al)}{E_{\al,1}(-\lambda_{\mbf{j}} {\mc T}^\al)}$ by a new one
	$          
	\mc{L}_{\mbf j}(\vartheta_{\mbf n }) \frac{E_{\al,1}(-\lambda_{\mbf{j}} t^\al)}{E_{\al,1}(-\lambda_{\mbf{j}} {\mc T}^\al)},
	$ 
	with $\mc{L}_{\mbf j}(\vartheta_{\mbf n })$  chosen as in Theorem \ref{filter}. In this way, regularized solutions $\widetilde{w}^{\gamma_{\mbf{n}}, \vartheta_{\mbf n }}$ are obtained as follows
	\begin{align}    \widetilde{w}^{\gamma_{\mbf{n}}, \vartheta_{\mbf n }}(t,x)  = \widetilde {\mbf{A}}_{\vartheta_{\mbf{n}} }(t) \widehat{\varphi}^{\gamma_{\mbf{n}}} (x) + \int\limits_{ 0 }^{ t } \mc{B}&(t-s) f(s,x,\widetilde{w}^{\gamma_{\mbf{n}}, \vartheta_{\mbf n }}(s,x)) \d s \nn \\
	&-  \int\limits_{ 0 }^{ \mc{T} } \widetilde {\mbf{D}}_{\vartheta_{\mbf{n}} }(t,\mc{T}-s)  f(s,x,\widetilde{w}^{\gamma_{\mbf{n}}, \vartheta_{\mbf n }}(s,x)) \d s,
	\end{align}
	where $\gamma_{\mbf{n}}, \vartheta_{\mbf n}$ satisfy   $\lim_{|\mbf{n}| \to \infty} \gamma_{\mbf{n}}=\infty,  \lim_{|\mbf{n}| \to \infty} \vartheta_{\mbf{n}}=0$ and 
	\begin{align}   \label{defab}
	\widetilde {\mbf{A}}_{\vartheta_{\mbf{n}} }(t) g := \sum_{\mbf{j} \in \mb{N}^d}  \Bigg( \mc{L}_{\mbf j}(\vartheta_{\mbf n }) \dfrac{E_{\al,1} \big(- \lambda_{\mbf{j}} t^\al \big)}{ E_{\al,1} \big(- \lambda_{\mbf{j}} \mc{T}^\al \big)} \left\langle g, \xi_{\mbf{j}} \right\rangle   \Bigg) \xi_{\mbf{j}}, \qquad \widetilde{\mbf{D}}_{\vartheta_{\mbf{n}}}(t,s)g:=\widetilde{\mbf{A}}_{\vartheta_{\mbf{n}} }(t) \mc{B}(s)g.
	\end{align}
	\begin{theorem} [Error estimate obtained by general filter method] \label{filter}
		Let $\mc{L}_{\mbf j}(\vartheta_{\mbf n })$ satisfies the following conditions
		\begin{align}     \label{second}      
		\mc{L}_{\mbf j}(\vartheta_{\mbf n }) \frac{E_{\al,1}(-\lambda_{\mbf{j}} t^\al)}{E_{\al,1}(-\lambda_{\mbf{j}} {\mc T}^\al)} \le C_{\dagger}(\vartheta_{\mbf n }), \quad  0 \le 1- \mc{L}_{\mbf j}(\vartheta_{\mbf n }) \le C_{\ddagger}(\vartheta_{\mbf n }) \lambda_{\mbf{j}}^q, \quad \text{for some } q>0,
		\end{align} 
		where $ C_{\dagger}(\vartheta_{\mbf n }), C_{\ddagger}(\vartheta_{\mbf n })$ satisfy 
		\begin{align*}    
		\lim_{|\mbf{n}| \to \infty} C_{\ddagger}(\vartheta_{\mbf n }) = \lim_{|\mbf{n}| \to \infty}  C^2_{\dagger}(\vartheta_{\mbf n }) \gamma_{\mbf{n}}^{\sigma+d/2} \prod_{i=1}^d n_i^{-4\mu_i} = \lim_{|\mbf{n}| \to \infty} C^2_{\dagger}(\vartheta_{\mbf n }) \gamma_{\mbf{n}}^{-\mu_\circ}= 0.
		\end{align*}
		Assume that there exists a positive constant $\sigma >0$ such that  $u(0,\cdot) \in H^{\sigma+2q}(\Omega)$, $\varphi \in H^{\mu_\circ+\sigma}(\Omega)$ with $\mu_\circ$ is as in Theorem \ref{maintheomul} and  assumptions (H1),(H2) are satisfied. Then, $\left\| \widehat{\mf{u}}^{\gamma_{\mbf{n}},\vartheta_{\mbf{n}}  } - u  \right\|_{\mb{S}_{\sigma,\mc{T}}}^2$ is of order 
		\begin{align*}    
		\max \left\{ C^2_{\dagger}(\vartheta_{\mbf n }) \gamma_{\mbf{n}}^{\sigma+d/2} \prod_{i=1}^d n_i^{-4\mu_i}, C^2_{\dagger}(\vartheta_{\mbf n }) \gamma_{\mbf{n}}^{-\mu_\circ}, C^2_{\ddagger}(\vartheta_{\mbf n }) \right\}.
		\end{align*}   
	\end{theorem}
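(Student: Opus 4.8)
The plan is to mirror, almost verbatim, the two-step argument of Theorem \ref{main2}, replacing the quasi-boundary value operator $\widehat{\mc{A}}_{\vartheta_{\mbf n}}$ by the filtered operator $\widetilde{\mbf A}_{\vartheta_{\mbf n}}$ of \eqref{defab} and tracking how the two hypotheses in \eqref{second} enter the bounds. First I would record the operator estimates that drive everything. The first inequality in \eqref{second} gives at once
\begin{align*}
\left\| \widetilde{\mbf A}_{\vartheta_{\mbf n}}(t) g \right\|_{H^\sigma(\Omega)} \le C_{\dagger}(\vartheta_{\mbf n}) \left\| g \right\|_{H^\sigma(\Omega)}, \quad 0 \le t \le \mc{T},
\end{align*}
in place of the first bound in \eqref{est1}, while the inequality $0 \le 1 - \mc{L}_{\mbf j}(\vartheta_{\mbf n})$ shows $0 \le \mc{L}_{\mbf j}(\vartheta_{\mbf n}) \le 1$, so that $\widetilde{\mbf D}_{\vartheta_{\mbf n}}$ and $\mc{B}$ still obey exactly the bounds \eqref{est1}--\eqref{est2}. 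Since the contraction constant in the fixed-point argument depends only on the (unchanged) bounds for $\mc{B}$ and $\widetilde{\mbf D}_{\vartheta_{\mbf n}}$, the existence and uniqueness of $\widetilde{w}^{\gamma_{\mbf n}, \vartheta_{\mbf n}} \in \mb{S}_{\sigma,\mc T}$ follow from (H1)--(H2) by repeating Lemma \ref{uniquelm}.

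Next I would introduce the intermediate process
\begin{align*}
v^{\vartheta_{\mbf n}}(t,x) = \widetilde{\mbf A}_{\vartheta_{\mbf n}}(t)\varphi(x) + \int\limits_0^t \mc{B}(t-s) f(s,x,u(s,x)) \d s - \int\limits_0^{\mc T} \widetilde{\mbf D}_{\vartheta_{\mbf n}}(t,\mc T - s) f(s,x,u(s,x)) \d s,
\end{align*}
and split $\big\| \widetilde{w}^{\gamma_{\mbf n}, \vartheta_{\mbf n}} - u \big\|$ by the triangle inequality. \textbf{Step 1} (the piece $\widetilde{w}^{\gamma_{\mbf n}, \vartheta_{\mbf n}} - v^{\vartheta_{\mbf n}}$) is handled exactly as in Theorem \ref{main2}: the data term is $\widetilde{\mbf A}_{\vartheta_{\mbf n}}(t)(\widehat{\varphi}^{\gamma_{\mbf n}} - \varphi)$, whose second moment is now controlled by $C^2_{\dagger}(\vartheta_{\mbf n})\, \mb{E}\big\|\widehat{\varphi}^{\gamma_{\mbf n}} - \varphi\big\|_{H^\sigma(\Omega)}^2$, while the two source terms reproduce the factor $\tfrac12 \mc{Q}^2_{\al,\mc T}\big\| \widetilde{w}^{\gamma_{\mbf n}, \vartheta_{\mbf n}} - u \big\|_{\mb{S}_{\sigma,\mc T}}^2$ through (H1) and the bounds on $\mc{B}$, $\widetilde{\mbf D}_{\vartheta_{\mbf n}}$.

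The heart of the matter is \textbf{Step 2}, the approximation error $\big\| v^{\vartheta_{\mbf n}} - u \big\|_{H^\sigma(\Omega)}$, which is the only genuinely new computation. Expanding in the eigenbasis and using the identity $u_{\mbf j}(0) = \big(\varphi_{\mbf j} - \int_0^{\mc T} E_{\al,1}(-\lambda_{\mbf j}(\mc T - s)^\al) f_{\mbf j}(u)(s)\,\d s\big) \big/ E_{\al,1}(-\lambda_{\mbf j}\mc T^\al)$, the Fourier coefficients collapse to
\begin{align*}
\big( v^{\vartheta_{\mbf n}} - u \big)_{\mbf j}(t) = \big( \mc{L}_{\mbf j}(\vartheta_{\mbf n}) - 1 \big)\, E_{\al,1}(-\lambda_{\mbf j} t^\al)\, u_{\mbf j}(0).
\end{align*}
Then the second condition in \eqref{second}, $0 \le 1 - \mc{L}_{\mbf j}(\vartheta_{\mbf n}) \le C_{\ddagger}(\vartheta_{\mbf n})\lambda_{\mbf j}^q$, together with $E_{\al,1}(-\lambda_{\mbf j} t^\al) \le \ms{M}_2$, yields
\begin{align*}
\big\| v^{\vartheta_{\mbf n}}(t,\cdot) - u(t,\cdot) \big\|_{H^\sigma(\Omega)}^2 \le \ms{M}_2^2\, C^2_{\ddagger}(\vartheta_{\mbf n}) \sum_{\mbf j \in \mb N^d} \lambda_{\mbf j}^{\sigma + 2q}\, u_{\mbf j}^2(0) = \ms{M}_2^2\, C^2_{\ddagger}(\vartheta_{\mbf n}) \big\| u(0,\cdot) \big\|_{H^{\sigma + 2q}(\Omega)}^2,
\end{align*}
which is precisely where the regularity assumption $u(0,\cdot) \in H^{\sigma + 2q}(\Omega)$ is consumed. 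Finally I would combine the two steps, take the supremum over $t \in [0,\mc T]$, absorb the $\big\| \widetilde{w}^{\gamma_{\mbf n}, \vartheta_{\mbf n}} - u \big\|_{\mb{S}_{\sigma,\mc T}}^2$ term on the left via (H2), and insert Part (b) of Theorem \ref{maintheomul} for $\mb{E}\big\|\widehat{\varphi}^{\gamma_{\mbf n}} - \varphi\big\|_{H^\sigma(\Omega)}^2$; this delivers the stated order $\max\big\{ C^2_{\dagger}(\vartheta_{\mbf n}) \gamma_{\mbf n}^{\sigma + d/2} \prod_{i} n_i^{-4\mu_i},\ C^2_{\dagger}(\vartheta_{\mbf n}) \gamma_{\mbf n}^{-\mu_\circ},\ C^2_{\ddagger}(\vartheta_{\mbf n}) \big\}$. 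The main (though modest) obstacle is the coefficient collapse in Step 2 and the accompanying bookkeeping of the Hilbert-scale exponent $\sigma + 2q$; everything else is a transcription of Theorem \ref{main2} with $C_{\dagger}$ and $C_{\ddagger}$ playing the roles of $\ms{M}_2/\vartheta_{\mbf n}$ and $\sqrt{\vartheta_{\mbf n}}$.
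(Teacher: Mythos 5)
Your proposal is correct and follows essentially the same route as the paper's proof: the same intermediate process $\widetilde{v}^{\vartheta_{\mbf{n}}}$, the same two-step splitting with the data term controlled by $C_{\dagger}^2(\vartheta_{\mbf{n}})\,\mb{E}\|\widehat{\varphi}^{\gamma_{\mbf{n}}}-\varphi\|^2_{H^\sigma(\Omega)}$, and the same coefficient collapse $\big(v^{\vartheta_{\mbf{n}}}-u\big)_{\mbf{j}}(t)=\big(\mc{L}_{\mbf{j}}(\vartheta_{\mbf{n}})-1\big)E_{\al,1}(-\lambda_{\mbf{j}}t^\al)u_{\mbf{j}}(0)$ yielding the $\ms{M}_2^2 C_{\ddagger}^2(\vartheta_{\mbf{n}})\|u(0,\cdot)\|^2_{H^{\sigma+2q}(\Omega)}$ bound. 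Your version is in fact marginally tidier in Step 2, where you carry the $H^\sigma$ norm throughout rather than stating the $L^2$ estimate first as the paper does.
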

	\begin{proof}
		Let 
		\begin{align}  \label{wre} \widetilde{v}^{\vartheta_{\mbf{n}}  } (t,x) = \widetilde {\mbf{A}}_{\vartheta_{\mbf{n}} }(t) {\varphi} (x) + \int\limits_{ 0 }^{ t } \mc{B}&(t-s) f(s,x,u(s,x)) \d s -  \int\limits_{ 0 }^{ \mc{T} } \widetilde {\mbf {D}}_{\vartheta_{\mbf{n}} }(t,\mc{T}-s)  f(s,x,u(s,x)) \d s.
		\end{align}
		From the definition \eqref{defab},
		the following estimates hold for $0 \le t \le \mc{T}$, $0 < s \le \mc T$  
		\begin{align*}  
		\left\| \widetilde {\mbf{A}}_{\vartheta_{\mbf{n}} } (t) g \right\|_{H^\sigma(\Omega)}  \le C_{\dagger}(\vartheta_{\mbf n }) \left\| g \right\|_{H^\sigma(\Omega)} , \quad  \left\| \widetilde {\mbf{D}}_{\vartheta_{\mbf{n}} } (t,s) g \right\|_{H^\sigma(\Omega)} \le  \dfrac{\ms{M}^2_2 \mc{T}^\al}{\ms{M}_1 s^\al}  \left\| g  \right\|_{H^\sigma(\Omega)}, \quad g\in H^\sigma(\Omega).
		\end{align*}
		By similar techniques as  in the proof of Theorem \ref{main2} and noting that $\left[ 1- \mc{L}_{\mbf j}(\vartheta_{\mbf n }) \right] E_{\al,1} (-\lambda_{\mbf{j} } t^\al) \le C_{\ddagger}(\vartheta_{\mbf n }) \ms{M}_2 \lambda_{\mbf{j}}^q$, one can check that
		\begin{align} \label{cb11}
		\mb{E} \left\| \widetilde{w}^{\gamma_{\mbf{n}},\vartheta_{\mbf{n}}  }(t,\cdot)  - \widetilde{v}^{\vartheta_{\mbf{n}}  } (t,\cdot) \right\|_{H^\sigma(\Omega)}^2 
		&\le 2 C^2_{\dagger}(\vartheta_{\mbf n }) \mb{E}  \left\| \widehat{\varphi}^{\gamma_{\mbf{n}}} - {\varphi} \right\|^2_{H^\sigma(\Omega) } + \dfrac{\mc{Q}^2_{\alpha,\mc{T}} }{2} \left\| \widehat{\mf{u}}^{\gamma_{\mbf{n}},\vartheta_{\mbf{n}}  } - u  \right\|_{\mb{S}_{\sigma,\mc{T}}}^2,
		\end{align}  
		and that
		\begin{align}  \label{cb22}
		\left\| \widetilde{v}^{\vartheta_{\mbf{n}}}(t,\cdot) - u(t,\cdot) \right\|^2_{{L}^2(\Omega)}  =  \sum_{\mbf{j} \in \mb{N}^d} \left[ 1 - \mc{L}_{\mbf j}(\vartheta_{\mbf n}) \right]^2 E^2_{\al,1} (-\lambda_{\mbf{j} } t^\al)  u^2_{\mbf{j}} (0)   \le \ms{M}_2^2 C^2_{\ddagger}(\vartheta_{\mbf n })  \left\| u(0,\cdot) \right\|^2_{{H}^{2q}(\Omega)},
		\end{align} 
		where $\mc{Q}_{\alpha,\mc{T}}$ is defined as in \eqref{ask}. From \eqref{cb11}-\eqref{cb22}, one arrives at
		\begin{align*}    
		(1-\mc{Q}^2_{\alpha,\mc{T}}) \left\| \widehat{\mf{u}}^{\gamma_{\mbf{n}},\vartheta_{\mbf{n}}  } - u  \right\|_{\mb{S}_{\sigma,\mc{T}}}^2  &\le 4 C^2_{\dagger}(\vartheta_{\mbf n }) \left[  {\overline{C} (\mu,\varphi)} \gamma_{\mbf{n}}^{\sigma+d/2} \prod_{i=1}^d n_i^{-4\mu_i} + 4 \gamma_{\mbf{n}}^{-\mu_\circ} \left\| \varphi \right\|^2_{H^{\sigma+\mu_0}(\Omega)} \right] \\
		&+ \ms{M}_2^2 C^2_{\ddagger}(\vartheta_{\mbf n }) \left\| u(0,\cdot) \right\|^2_{{H}^{\sigma+2q}(\Omega)},
		\end{align*}
		which implies that $\left\| \widehat{\mf{u}}^{\gamma_{\mbf{n}},\vartheta_{\mbf{n}}  } - u  \right\|_{\mb{S}_{\sigma,\mc{T}}}^2$ is of order 
		\begin{align*}    
		\max \left\{ C^2_{\dagger}(\vartheta_{\mbf n }) \gamma_{\mbf{n}}^{\sigma+d/2} \prod_{i=1}^d n_i^{-4\mu_i}, C^2_{\dagger}(\vartheta_{\mbf n }) \gamma_{\mbf{n}}^{-\mu_\circ}, C^2_{\ddagger}(\vartheta_{\mbf n }) \right\}.
		\end{align*}  
This completes the proof.		 
	\end{proof}
}

\begin{remark}
	Our problem is restricted to a rectangular geometry for which the eigenvalues and eigenfunctions of the Laplacian are readily available. The analysis here comes from the trigonemetric functions (sine, the cosine function) of eigenfunctions. Lemma 3.1 give the  representations of the exact solution which is given by trigonemetric functions. However, 
	if we let an arbitrary domain $\Omega$ with a $C^{2}$-boundary, the analysis in this paper is not applied and such problem is more difficult. This challenge and open  problem may  be addressed in future works. 
\end{remark}


\section{Numerical example}
In this section, we describe the Fourier truncated method  applied to some examples of finding the function $u=u(t,x)$ satisfying the following conditions
\begin{align}  
&\dfrac{\partial}{\partial t} u(t,x) - \dfrac{\partial^{1-\al}}{\partial t}  \Delta  u(t,x) = f (t,x,u(t,x)), \quad (t,x) \in (0,1) \times \Omega_d,\label{vd1}\\
&u(t,x) = 0, \quad (t,x) \in (0,1) \times \partial\Omega_d,\label{vd2}\\
&u(\mc{T},x) = \varphi(x), \quad x \in \Omega_d, \label{vd3}
\end{align} 
where $\al \in (0,1)$, $t \in (0,1)$ is time variable, $x \in \Omega_d = (0, \pi)^d$ and $x=(x_1,x_2,\dots,x_d)$ is $d$-dimensional variable. 
{\color{black}
	
	
	The discrete form of the problem \eqref{vd1}-\eqref{vd3} is as follows: We divide the domain $(0, \mathcal{T}) \times \Omega_d $ into $N_d$ and $N_t$ subintervals of
	equal length $h_d$ and $h_t$, where $h_d = \dfrac{\pi}{N_d}$ and $h_t = \dfrac{1}{N_t}$, respectively, where $N_d$ is chosen satisfies the random model as follow:
	
	The data $\varphi$ is measured at $n_1 \times n_2 \times \dots \times n_d$ grid points $x_{\mbf{k}} = x_{k_1,k_2,\dots,k_d} \in \Omega$, $\mbf{k}=(k_1,k_2,\dots,k_d) \in \mb{N}^d$,  as follows
	\begin{align*}    
	x_{\mbf{k}} = (\mc{X}_{k_1},\mc{X}_{k_2}, \dots, \mc{X}_{k_d}) = \left(  \dfrac{2 k_1 -1}{2 n_1} \pi,  \dfrac{2 k_2 -1}{2 n_2} \pi , \dots, \dfrac{2 k_d -1}{2 n_d} \pi \right),
	\end{align*} 
	where $k_i = 1,2,\dots,n_i$, $i = 1,2,\dots,d$. Furthermore, the value of $\varphi$ at each  point $x_{\mbf{k} }$ is contaminated by the observation $\Phi^{obs}_{\mbf{k} }$ 
	\begin{align*}    
	\varphi(x_{\mbf{k}})=\varphi\left( \mc{X}_{k_1},\mc{X}_{k_2},\dots,\mc{X}_{k_d} \right) ~ \approx ~  \Phi^{obs}_{k_1,k_2,\dots,k_d} = \Phi^{obs}_{\mbf{k}}.
	\end{align*}  
	The relationship between two kinds of data is described by the random model
	\begin{align}    \label{model}    \Phi^{obs}_{\mbf{k}} = \varphi(x_{\mbf{k}}) + \varepsilon_{\mbf{k}} \ms{W}_{\mbf{k}},
	\end{align}   
	where $\ms{W}_{\mbf{k}}=\ms{W}_{k_1,k_2,\dots,k_d}$ are mutually independent random variables, $\ms{W}_{\mbf{k}}\sim \mc{N}(0,1)$ and $\varepsilon_{\mbf{k}}=\varepsilon_{k_1,k_2,\dots,k_d}$ are positive constants bounded by a positive constant $\varepsilon_{\max}$.
	
	The function\, {\ttfamily randn} may be used to generate a random number drawn from the $N(0, 1)$ distribution in Matlab software. In order to simulate a state of randomness, the command \, {\ttfamily 	randn(’state’,n)} is used. As an  example, one could use \, {\tt randn(8)} to generate a fixed set of random numbers then we get a matrix \, {\tt 8} $\times$ {\tt 8} with the average of the elements is zero (see Table \ref{T1}).
	\begin{table}[h!]
		\begin{center}	
			{\tt	\begin{tabular}{|rrrrrrrr|}	
					\hline
					-2.2207  &  0.7366  &  1.0446 &   1.4055  &  0.7041 &   0.9410   & 0.2901   & 2.1140 \\
					-0.2391  &  0.9553  & -0.8073  &  1.5757  & -2.3110 &   0.4566   & 0.1594  & -0.0590 \\
					0.0687  &  1.9295   & 0.2059   &-1.1114   & 1.8256  &  0.3717   &-1.1562   & 1.9949 \\
					-2.0202  & -0.7453  & -0.9646  & -2.1935  &  0.4909 &  -0.2571   & 0.3421  &  0.3080 \\
					-0.3641  & -0.8984  & -1.5254  &  0.1623  & -0.0380 &  -0.0990   & 0.1795  & -0.1571 \\
					-0.0813  & -3.2625  &  0.0904  & -0.7056  &  0.5892 &   1.3230  &  0.4859  &  0.7204 \\
					-1.9797  & -0.0300  & -0.4829  &  0.3841  &  0.6980 &   1.9087  & -1.4602   &-0.3344 \\
					0.7882   & 0.6134   & 1.2883   &-0.4194   & -0.3295 &   0.4929  &  0.2335 &  -0.4638	\\			
					\hline
			\end{tabular}}
		\end{center}
		\caption{An example of the function {\tt randn(8)}}	
		\label{T1}
	\end{table}

	In the following, we discuss two examples to illustrate of our results.
	
	\subsection{Case 1: $d = 1, \alpha = 0.3$}
	In first case, the source function $f$ and the data $\varphi$ are chosen as
	\begin{align}
	f := -\sin(x)\left[\dfrac{10t^{0.3}}{3\Gamma(0.3)} + 1\right],\quad  \varphi = \sin(x),\quad  \Phi^{obs} = \varphi(x) + 1\% \ms{W},
	\label{c1D}
	\end{align}
	
	so that the exact solution of the problem \eqref{vd1}-\eqref{vd3} is given by
	$u(t,x) = t\sin(x)$.
	
	The eigenvalues $\left\{ \lambda_{j_1} \right\} $ and the eigenvectors $\left\{ \xi_{j_1} \right\} $ are given by
	$$\lambda_{j_1}  = {j_1}^2,\; \xi_{j_1} = \sqrt{\dfrac{2}{\pi}} \sin({j_1}x),\; \text{ for } {j_1} = 1,2... .$$
	
	According to \eqref{unnre}, we have the regularized solution as follows
	\begin{align}  \label{unnre}  
	\widetilde {\mf{u}}^{N_1}(t,x) &= \mb{A}^{N_1}(t) \varphi^{N_1}(x) + \int\limits_{ 0 }^{ t } \mb{B}^{N_1}(t-s) f(s,x) \d s 
	-  \int\limits_{ 0 }^{ \mc{T} } \mb{D}^{N_1}(t,\mc{T}-s)  f(s,x) \d s,
	\end{align}
	where
	\begin{align*}    
	\mb{A}^{N_1}(t) g &:= \sum_{ j_1=1 }^{ N_1 }\Bigg( \dfrac{E_{\al,1} \big(- \lambda_{j_1} t^\al \big)}{E_{\al,1} \big(- \lambda_{j_1} \mathcal{T}^\al \big)} \left\langle g, \xi_{j_1} \right\rangle  \Bigg) \xi_{j_1}, \\
	\mb{B}^{N_1}(t) g &:= \sum_{ j_1=1 }^{ N_1 } \Big(  E_{\al,1} \big( -\lambda_{j_1} t^\al \big) \left\langle g, \xi_{j_1} \right\rangle \Big) \xi_{j_1},
	\end{align*} 
	and $\mb{D}^{N_1}(t,s)g:=\mb{A}^{N_1}(t) \mb{B}^{N_1}(s)g$.   
	
	Before presenting the results of this subsection, we present an approximate methods to support the calculation as follows
	
	In numerical analysis, Simpson's rule is a method for numerical integration. Let $\theta \in L^2(0,\pi)$, we have the following approximation
	\begin{align*}
	\displaystyle\int_{0}^{\pi} \theta(z) \mathrm{d}z \approx \Delta z \left( \dfrac{1}{3}\theta(z_1) + \dfrac{2}{3}\sum_{l=1}^{(N_z+1)/2-1} \theta(z_{2l}) + \dfrac{4}{3}\sum_{l=1}^{(N_z+1)/2}\theta(z_{2l-1}) + \dfrac{1}{3}\theta(z_{N_z+1})\right).
	\end{align*}
	Then the errors are esimated by
	\begin{align*}
	\mathrm{Err}^{N_1}_{n_1} (t)= \sqrt{\frac{1}{n_1} \sum_{i=1}^{n_1} \big[\widetilde u^{N_1}(t,x_i) -u(t,x_i) \big]^2},
	\end{align*}
	where we choose $N_1$ equal to greatest natural numbers less than ${\log n_1}$.
	Figure \ref{1D1} (a) and Figure \ref{1D1100} (a) show the exact and regularized solutions of 
	the problem \eqref{vd1}-\eqref{vd3} with conditions \eqref{c1D} at $t=0.3$, $n_1 = 50$ and $n_1 = 100$, respectively. In addition, the error between the exact and regularized solutions is shown in Figure \ref{1D1} (b) and Figure \ref{1D1100} (b). Moreover, we also present the solutions on $(t,x) \in (0,1) \times (0,\pi)$ in Figure \ref{1D2} (for $n_1 =50$) and Figure \ref{1D2100} (for $n_1 =100$).
	\subsection{Case 2: $d = 2, \alpha = 0.5$}
	In second case, the model concerned subjects to the following source function and final data
	\begin{align}
	f = -2 \sin(x_1)\sin(x_2)\left[1+ \dfrac{2t^{1/2}}{\Gamma(0.5)} \right], \quad  \varphi(x_{1,2}) = \sin(x_1)\sin(x_2), \quad \Phi^{obs}_{1,2} = \varphi(x_{1,2}) + 1.5\% \ms{W}_{1,2},
	\label{c2D}
	\end{align}
	In order to obtain the solution $u(t,x) = t\sin(x_1)\sin(x_2)$ of our problem in this case, we employ the conditions is give by Eq. \eqref{c2D}.
	
	The eigenvalues $\left\{ \lambda_{j_1,j_2} \right\} $ and the eigenvectors $\left\{ \xi_{j_1,j_2} \right\} $ are given by
	$$\lambda_{j_1,j_2}  = {j_1}^2+{j_2}^2,\; \xi_{j_1,j_2} = {\dfrac{2}{\pi}} \sin({j_1}x_1)\sin({j_2}x_2),\; \text{ for } {j_1},j_2 = 1,2... .$$
	
	According to \eqref{unnre}, we have the regularized solution as follows
	\begin{align}  \label{unnre}  
	\widetilde {\mf{u}}^{N_1,N_2}(t,x) &= \mb{A}^{N_1,N_2}(t) \varphi^{N_1,N_2}(x) + \int\limits_{ 0 }^{ t } \mb{B}^{N_1,N_2}(t-s) f(s,x) \d s 
	-  \int\limits_{ 0 }^{ \mc{T} } \mb{D}^{N_1,N_2}(t,\mc{T}-s)  f(s,x) \d s,
	\end{align}
	where
	
	\begin{align*}    
	\mb{A}^{N_1,N_2}(t) g &:= \sum_{ j_1=1 }^{ N_1 } \sum_{ j_2=1 }^{ N_2 } \Bigg( \dfrac{E_{\al,1} \big(- \lambda_{j_1,j_2} t^\al \big)}{E_{\al,1} \big(- \lambda_{j_1,j_2} \mathcal{T}^\al \big)} \left\langle g, \xi_{j_1,j_2} \right\rangle  \Bigg) \xi_{j_1,j_2}, \\
	\mb{B}^{N_1,N_2}(t) g &:= \sum_{ j_1=1 }^{ N_1 } \sum_{ j_2=1 }^{ N_2 } \Big(  E_{\al,1} \big( -\lambda_{j_1,j_2} t^\al \big) \left\langle g, \xi_{j_1,j_2} \right\rangle \Big) \xi_{j_1,j_2},
	\end{align*} 
	and $\mb{D}^{N_1,N_2}(t,s)g:=\mb{A}^{N_1,N_2}(t) \mb{B}^{N_1,N_2}(s)g$.  
	
	Then we have the errors are esimated by
	\begin{align*}
\mathrm{Err}(t):=	\mathrm{Err}_{n_1,n_2}^{N_1,N_2} (t)= \sqrt{\frac{1}{n_1 n_2} \sum_{j_1=1}^{n_1} \sum_{j_2=1}^{n_2} \big[\widetilde u^{N_1,N_2}(x_{j_1},x_{j_2},t) -u(x_{j_1},x_{j_2},t) \big]^2},
	\end{align*}
	where we choose $N_1$ and $N_2$ equal to greatest natural numbers less than ${\log n_1}$ and ${\log n_2}$, respectively.
	
{\color{red}	In this case, we show the results about the regularized solution (see Figure \ref{2D1}-b) at $t=0.3, n_1 = n_2 =50$. We can compare the exact (see Figure \ref{2D1}-a) and regularized solutions thanks to the error of these solutions by the contour graph (see Figure \ref{2D1}-c).} 
	%
	\begin{table}[h!]\label{Txyz}
		\begin{tabular}{c|c|c|c|c}
			\hline\hline
			\multirow{2}{*}{$\mathrm{Err}(t)$} & \multicolumn{2}{c|}{1D ($n_1 =50$)}                & \multicolumn{2}{c}{2D ($n_1 = n_2 = 50$)}                \\ \cline{2-5} 
			& Numerical method  & Theoretical method & Numerical method  & Theoretical method \\ \hline
			$\mathrm{Err}(0.3)$                & 0.011025586961961 &             0.010118040520960       & 0.024935435226306 &          0.054647006945596          \\ \hline
			$\mathrm{Err}(0.5)$                & 0.010580529848833 &             0.020919970167952       & 0.029741170367171 &     0.030198892384940               \\ \hline
			$\mathrm{Err}(0.8)$                & 0.009010268605417  &       0.010238297159259            & 0.040756196453124 &         0.031748168670840           \\ \hline\hline
		\end{tabular}
		\caption {The error between the exact and regularized solutions at $t \in \{ 0.3, 0.5, 0.8\}$}
	\end{table}
	In Table 2, we show the comparison of errors between theoretical method and 
	numerical method for the cases 1D and 2D with $t \in \{ 0.3, 0.5, 0.8\}$. 
	From this table, it shows that errors by the numerical method give better results than errors by the theoretical method. 
	From the aforementioned evidence, we can conclude that the method that we propose is acceptable.

\section*{Acknowledgements}
The first author gratefully acknowledge stimulating discussions with Dr Yavar Kian. The authors would like to thank the reviewers and editor for their constructive comments and
valuable suggestions that improve the quality of our paper. This research was supported by Vietnam National Foundation for Science and Technology Development (NAFOSTED) under grant number 101.02-2019.09.

\end{document}